\newtheorem{theo}{Theorem}[section]
\newtheorem{lemm}[theo]{Lemma}
\newtheorem{defi}[theo]{Definition}
\newtheorem{coro}[theo]{Corollary}
\newtheorem{prop}[theo]{Proposition}
\newtheorem{rema}[theo]{Remark}
\numberwithin{equation}{section}
\def\Re{\mathrm{Re}\,}
\def\Im{\mathrm{Im}\,}
\def\Rnum{\mathbb{R}}
\def\Cnum{\mathbb{C}}
\def\Nnum{\mathbb{N}}
\def\d{\mathrm{d}}
\def\sgn{\mathrm{sgn}}
\def\supp{\mathrm{supp}}
\def\C{C}
\def\c{\tilde C}
\def\cc{\hat C}
\def\Ref#1{Ref.~\cite{#1}}
\def\thmname#1{$\it (${\bf #1}$\it )$}
\begin{document}

\allowdisplaybreaks[3]

\title{Local well-posedness and blow-up for\\ a family of $U(1)$-invariant peakon equations}

\author{
Stephen Anco$^1$,
Huijun He$^{1}$,
Zhijun  Qiao$^2$\\\\
$^1$ Department of Mathematics and Statistics,\\
Brock University,  St. Catharines, Ontario, L2S 3A1, Canada\\
$^2$ School of Mathematical and  Statistical Sciences,\\
 University of Texas at Rio Grande Valley (UTRGV),
Edinburg, TX, 78539, USA
}

\date{}

\begin{abstract}
The Cauchy problem for a unified family of integrable $U(1)$-invariant peakon equations from the NLS hierarchy is studied.
As main results,
local well-posedness is proved in Besov spaces,
and blow-up is established through use of an $L^1$ conservation law.
\\

\noindent
2000 Mathematics Subject Classification: 35G25, 35L05, 35Q53

\noindent
\textit{Keywords}:
$U(1)$-invariant,
peakon,
NLS hierarchy,
modified-CH,
Cauchy problem,
local well-posedness,
$L^1$ conservation,
blow-up.
\end{abstract}

\maketitle{}

\begin{center}
emails: sanco@brocku.ca, hehuijun@mail2.sysu.edu.cn, zhijun.qiao@utrgv.edu
\end{center}

\section{Introduction}

In the past thirty years, the remarkable Camassa-Holm (CH) equation \cite{CamHol}
 \begin{equation}\label{CHeqn}
m_t+um_x+2u_xm=0,\quad m=u-u_{xx}
\end{equation}
has attracted much attention in the literature on nonlinear systems.
The CH equation can be derived from the Hamiltonian structure of Euler's equations
through an approximation modelling the shallow water scenario.
In particular, it describes the propagation of water waves over a flat bottom in shallow water \cite{CamHol,Joh2002}.
It was also implied as a very special case
in the work of Fuchssteiner and Fokas \cite{FuchFok} on hereditary symmetries.

The CH equation is a completely integrable system possessing
a bi-Hamiltonian structure with an infinite number of conservation laws \cite{CamHol,FuchFok},
and is able to be solved by the inverse scattering method
\cite{ConGerIva2006,ConEsc1998c,Len,ConMcK,BouKosSheTes,Eck}.
One of the main features of interest in the CH equation is
the following peaked wave solution
\begin{equation}
u(x,t)=c\, e^{-|x-ct|},
\quad
c\in \Rnum,
\end{equation}
called a peakon.
These waves are weak solutions which are orbitally stable in the energy space \cite{ConStr,DikMol}.
The CH equation also possesses multi-peakon weak solutions given by a linear superposition of peaked waves having time-dependent amplitudes and speeds,
where the individual peakons retain their shape after interactions.
Another main feature of the CH equation is the phenomena of blow-up
\cite{Con2000a,Con2000b,ConEsc1998b,ConEsc2000,ConEsc1998a,LiOlv}
in which certain initial data for strong solutions evolves such that
the slope of the wave becomes unbounded in a finite time while the wave profile remains bounded.
There are also global weak solutions that persist after the blow up \cite{CamHol,XinZha}.

In addition, the CH equation has several interesting geometrical aspects.
It describes geodesic flows
in the volume-preserving diffeomorphism group of the line (or circle) \cite{Kou,ConKol2003,XinZha,Kol},
as well as non-stretching curve flows in centro-equiaffine planar geometry \cite{QuHanKan}.
Moreover, it possesses algebro-geometric solutions on a symplectic submanifold \cite{Qia2003}.

The CH equation has a close relationship with the Korteweg-de Vries (KdV) equation
$u_t + uu_x + u_{xxx}=0$
which itself is a well-known integrable equation.
Both of these equations have a quadratic nonlinearity and have one Hamiltonian structure in common \cite{AncRec2019,AncChaSzm}.
More importantly, the CH equation is a negative flow in the AKNS hierarchy of integrable equations containing the KdV equation,
and they are connected to each other by a reciprocal transformation.
An interesting related feature is that the bi-Hamiltonian structures of the CH equation and the KdV equation are connected by a tri-Hamiltonian splitting method \cite{OlvRos}.

There are three similarly related pairs of integrable equations that have a cubic nonlinearity.
The best known pair is the modified KdV equation (mKdV)
$u_t + u^2u_x + u_{xxx}=0$ and its peakon counterpart
\begin{equation}\label{mCHeqn}
m_t+ ( m(u^2-u^2_x) ){}_x=0, \quad  m=u-u_{xx},
\end{equation}
called the Fokas-Olver-Rosenau-Qiao (FORQ) equation
which was derived independently in Ref.~\cite{Fok1995,Fuch1996,OlvRos,Qia2006}.
This pair has the same remarkable features as the KdV equation and the CH equation.
For this reason, the FORQ equation is also known as the modified CH equation (mCH).
Its peakon solutions are given by
\begin{equation}
u(x,t)=\pm \sqrt{\tfrac{3}{2}c}\, e^{-|x-ct|},
\quad
c\in \Rnum^+,
\end{equation}
which are unidirectional (in contrast to the CH peakon solutions).
A large amount of work has been done on the mCH equation,
studying the Cauchy problem,
the formation of singularities, wave-breaking mechanism, and peakon stability
(see, e.g., \cite{GuiLiuOlvQu,HimMan} and references therein).

The other two pairs of integrable equations are $U(1)$-invariant extensions of the mKdV--mCH pair
which have been derived recently \cite{AncMob} by the tri-Hamiltonian splitting method.

One pair is given by the complex mKdV equation
$u_t + |u|^2u_x + u_{xxx}=0$
which is also known as the Hirota equation,
and the Hirota-type peakon equation \cite{XiaQia, AncMob}
\begin{align}\label{HPeqn}
m_t+( (|u|^2-|u_x|^2) m){}_x+(\bar u u_x-u \bar u _x)m=0,
\quad
m=u-u_{xx} .
\end{align}
Another pair consists of the NLS equation
$i u_t + |u|^2 u + u_{xx}=0$
and its peakon counterpart \cite{AncMob}
\begin{align}\label{NLSPeqn}
i\,m_t+( (\bar u u_x-u \bar u _x)m ){}_x+(|u|^2-|u_x|^2)m=0,
\quad
m=u-u_{xx} .
\end{align}
Both this NLS-type peakon equation \eqref{NLSPeqn} and the Hirota-type peakon equation \eqref{HPeqn}
are negative flows in the AKNS hierarchy of integrable equations containing the NLS and Hirota equations \cite{AncMob}.
They are also a special case of 2-component peakon systems studied in recent work \cite{XiaQiaZho,XiaQia}
(when the components therein are complexified and $U(1)$-invariance is imposed,
combined with $t\to i t$ in the case of the NLS-type peakon equation).
The real reduction of the Hirota-type peakon equation is given by the FORQ/mCH equation.
In contrast, the NLS-type peakon equation has no real reduction.

Very recently, the NLS-type and Hirota-type peakon equations have been unified into
the following one-parameter family of integrable $U(1)$-invariant peakon equations \cite{AncChaSzm}
\begin{align}\label{unifiedfamilyeqn}
m_t+ (\Re(e ^{i\theta} (u+u_x)(\bar u-\bar u_x)) m ){}_x -i\, \Im(e^{i\theta}(u+u_x)(\bar u-\bar u_x)) \,m=0,
\quad m=u-u_{xx}
\end{align}
where $\theta$ $\in [0,\pi)$ is the parameter.
Note that $\theta=0$ yields the Hirota-type peakon equation \eqref{HPeqn},
and that $\theta=\pi/2$ yields the NLS-type peakon equation \eqref{NLSPeqn}.
A Lax pair and bi-Hamiltonian structure for the family is presented in Ref.~\cite{AncChaSzm}.

A straightforward computation using the method of Ref.~\cite{AncMob}
shows that the peakon weak solutions of the peakon equation family \eqref{unifiedfamilyeqn}
have the form of oscillatory peaked waves
\begin{equation}\label{unifiedfamily-peakon}
u = a e^{i \phi} e^{i \omega t -|x-ct|}
\end{equation}
with the amplitude $a$, speed $c$, and frequency $\omega$ satisfying
\begin{equation}
c = \tfrac{2}{3} a^2 \cos\theta,
\quad
\omega = \tfrac{2}{3} a^2 \sin\theta .
\end{equation}
(Equivalently, $\sqrt{c^2+\omega^2}= \tfrac{2}{3} a^2$ and $\omega/c=\tan\theta$.)
These peakons \eqref{unifiedfamily-peakon}
reduce to the mCH peakon multiplied by a constant phase in the Hirota case $\theta=0$,
whereas in the NLS case $\theta=\tfrac{1}{2}\pi$,
they become a stationary breather.

In the present paper,
we will establish local well-posedness and blow-up results
for strong solutions of the peakon equation family \eqref{unifiedfamilyeqn}.

Local well-posedness will be proved for the Cauchy problem in Besov spaces,
extended to the setting of $U(1)$-invariant norms.
Compared to Sobolev spaces, one advantage of working with Besov spaces is that
the proof of the blow up criterion is insensitive to the regularity index.
The blow-up mechanism will utilize a transport equation method \cite{GuiLiuOlvQu}
similar to how blow-up has been proven for the FORQ/mCH equation.
However,
the FORQ/mCH blow-up analysis relies on having a global sign condition
on the initial data $m_0(x)=m(0,x)$.
It is possible to replace this condition by a local-in-space condition involving
the initial slope of the velocity field $u^2-u_x{}^2$ in addition to the initial data for $m$.
This is made possible by using the transport equation for $m$ to show that the $L^1$ norm $m$ is conserved \cite{CheLiuQuZha}.
The same approach will be used here.
Specifically,
the $L^1$ norm $m$ will be shown to be conserved for initial-value solutions to the peakon equation family \eqref{unifiedfamilyeqn},
and this global conservation will then be used to establish a local-in-space blow up condition
that involves the initial slope of the velocity field $\Re(e ^{i\theta} (u+u_x)(\bar u-\bar u_x))$
and the initial data for $m$.

As we will see in the analysis,
the initial-value solution for the peakon equation family \eqref{unifiedfamilyeqn}
in general has oscillatory-in-time properties along the characteristic of the velocity field,
whereas for the FORQ/mCH equation \eqref{mCHeqn} it does not.
Consequently,
any global-in-space sign conditions on initial data for $m$ are not a priori preserved in time.
This interesting feature may indicate some essential difference between
the dynamics in the $U(1)$-invariant peakon equation compared to the FORQ/mCH equation,
since the $U(1)$-invariant peakon equation is intrinsically a two-component coupled system.
The oscillatory-in-time dynamics has not been seen previously in other two-component peakon systems.

The rest of the paper is organized as follows.

In Section~\ref{sec:mainresults},
the main results are stated.
Proofs of the local well-posedness Theorem~\ref{thm:wellposedness}
and the $L^1$ conservation Theorem~\ref{thm:L1conservation}
for the peakon equation \eqref{unifiedfamilyeqn}
are provided in Section~\ref{sec:wellposedness} and Section~\ref{sec:L1conservation}, respectively.
In Sections~\ref{sec:blowupcriteria} and~\ref{sec:blowup},
the precise blow-up criterion Theorem~\ref{thm:criterion}
and the main blow-up result Theorem~\ref{thm:blowup}
are presented, respectively.
Finally,
some concluding remarks are made in Section~\ref{sec:conclude}.

The basic aspects of Besov spaces
and the corresponding linear transport theory
needed for the main results
are summarized in the Appendix. 

\section{Main Results}\label{sec:mainresults}

We begin by stating some preliminaries that will be needed for the main results.

It will be convenient to introduce the expression
\begin{equation}\label{Q}
Q=(u+u_x)(\bar u-\bar u_x)= |u|^2-|u_x|^2+2i\Im (\bar u u_x)
\end{equation}
so that the peakon equation family \eqref{unifiedfamilyeqn} can be written succinctly as
\begin{equation}\label{m-eqn}
m_t+( \Re(e^{i\theta}Q) m ){}_x -i\, \Im(e^{i\theta}Q) m=0,
\quad
m=u-u_{xx} .
\end{equation}
The transport form of this equation is given by
\begin{equation}\label{m.transporteqn}
m_t+ J m_x = Km
\end{equation}
with
\begin{gather}
J = \Re(e^{i\theta}Q),
\label{J}
\\
K = i \Im(e^{i\theta}Q)  -\Re(e^{i\theta}Q_x),
\label{K}
\end{gather}
where
\begin{equation}\label{Qx}
Q_x = (u+u_x)\bar m - (\bar u -\bar u_x) m = \Re(\bar u_x m) -2i\Im (\bar u m) .
\end{equation}

Since equation \eqref{m-eqn} is invariant under the $U(1)$ group of constant phase rotations
$$
m \to e^{i\phi} m,
\quad
\phi \in \Rnum,
$$
it will be natural to work with $U(1)$-invariant norms.
For a complex function $f=f_1 + i f_2$ on $\Rnum$,
we define
\begin{equation}\label{U1inv-Lpnorm}
\|f\|_{L^p(\Rnum)}= \left(\int_{\Rnum}|f_1(x)+i f_2(x)|^p\d  x\right)^{\frac{1}{p}}
=\left(\int_{\Rnum}(|f_1|^2+|f_2|^2)^{\frac p2}\d  x\right)^{\frac{1}{p}}
=\left(\int_{\Rnum}(f\bar f)^{\frac p2}\d  x\right)^{\frac{1}{p}}
\end{equation}
which is manifestly invariant under $f\to e^{i\phi}f$.
This $U(1)$-invariant norm for complex functions is equivalent to the standard norm
$$
\|(f_1,f_2)\|_{L^p(\Rnum)} = \|f_1\|_{L^p(\Rnum)}+\|f_2\|_{L^p(\Rnum)}
$$
for corresponding pairs of real functions,
as shown in the Appendix.
Consequently, the Besov norms
\begin{equation}\label{U1inv-besovnorm}
\|m\|_{B^s_{p,r}(\Rnum)} = \|m_1 + im_2\|_{B^s_{p,r}(\Rnum)}
\end{equation}
and
$$
\|(m_1,m_2)\|_{B^s_{p,r}(\Rnum)} = \|m_1\|_{B^s_{p,r}}+\|m_2\|_{B^s_{p,r}(\Rnum)}
$$
on the complex peakon momentum variable $m=m_1+im_2$ are equivalent.
Moreover, for any constant $C\in\Cnum$,
$$
\|Cm\|_{B^s_{p,r}(\Rnum)} = |C| \|m\|_{B^s_{p,r}(\Rnum)} .
$$

We will now state the main theorems.
Hereafter, all norms will refer to the $U(1)$-invariant versions \eqref{U1inv-Lpnorm} and \eqref{U1inv-besovnorm}.

We begin with the statement of local well-posedness in Besov spaces.

\begin{theo}\label{thm:wellposedness}
\thmname{Well-posedness}
Let $1\leq p,r\leq\infty$ and $s>\max(\frac12,\frac{1}{p})$.
Then for any initial data $m_0\in B^s_{p,r}(\Rnum)$,
the peakon equation \eqref{unifiedfamilyeqn} has a unique solution
\begin{equation}\label{wellposedness}
m\in \begin{cases}
\C([0,T^*); B^s_{p,r}(\Rnum))\cap \C^1([0,T^*); B^{s-1}_{p,r}(\Rnum)),
& \text{ if } r<\infty
\\
\C_\omega([0,T^*); B^s_{p,\infty}(\Rnum))\cap \C^{0,1}([0,T^*); B^{s-1}_{p,\infty}(\Rnum)),
& \text{ if }  r=\infty
\end{cases}
\end{equation}
for $0\leq t < T^*$,
where the maximal existence time satisfies
\begin{equation}
T^*\geq \frac{C}{\|m_0\|_{B^s_{p,r}(\Rnum)}^2}
\end{equation}
for some constant $C>0$.
Moreover, the solution $m$ depends continuously on the initial data $m_0$.
\end{theo}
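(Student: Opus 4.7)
The plan is to follow the standard Friedrichs-type iteration scheme for quasilinear transport equations in Besov spaces, in the spirit of proofs known for the Camassa--Holm and FORQ/mCH equations. Writing $u=p*m$ with the Green's function $p(x)=\tfrac12 e^{-|x|}$ of $1-\partial_x^2$, the equation \eqref{m.transporteqn} is viewed as a linear transport equation for $m$ whose coefficients $J,K$ depend nonlinearly on $m$. The hypothesis $s>\max(\tfrac12,\tfrac{1}{p})$ is exactly the threshold at which $B^s_{p,r}(\Rnum)$ is an algebra embedded in $L^\infty$, which supplies the key product estimates; the $U(1)$-invariance of the norms plays no essential role beyond the equivalence proved in the Appendix.

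First I would construct an approximating sequence. Set $m^{(0)}\equiv 0$ and define $m^{(n+1)}$ inductively as the (smooth) solution of the linear problem
\begin{equation*}
\partial_t m^{(n+1)} + J^{(n)}\,\partial_x m^{(n+1)} = K^{(n)}\, m^{(n+1)},\qquad m^{(n+1)}(0,\cdot)=S_{n+1} m_0,
\end{equation*}
where $J^{(n)},K^{(n)}$ are built from $u^{(n)}=p*m^{(n)}$ via \eqref{J}--\eqref{Qx}, and $S_{n+1}$ is a dyadic low-frequency cutoff of the initial datum. Solvability at each step is supplied by the linear transport theory in the Appendix. The central nonlinear estimate to establish is
\begin{equation*}
\|J^{(n)}\|_{B^s_{p,r}(\Rnum)}+\|K^{(n)}\|_{B^s_{p,r}(\Rnum)} \le C\,\|m^{(n)}\|_{B^s_{p,r}(\Rnum)}^2,
\end{equation*}
which follows from the algebra property combined with the smoothing bound $\|u\|_{B^{s+2}_{p,r}}+\|u_x\|_{B^{s+1}_{p,r}} \lesssim \|m\|_{B^s_{p,r}}$ inherited from $m=(1-\partial_x^2)u$. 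Feeding this into the standard transport estimate and comparing with the ODE $y'=Cy^2$ produces a time $T \gtrsim \|m_0\|_{B^s_{p,r}(\Rnum)}^{-2}$ on which $\{m^{(n)}\}$ is uniformly bounded in $L^\infty_T(B^s_{p,r})$, which yields the claimed lower bound on $T^*$.

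Next I would prove convergence in a weaker norm. Subtracting consecutive iterates, $\delta^{(n)}:=m^{(n+1)}-m^{(n)}$ solves a linear transport equation whose source is bilinear in $m^{(n)},m^{(n-1)}$ times $\delta^{(n-1)}$. Applying the transport estimate at regularity $s-1$ and invoking the uniform $B^s_{p,r}$ bounds shows that $\{m^{(n)}\}$ is Cauchy in $\C([0,T];B^{s-1}_{p,r}(\Rnum))$; the limit $m$ retains the $L^\infty_T B^s_{p,r}$ bound by Fatou's lemma, and interpolation upgrades the convergence to $\C([0,T];B^{s'}_{p,r})$ for every $s'<s$, which is enough to pass to the limit in the nonlinear terms. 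The precise regularity asserted in \eqref{wellposedness}---continuity in time when $r<\infty$, weak-$*$ continuity with Lipschitz-in-time behavior in $B^{s-1}_{p,r}$ when $r=\infty$---is then read off directly from \eqref{m.transporteqn} using the coefficient estimates above. Uniqueness follows from the same $B^{s-1}_{p,r}$ transport estimate applied to the difference of two solutions combined with Gronwall, and continuous dependence in $B^s_{p,r}$ is obtained by a Bona--Smith approximation argument using the uniform-in-data lifespan bound.

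The main obstacle is the nonlinear estimate for $J$ and $K$. While $J=\Re(e^{i\theta}Q)$ is essentially $|u|^2-|u_x|^2$ and is therefore two derivatives smoother than $m$, the term $\Re(e^{i\theta}Q_x)$ in $K$ involves products such as $\bar u_x m$ and $\bar u m$ in which one factor is only one derivative smoother than $m$. At the critical regularity threshold $s>\max(\tfrac12,1/p)$ one must therefore use Moser-type product estimates in $B^s_{p,r}(\Rnum)$, together with the mapping properties of $(1-\partial_x^2)^{-1}\partial_x^k$ ($k=1,2$) on Besov spaces and the equivalence of the $U(1)$-invariant norms with the component norms, to close the estimate uniformly in $\theta\in[0,\pi)$. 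Once this single estimate is in place, the rest of the scheme is essentially bookkeeping.
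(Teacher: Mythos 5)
Your proposal is correct and follows the same overall route as the paper: a Picard/Friedrichs iteration with frequency-truncated data $S_{n+1}m_0$, uniform $B^s_{p,r}$ bounds on a time $T\gtrsim \|m_0\|_{B^s_{p,r}}^{-2}$ obtained from the quadratic coefficient estimate $\|J\|_{B^s_{p,r}}+\|K\|_{B^s_{p,r}}\lesssim\|m\|_{B^s_{p,r}}^2$, convergence of the iterates in the weaker space $\C([0,T];B^{s-1}_{p,r})$, recovery of the top regularity by Fatou's lemma and the transport regularity lemma, and uniqueness by a Gronwall argument at level $s-1$. Two points of genuine divergence are worth noting. First, your linearization puts $K^{(n)}m^{(n+1)}$ on the right-hand side, whereas the paper uses $K^{(k)}m^{(k)}$; both are legitimate (yours absorbs the zeroth-order term into the Gronwall factor, the paper's treats it as a source), and the bookkeeping is essentially identical. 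Second, for continuous dependence in the full norm $B^s_{p,r}$ you invoke a Bona--Smith approximation argument, while the paper instead splits $m_n=y_n+z_n$ and applies a continuity-of-the-flow lemma for linear transport equations (Lemma~\ref{allez38}, from Li--Yin); both techniques are standard and each closes the argument, with Bona--Smith trading the flow-continuity lemma for a commutator/smoothing estimate on the regularized data. One small imprecision: the role of the hypothesis $s>\tfrac12$ is not that $B^s_{p,r}$ is an algebra (that only needs $s>\tfrac1p$); it is needed so that the Moser-type product law $\|fg\|_{B^{s-1}_{p,r}}\leq C\|f\|_{B^{s-1}_{p,r}}\|g\|_{B^{s}_{p,r}}$ holds at the lower regularity $s-1$ (the condition $(s-1)+s>\max(0,\tfrac2p-1)$ of Lemma~\ref{Moser}(ii)), which is exactly what your convergence and uniqueness steps at level $B^{s-1}_{p,r}$ require, since $B^{s-1}_{p,r}$ itself need not be an algebra.
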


Next we state the $L^1$ conservative law.

\begin{theo}\label{thm:L1conservation}
\thmname{$L^1$ Conservation}
For initial data $m_0\in H^s(\Rnum)\cap L^1(\Rnum)$, with $s>\frac12$,
let $T^*$ be the maximal existence time of the corresponding solution $m$
to the peakon equation \eqref{unifiedfamilyeqn}.
Then, for $0\leq t < T^*$,
\begin{equation}\label{L1.conservation}
\frac{\d}{\d t}\int_{\Rnum}|m(t,x)|\d x = 0 ,
\end{equation}
and hence $\|m\|_{L^1(\Rnum)} =\|m_0\|_{L^1(\Rnum)}$.
\end{theo}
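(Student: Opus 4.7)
The plan is to integrate the transport equation \eqref{m.transporteqn} along the characteristic flow of the velocity field $J=\Re(e^{i\theta}Q)$ and then change variables to reduce $\|m(t)\|_{L^1(\Rnum)}$ to $\|m_0\|_{L^1(\Rnum)}$. The key observation is that although $m$ evolves along characteristics by a linear scalar ODE with a complex coefficient $K$, only $\Re K$ contributes to the evolution of the modulus $|m|$, and it cancels exactly against the logarithmic derivative of the flow Jacobian.

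First I would construct the characteristic flow. Under the hypothesis $s>\tfrac12$, Theorem~\ref{thm:wellposedness} (specialized to $p=r=2$) gives $m\in \C([0,T^*);H^s(\Rnum))$; by the embedding $H^s(\Rnum)\hookrightarrow L^\infty(\Rnum)$, both $m$ and $u=\tfrac12 e^{-|x|}*m$ together with $u_x$ are bounded on $[0,T]\times\Rnum$ for each $T<T^*$. Consequently $J$ and $J_x=\Re(e^{i\theta}Q_x)$, computed from \eqref{Q} and \eqref{Qx}, are bounded and $J$ is Lipschitz in $x$. The ODE $q_t=J(t,q)$ with $q(0,x)=x$ then admits a unique solution, and $q(t,\cdot)$ is an orientation-preserving diffeomorphism of $\Rnum$ with Jacobian
\begin{equation*}
q_x(t,x)=\exp\!\left(\int_0^t J_x(\tau,q(\tau,x))\,\d\tau\right).
\end{equation*}

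Along each characteristic curve, \eqref{m.transporteqn} reduces to the linear ODE $\tfrac{\d}{\d t}m(t,q(t,x))=K(t,q(t,x))\,m(t,q(t,x))$, with solution
\begin{equation*}
m(t,q(t,x))=m_0(x)\exp\!\left(\int_0^t K(\tau,q(\tau,x))\,\d\tau\right).
\end{equation*}
Because $\Re K=\Re(i\Im(e^{i\theta}Q))-\Re(e^{i\theta}Q_x)=-J_x$, taking moduli gives
\begin{equation*}
|m(t,q(t,x))|=|m_0(x)|\exp\!\left(-\int_0^t J_x(\tau,q(\tau,x))\,\d\tau\right)=\frac{|m_0(x)|}{q_x(t,x)}.
\end{equation*}
A change of variable $y=q(t,x)$ then yields $\int_\Rnum|m(t,y)|\,\d y=\int_\Rnum|m(t,q(t,x))|\,q_x(t,x)\,\d x=\int_\Rnum|m_0(x)|\,\d x$, which is \eqref{L1.conservation}.

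The main technical obstacle is justifying the chain-rule computation along characteristics at the low regularity $m\in \C([0,T^*);H^s(\Rnum))$ with $s>\tfrac12$, since $m$ need not be classically differentiable in $x$ and $|m|$ is not smooth at zeros of $m$. The standard remedy is an approximation argument: first prove the conservation identity for smooth initial data $m_0^n\in H^\infty(\Rnum)\cap L^1(\Rnum)$, where every step above is classical, and then pass to the limit $m_0^n\to m_0$ using the continuous dependence furnished by Theorem~\ref{thm:wellposedness}. The delicate point is that this theorem provides convergence in $H^s(\Rnum)$ rather than in $L^1(\Rnum)$; this gap must be bridged by exploiting the uniform $L^1$ bounds already established for the approximating sequence $m^n$, combined with Fatou's lemma and the time-reversibility of the transport equation \eqref{m.transporteqn} to obtain both inequalities $\|m(t)\|_{L^1(\Rnum)}\le\|m_0\|_{L^1(\Rnum)}$ and $\|m_0\|_{L^1(\Rnum)}\le\|m(t)\|_{L^1(\Rnum)}$.
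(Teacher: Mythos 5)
Your proposal follows essentially the same route as the paper's own proof: solve the characteristic ODE for the flow of $J$, note that the Jacobian is $\exp\bigl(\int_0^t J_x\,\d\tau\bigr)$ while $m$ along characteristics picks up the factor $\exp\bigl(\int_0^t K\,\d\tau\bigr)$, and exploit the cancellation $\Re K=-J_x$ (equivalently, that $K+J_x=i\,\Im(e^{i\theta}Q)$ is purely imaginary) so that $|m(t,h(t,x))|\,h_x(t,x)=|m_0(x)|$ and the change of variables closes the argument. Your closing remarks on justifying the chain rule at low regularity by smooth approximation go beyond what the paper records (it performs the computation directly), but this does not change the substance of the proof.
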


In addition to this conservation law,
both of the Hamiltonians in the bi-Hamiltonian structure \cite{AncChaSzm}
for strong solutions are conserved:
\begin{gather*}
\frac{\d}{\d t}\int_{\Rnum} \big( \sin\theta\Re(\bar u m) +\cos\theta\Im(\bar u_x m) \big)\,dx =0,
\\
\begin{aligned}
\frac{\d}{\d t}\int_{\Rnum} \big( &
\tfrac{1}{4}(|u|^2 -|u_x|^2)( \sin\theta\Re(\bar{u}m) + \cos\theta \Im(\bar{u}_xm) )
\\&
+ \tfrac{1}{2}\Im(\bar{u}u_x) ( \sin\theta\Im(\bar{u}_xm) -\cos\theta \Re(\bar{u}m)  )
\big)\,dx =0 .
\end{aligned}
\end{gather*}
The first conservation law reduces to conservation of the $U(1)$-invariant $H^1$ norm of $u$
when $\theta=\tfrac{1}{2}\pi$.

Now we state the precise blow-up criterion for the peakon equation \eqref{unifiedfamilyeqn},
which comes from transport theory.

\begin{theo}\label{thm:criterion}
\thmname{Blow-up criterion}
For initial data $m_0\in H^s(\Rnum)$, with $s>\frac12$,
let $T>0$ be the maximal existence time of the corresponding solution $m$ to the peakon equation \eqref{unifiedfamilyeqn}.
Then $m$ blows up in finite time if and only if
\begin{equation}\label{blowup.criterion}
\limsup\limits_{t\uparrow T}\inf\limits_{x\in\Rnum} \Re( e^{i\theta} ((u+u_x)\bar m - (\bar u -\bar u_x) m) )
=-\infty .
\end{equation}
\end{theo}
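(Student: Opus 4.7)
The plan is to prove both implications; the easy direction via a contrapositive using the regularity of the solution, and the harder direction by combining a characteristic-based $L^\infty$ bound on $m$ with the linear transport estimate in Besov spaces from the Appendix.

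For the easy direction (blow-up implies the $\limsup$ is $-\infty$, proved by contrapositive), assume $m$ extends as a regular solution past $T$. Then $m \in \C([0, T]; H^s) \hookrightarrow \C([0, T]; L^\infty)$ for $s > 1/2$, so $\|m(t)\|_{L^\infty}$ is bounded on $[0, T]$. The $L^1$ conservation from Theorem~\ref{thm:L1conservation}, together with the representation $u = G * m$ and $u_x = G' * m$ (where $G(x) = \tfrac{1}{2} e^{-|x|}$), gives $\|u\|_{L^\infty}, \|u_x\|_{L^\infty} \leq \tfrac{1}{2}\|m_0\|_{L^1}$. Formula \eqref{Qx} then bounds $\|Q_x\|_{L^\infty}$ uniformly on $[0, T]$, so $\inf_x \Re(e^{i\theta} Q_x)(t, \cdot) \geq -C$ there, contradicting the $\limsup = -\infty$ hypothesis.

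For the harder direction, suppose $\limsup_{t \uparrow T}\inf_x \Re(e^{i\theta} Q_x) > -\infty$, and aim to extend the solution past $T$. The continuity of $m$ in $L^\infty$ on $[0, T)$ (inherited from its $H^s$ regularity) propagates to continuity of $t \mapsto \inf_x \Re(e^{i\theta} Q_x)(t, \cdot)$, from which one extracts a uniform lower bound
\begin{equation*}
\Re(e^{i\theta} Q_x)(t, x) \geq -M \quad \text{on } [0, T) \times \Rnum
\end{equation*}
for some $M > 0$. Using \eqref{m.transporteqn} along the characteristic flow $\dot X(t) = J(t, X(t))$ and the identity $\Re K = -\Re(e^{i\theta} Q_x)$ from \eqref{K},
\begin{equation*}
\frac{\d}{\d t} |m(t, X(t))|^2 = 2\Re(\bar m\, K m) = -2 \Re(e^{i\theta} Q_x)(t, X(t))\, |m|^2 \leq 2M |m|^2,
\end{equation*}
so Gronwall's inequality yields $\|m(t)\|_{L^\infty} \leq \|m_0\|_{L^\infty}\, e^{MT}$ for all $t \in [0, T)$.

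Combining this $L^\infty$ bound on $m$ with Theorem~\ref{thm:L1conservation} produces uniform $L^\infty$ bounds on $u$, $u_x$, $m$, hence on $J$, $J_x$, and $K$, throughout $[0, T)$. The linear transport estimate in Besov spaces from the Appendix applied to the inhomogeneous equation \eqref{m.transporteqn} then gives
\begin{equation*}
\|m(t)\|_{B^s_{2,2}(\Rnum)} \leq \C \|m_0\|_{B^s_{2,2}(\Rnum)} \exp\!\left( \C \int_0^t (\|J_x\|_{L^\infty} + \|K\|_{L^\infty})\, \d s\right),
\end{equation*}
which remains finite as $t \uparrow T$ (noting $H^s = B^s_{2,2}$). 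The blow-up alternative from Theorem~\ref{thm:wellposedness} then permits extending the solution past $T$, contradicting the maximality of $T$. The main obstacle is carefully securing the uniform-in-$t$ lower bound on $\Re(e^{i\theta} Q_x)$ from the $\limsup$ hypothesis together with continuity; once that step is in place, the remaining estimates are standard transport-theoretic consequences, with the $U(1)$-invariant Besov norm framework handling the coupling between real and imaginary parts uniformly.
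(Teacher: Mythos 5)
Your overall strategy coincides with the paper's: control $|m(t,X(t))|$ along the characteristics of $J$ using $\Re K = -\Re(e^{i\theta}Q_x)$, and then convert an $L^\infty$ bound on $m$ into persistence of the $H^s=B^s_{2,2}$ norm via the linear transport estimates, so that the local theory extends the solution past $T$. Your identity $\frac{\d}{\d t}|m(t,X(t))|^2=-2\Re(e^{i\theta}Q_x)\,|m|^2$ is exactly the content of the paper's Lemma~\ref{lem:m.along.h} (the paper writes the solution formula explicitly and takes the modulus, so that the oscillatory factor $e^{i\int\Im(e^{i\theta}Q)}$ drops out), and your final extension step is the paper's Lemma~\ref{lem:integral.blowup.criterion} combined with Corollary~\ref{cor:pointwise.blowup.criterion}.

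There is, however, one step that fails as written: in both directions you invoke the $L^1$ conservation law (Theorem~\ref{thm:L1conservation}) to bound $\|u\|_{L^\infty}$ and $\|u_x\|_{L^\infty}$ by $\tfrac12\|m_0\|_{L^1}$. Theorem~\ref{thm:criterion} assumes only $m_0\in H^s$ with $s>\tfrac12$; it does not assume $m_0\in L^1$, and $H^s$ does not embed into $L^1$, so $\|m_0\|_{L^1}$ may be infinite and Theorem~\ref{thm:L1conservation} is simply not available here (it is needed only later, for Theorem~\ref{thm:blowup}). The repair is elementary and removes the detour entirely: since $u=\tfrac12 e^{-|x|}*m$ and $u_x=-\tfrac12\sgn(x)e^{-|x|}*m$, Young's inequality gives $\|u\|_{L^\infty},\|u_x\|_{L^\infty}\le\|m\|_{L^\infty}$, so the $L^\infty$ bound on $m$ that you already possess (from the characteristic argument in one direction, from the assumed regularity in the other) controls $u$, $u_x$, and hence $Q_x$, with no $L^1$ input. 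A second, smaller imprecision: the displayed Besov estimate with exponent $\C\int_0^t(\|J_x\|_{L^\infty}+\|K\|_{L^\infty})\,\d\tau$ is not one of the Appendix lemmas and does not by itself close the Gronwall loop, because the forcing term $Km$ contributes $\|K\|_{B^s_{p,r}}\|m\|_{L^\infty}$ through the Moser estimate of Lemma~\ref{Moser}(i); one must additionally bound $\|K\|_{B^s_{p,r}}$ and $\|J_x\|_{B^s_{p,r}}$ by $\C\|m\|_{L^\infty}\|m\|_{B^s_{p,r}}$, which is what the paper does in Lemma~\ref{lem:integral.blowup.criterion} via Lemma~\ref{LiYin2}, arriving at the exponent $\C\int_0^t\|m\|_{L^\infty}^2\,\d\tau$. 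With these two repairs your argument reproduces the paper's proof.
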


Combining this blow-up criterion and the $L^1$ conservation law,
we can obtain the following blow-up result.

\begin{theo}\label{thm:blowup}
\thmname{Blow-up}
Let $m_0\in H^s(\Rnum)\cap L^1(\Rnum)$, with $s>\frac12$,
such that there exists some $x_0\in \Rnum$ and some constant $C_0>0$
for which
\begin{equation}\label{blowup.condition}
\Re( e^{i\theta}Q_x(x_0)) \leq -\sqrt{2C_0|m_0(x_0)|}<0
\end{equation}
with $Q_x$ given by expression \eqref{Qx}.
Then the corresponding solution $m$ to the peakon equation \eqref{unifiedfamilyeqn}
blows up at the finite time
\begin{equation}\label{blowup.time}
T^*=\frac{|\Re( e^{i\theta}Q_x )(x_0)|  -\sqrt{|\Re( e^{i\theta}Q_x(x_0))|^2 -2C_0|m_0(x_0)|}}{C_0|m_0(x_0)|} > 0 .
\end{equation}
\end{theo}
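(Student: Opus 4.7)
The plan is to work along the characteristic $q(t,x_0)$ of the velocity field $J=\Re(e^{i\theta}Q)$, derive a Riccati-type differential inequality for the quantity $N(t):=\Re(e^{i\theta}Q_x)(t,q(t,x_0))$, and couple it with the ODE for $M(t):=|m(t,q(t,x_0))|$ to obtain a quadratic upper bound whose vanishing pinpoints the blow-up time.

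First, I would define the characteristic by $q_t=J(t,q)$, $q(0,x_0)=x_0$. Using the transport form \eqref{m.transporteqn} with $K=i\Im(e^{i\theta}Q)-N$ (since $J_x$ restricted to the characteristic is $N$), a direct computation of $\frac{d}{dt}(m\bar m)$ yields the simple ODE
\begin{equation*}
\frac{dM}{dt}=-N\,M.
\end{equation*}
Second, setting $W:=e^{i\theta}Q_x=e^{i\theta}(U\bar m-Vm)$ with $U=u+u_x$, $V=\bar u-\bar u_x$ (so $N=\Re W$), I would use the explicit convolutions $U(x)=\int_x^\infty e^{x-y}m\,dy$ and $V(x)=\int_{-\infty}^x e^{y-x}\bar m\,dy$, the $m$-equation, and integration by parts to derive equations for $\frac{dU}{dt}$ and $\frac{dV}{dt}$ along the characteristic. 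Combining these with the evolutions of $m,\bar m$ and exploiting the identity $e^{i\theta}Q=J+i\Im(e^{i\theta}Q)$, the many terms in $\frac{dW}{dt}$ collapse to
\begin{equation*}
\frac{dW}{dt}=-N\,W+R,
\end{equation*}
where $R$ is a sum of $L^1$-convolution integrals of $m$ satisfying $|R|\leq C_0 M$ for a constant $C_0$ depending only on $\|m_0\|_{L^1}$ (via Theorem \ref{thm:L1conservation} and $\|u\pm u_x\|_{L^\infty}\leq\|m\|_{L^1}$). Taking the real part produces the Riccati inequality
\begin{equation*}
\frac{dN}{dt}\leq -N^2+C_0 M.
\end{equation*}

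Third, I would substitute $\psi(t):=M_0/M(t)$, so $\psi(0)=1$. From $M'=-NM$ one has $\psi'=N\psi$, hence $\psi'(0)=N_0:=\Re(e^{i\theta}Q_x)(x_0)$, and $\psi''=(N'+N^2)\psi\leq C_0 M_0$ by the Riccati inequality. Integrating twice gives
\begin{equation*}
\psi(t)\leq 1+N_0 t+\tfrac{1}{2}C_0 M_0 t^2.
\end{equation*}
The hypothesis $N_0\leq-\sqrt{2C_0 M_0}<0$ guarantees this quadratic has two positive real roots, the smaller of which is precisely $T^*$ in \eqref{blowup.time}. Since $\psi$ is positive and continuous as long as the solution remains smooth, $\psi$ must vanish and $M\to\infty$ by time $T^*$. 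A monotonicity argument based on $\frac{d}{dt}(N^2-2C_0 M)\geq 2|N|(N^2-2C_0 M)$ shows that the condition $N^2\geq 2C_0 M$ is preserved from the initial data, and so $N\leq-\sqrt{2C_0 M}\to-\infty$. Since $\inf_{x\in\Rnum}\Re(e^{i\theta}Q_x)(t,\cdot)\leq N(t)$, the blow-up criterion of Theorem \ref{thm:criterion} is triggered at or before time $T^*$.

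The main obstacle is the second step: obtaining the clean form $\frac{dW}{dt}=-NW+R$ with the precise bound $|R|\leq C_0 M$. This requires careful bookkeeping of all contributions from $\frac{dU}{dt}$ and $\frac{dV}{dt}$, using integration by parts to remove every $m_x$ from the remainder so that only $L^1$-convolution integrals of $m$ remain, and exploiting the cancellation made possible by $e^{i\theta}Q=J+i\Im(e^{i\theta}Q)$ to ensure that the only $W$-linear contribution is exactly the Riccati term $-NW$. Once this is done, the $L^1$ conservation from Theorem \ref{thm:L1conservation} plays its essential role of bounding all remaining convolutions uniformly by powers of $\|m_0\|_{L^1}$.
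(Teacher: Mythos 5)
Your proposal is correct and follows essentially the same route as the paper: the Riccati inequality $N'\leq -N^2+C_0M$ along the characteristic is exactly the paper's estimate \eqref{Jxeqn.estimate} (obtained there via the transport equations for $v^\pm=u\pm u_x$ and for $J_x=\Re W$, with $C_0=7\|m_0\|_{L^1}^3$), and your substitution $\psi=M_0/M=\exp\big(\int_0^t J_x(\tau,h(\tau,x_0))\,\d\tau\big)$ with $\psi''\leq C_0M_0$ is precisely the paper's second-order inequality \eqref{blowup.rel} leading to the same quadratic and the same root $T^*$. Working with the complex quantity $W=e^{i\theta}Q_x$ before taking real parts, and the closing monotonicity argument for $N^2-2C_0M$, are cosmetic variations on the paper's computation.
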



We will prove Theorems~\ref{thm:wellposedness}, \ref{thm:L1conservation}, \ref{thm:criterion}, and~\ref{thm:blowup}
in sections~\ref{sec:wellposedness}, \ref{sec:L1conservation}, \ref{sec:blowupcriteria}, and~\ref{sec:blowup},
respectively.

\section{Local well-posedness}\label{sec:wellposedness}

The local well-posedness of the peakon equation \eqref{unifiedfamilyeqn}
in Theorem~\ref{thm:wellposedness}
can be established by means of
a standard Picard scheme using Littlewood-Paley decomposition theory in Besov spaces
(see the Appendix for details),
applied to the equation in transport form
\begin{align}\label{ivp-transport}
m_t+Jm_x=Km,
\quad
m|_{t=0}=m_0 .
\end{align}

There are three main steps, which are proved in the subsequent three subsections.
Hereafter, all spaces will be understood to have the domain $\Rnum$,
and we assume $s>\max\{\frac{1}{2},\frac{1}{p}\}$.

To establish uniform boundedness of the approximate solutions in the Picard scheme,
we work in the space $B^s_{p,r}$.
Convergence of these solutions is then proved in the space $B^{s-1}_{p,r}$,
which is a weaker space than $B^{s}_{p,r}$
because of the embedding relation $B^{s}_{p,r}\hookrightarrow B^{s-1}_{p,r}$.

\subsection{Uniform boundedness of the approximate solutions}

Consider the linear transport system
\begin{equation}\label{linear.transport.sys}
m^{(k+1)}_t+J^{(k)}m^{(k+1)}_x =K^{(k)} m^{(k)},
\quad
m^{(k+1)}|_{t=0}=S_{k+1}m_0,
\quad
k\in\Nnum
\end{equation}
with smooth data $m_0$,
where $S_{q}$ is the low frequency cut-off operator (see Remark~\ref{rem:S.op}).
We start from $u^{(0)}=m^{(0)}=0$, and assume that, for all $0\leq t <T$,
\begin{equation}\label{mk.wellposedness.spaces}
m^{(k)}(t,x)\in E^s_{p,r}:= \begin{cases}
\C([0,T); B^s_{p,r})\cap \C^1([0,T); B^{s-1}_{p,r}),
& \text{ if } r<\infty
\\
\C_\omega([0,T); B^s_{p,\infty})\cap \C^{0,1}([0,T); B^{s-1}_{p,\infty}),
& \text{ if }  r=\infty
\end{cases}.
\end{equation}
Our goal in this subsection will be to show that \eqref{mk.wellposedness.spaces} holds for $m^{(k+1)}$.

From linear transport theory, the solution $m^{(k+1)}$ of equation \eqref{linear.transport.sys}
satisfies the a priori estimate
(see Lemmas~\ref{apriori-estimates} and~\ref{Luowei})
\begin{equation}\label{mk+1.estimate}
\|m^{(k+1)}\|_{B^{s}_{p,r}}
\leq
e^{C\alpha^{(k)}(t)} \|S_{k+1}m_0\|_{B^s_{p,r}}+\int_0^t e^{C(\alpha^{(k)}(t)-\alpha^{(k)}(\tau))} \|K^{(k)}(\tau)m^{(k)}(\tau)\|_{B^s_{p,r}} \d\tau
\end{equation}
where $C$ is some positive constant,
and where
\begin{align*}\label{A-expression}
\alpha^{(k)}(t):=\begin{cases}
\int_0^t\|\partial_x J^{(k)}(\tau)\|_{B^{1/p}_{p,\infty}\cap L^\infty}\,\d\tau,
& s<1+\frac{1}{p},
\\
\int_0^t\|\partial_x J^{(k)}(\tau)\|_{B^{s-1}_{p,r}}\,\d\tau,
& s>1+\frac{1}{p},~r>1;
s=1+\frac{1}{p},~ r=1
\\
\int_0^t\|J^{(k)}(\tau)\|_{B^{s+1}_{p,r}}\,\d\tau,
& s=1+\frac{1}{p},~r>1
\end{cases}.
\end{align*}

Each term on the right-hand side of \eqref{mk+1.estimate}
can be controlled in terms of $\|m^{(k)}\|_{B^{s}_{p,r}}$ and $\|m_0\|_{B^{s}_{p,r}}$ as follows.

First,
we have
\begin{equation*}
\|S_{k+1}m_0\|_{B^{s}_{p,r}}\leq C_0\|m_0\|_{B^{s}_{p,r}}
\end{equation*}
(see Remark~\ref{rem:S.op})
for some positive constant $C_0$.

Next,
since $u^{(k)}=(1-\partial_x^2)^{-1}m^{(k)}$
where the symbol of the operator $(1-\partial_x^2)^{-1}$ is
$(1+\xi^2)^{-1}$ which is an $S^{-2}$-multiplier,
then by Lemma~\ref{lem:Besov.spaces}(viii)) we have
\begin{align*}
\|u^{(k)}\|_{B^{s}_{p,r}} & =
\|(1-\partial_x^2)^{-1} m^{(k)}\|_{B^{s}_{p,r}}\leq C\|m^{(k)}\|{B^{s-2}_{p,r}} \leq C\|m^{(k)}\|_{B^{s}_{p,r}}
\\
\|u^{(k)}_x\|_{B^{s}_{p,r}} & =
\|(1-\partial_x^2)^{-1} m^{(k)}_x\|_{B^{s}_{p,r}}\leq C\|m^{(k)}\|_{B^{s-1}_{p,r}}\leq C\|m^{(k)}\|_{B^{s}_{p,r}} .
\end{align*}
Hence we obtain
\begin{equation*}
\|u^{(k)}\pm u^{(k)}_x\|_{B^{s}_{p,r}}
\leq C\|m^{(k)}\|_{B^{s}_{p,r}} .
\end{equation*}
To proceed,
we use the properties that $B^s_{p,r}$ is an algebra and obeys the embedding
$B^s_{p,r}\hookrightarrow B^{s-1}_{p,r}\hookrightarrow B^{s-2}_{p,r}$,
whereby
\begin{equation}\label{Jx.ReK.estimate}
\|\Re K\|_{B^{s}_{p,r}}
=\|J_x^{(k)}\|_{B^{s}_{p,r}}
\leq \|Q_x^{(k)}\|_{B^{s}_{p,r}}
\leq C_1\big\| Q^{(k)}\big\|_{B^{s+1}_{p,r}}
\leq 4C_1\|m^{(k)}\|_{B^s_{p,r}}^2
\end{equation}
and
\begin{equation}\label{ImK.estimate}
\|\Im K\|_{B^{s}_{p,r}}
\leq \|Q^{(k)}\|_{B^{s}_{p,r}}
\leq 4\|m^{(k)}\|_{B^{s}_{p,r}}^2
\end{equation}
where $C_1$ is some positive constant.
Then we have the bounds
\begin{align*}
C\alpha^{(k)}(t) & \leq C_2\int_0^t \|m^{(k)}(\tau)\|_{B^s_{p,r}}^2 \,\d\tau,
\\
C(\alpha^{(k)}(t)-\alpha^{(k)}(\tau)) & \leq C_2\int_\tau^t \|m^{(k)}(\tau)\|_{B^s_{p,r}}^2\,\d \tau,
\end{align*}
where $C_2=4C_1C$ is a positive constant.

Hence, for the two terms on the right-hand side of \eqref{mk+1.estimate},
we obtain
\begin{equation*}
e^{C\alpha^{(k)}(t)} \|S_{k+1}m_0\|_{B^s_{p,r}}
\leq
C_0 e^{ C_2\int_0^t \|m^{(k)}(\tau)\|_{B^s_{p,r}}^2 \,\d\tau} \|m_0\|_{B^{s}_{p,r}}
\end{equation*}
and
\begin{equation*}
\int_0^t e^{C(\alpha^{(k)}(t)-\alpha^{(k)}(\tau))} \|K^{(k)}(\tau)m^{(k)}(\tau)\|_{B^s_{p,r}}\,\d\tau
\leq
4(1+C_1) \int_0^t e^{ C_2\int_\tau^t \|m^{(k)}(t')\|_{B^s_{p,r}}^2\,\d t' } \|m^{(k)}(\tau)\|_{B^{s}_{p,r}}^3\,\d\tau .
\end{equation*}
Combining these terms, we get the estimate
\begin{equation}\label{mk+1.estimate2}
\|m^{(k+1)}\|_{B^{s}_{p,r}}
\leq
C_0 e^{ C_2\int_0^t \|m^{(k)}(t')\|_{B^s_{p,r}}^2 \,\d t' } \|m_0\|_{B^{s}_{p,r}}
+ 4(1+C_1) \int_0^t e^{ C_2\int_\tau^t \|m^{(k)}(t')\|_{B^s_{p,r}}^2\,\d t' } \|m^{(k)}(\tau)\|_{B^{s}_{p,r}}^3\,\d\tau .
\end{equation}

Now we want to control the terms on the right-hand side of \eqref{mk+1.estimate2}
strictly in terms of $\|m_0\|_{B^{s}_{p,r}}$.
We will use the an induction argument.

Fix a $T>0$ such that
\begin{align}\label{T.condition}
1-\c \|m_0\|_{B^s_{p,r}}^2 T>0
\end{align}
for some $\c>0$.
Suppose that, for a.e.~$t\in [0,T]$ and $k\in \mathbb N$,
$m^{(k)}(t)$ obeys the bound
\begin{equation}\label{mk.bound}
\|m^{(k)}(t)\|_{B^{s}_{p,r}}
\leq \frac{\cc \|m_0\|_{B^{s}_{p,r}}}{\sqrt{1-\c \|m_0\|_{B^s_{p,r}}^2 t}}
\end{equation}
where $\cc>0$ will be suitable chosen.
Clearly, this bound is valid when $k=0$, since $m^{(0)}=0$.
To show that the bound \eqref{mk.bound} holds for $m^{(k+1)}(t)$,
we first note that
\begin{equation*}
\int_\tau^t \|m^{(k)}(t')\|_{B^s_{p,r}}^2 \,\d t'
\leq \frac{\cc^2}{\c}\ln\left(\frac{1-\c \|m_0\|_{B^s_{p,r}}^2 \tau}{1-\c \|m_0\|_{B^s_{p,r}}^2 t}\right) ,
\end{equation*}
and thus
\begin{align*}
e^{C_2\int_\tau^t \|m^{(k)}(t')\|_{B^s_{p,r}}^2\d t'}
\leq \left(\frac{1-\c \|m_0\|_{B^s_{p,r}}^2\tau}{1-\c \|m_0\|_{B^s_{p,r}}^2 t}\right)^{a}
\end{align*}
where $a=C_2 \cc^2/\c$.
Then we use the estimate \eqref{mk+1.estimate2} to obtain
\begin{align*}
\|m^{(k+1)}\|_{B^{s}_{p,r}}
& \leq
(1-\c \|m_0\|_{B^s_{p,r}}^2 t)^{-a} \Big(
C_0 \|m_0\|_{B^s_{p,r}}
+ 4(1+C_1) \cc^3 \|m_0\|_{B^s_{p,r}}^3
\int_0^t (1-\c \|m_0\|_{B^s_{p,r}}^2 \tau)^{a -3/2}\,\d \tau
\Big)
\\
& =
\big( C_0 -\tfrac{8}{1-2a} (1+C_1) (\cc^3/\c)  \big) \|m_0\|_{B^s_{p,r}} (1-\c \|m_0\|_{B^s_{p,r}}^2 t)^{-a}
\\&\qquad
+\tfrac{8}{1-2a} (1+C_1) (\cc^3/\c)  \|m_0\|_{B^s_{p,r}} (1-\c \|m_0\|_{B^s_{p,r}}^2 t)^{-1/2}
\\
& =
\tfrac{8}{1-2a}(1+C_1) (\cc^3/\c)\frac{\|m_0\||_{B^{s}_{p,r}}}{\sqrt{1-\c \|m_0\|_{B^s_{p,r}}^2 t}}
\end{align*}
if $(1+C_1)(\cc^3/\c) = \tfrac{1-2a}{8}C_0 >0$.
This will establish the bound \eqref{mk.bound} for $m^{(k+1)}(t)$
if $(1+C_1) (\cc^3/\c)\leq \tfrac{1-2a}{8}\cc$.
Hence, we need the positive constants $\cc$ and $\c$ to satisfy
$C_2 \cc^2 <\tfrac{1}{2}\c$,
$\cc\geq C_0$,
and
$\cc^2(1+C_1)\leq \tfrac{1}{8}(\c -2C_2 \cc^2)$.
It is sufficient to let
\begin{equation*}
C=\max(4(1+C_1),C_2,C_0)
\end{equation*}
and choose $\c=4 C^3$ and $\cc=C$,
which can be readily verified to satisfy the preceding three inequalities.

The induction step is now proved,
and therefore, $\{m^{(k)}(t)\}_{k\in \Nnum}$ in $L^\infty([0,T];B^{s}_{p,r})$
is uniformly bounded by the positive constant
\begin{equation}\label{mk.bound.CT}
C_T := \frac{C \|m_0\|_{B^{s}_{p,r}}}{\sqrt{1- 4C^3 \|m_0\|_{B^s_{p,r}}^2 T}} .
\end{equation}
By Lemma \ref{regularity}, we see that the sequence $\{m^{k}\}_{k\in\mathbb N}$ is bounded in $E^s_{p,r}(T).$

\subsection{Convergence and regularity of the solutions}

Our next goal will be to prove that $\{m^{(k)}\}_{k\in \Nnum}$ is a Cauchy sequence in $\mathcal C([0,T];B^{s-1}_{p,r})$.
Consider
\begin{equation*}
m^{(k+1,l)}:=m^{(k+l+1)}-m^{(k)}
\end{equation*}
which, from the linear transport system \eqref{linear.transport.sys},
satisfies
\begin{equation*}
m^{(k+1,l)}_t +J^{(k+l)} m^{(k+1,l)}_x
= R^{(k+1,l)},
\quad
R^{(k+1,l)}: = (J^{(k)}-J^{(k+l)}) m_x^{(k+1)} + K^{(k+l)}m^{(k+l)} - K^{(k)}m^{(k)},
\end{equation*}
with initial data
\begin{equation*}
m^{(k+1,l)}|_{t=0} =(S_{k+l+1}-S_{k+1})m_0 .
\end{equation*}
Applying the a priori estimate given by Lemmas~\ref{apriori-estimates} and~\ref{Luowei},
and recalling that $s>\max(\frac{1}{2},\frac{1}{p})$,
we obtain
\begin{equation}\label{summary1}
\begin{aligned}
\|m^{(k+1,l)}(t)\|_{B^{s-1}_{p,r}}
& \leq e^{C \beta^{(k+l)}(t)}\|(S_{k+l+1}-S_{k+1})m_0\|_{B^{s-1}_{p,r}}
\\&\qquad
+ C\int_0^t e^{C(\beta^{(k+l)}(t)-\beta^{(k+l)}(\tau))} \| R^{(k+1,l)}(\tau) \|_{B^{s-1}_{p,r}}\,\d \tau
\end{aligned}
\end{equation}
where
\begin{align}\label{N-expression}
\beta^{(k)}(t):=\begin{cases}
\int_0^t\|\partial_x J^{(k)}(\tau)\|_{B^{1/p}_{p,\infty}\cap L^\infty}\,\d\tau,
& s-1<1+\frac{1}{p},
\\
\int_0^t\|\partial_x J^{(k)}(\tau)\|_{B^{s-2}_{p,r}}\,\d\tau,
& s-1>1+\frac{1}{p},~r>1;
s-1=1+\frac{1}{p},~ r=1
\\
\int_0^t\|J^{(k)}(\tau)\|_{B^{s}_{p,r}}\,\d\tau,
& s-1=1+\frac{1}{p},~r>1
\end{cases}
\end{align}
It is straightforward to bound $\beta^{(k)}(t)$, for all $t\in[0, T]$,
due to the boundedness of $\|m^{(k)}\|_{B^{s}_{p,r}}$.
Specifically,
in the first two cases,
\begin{align*}
& \|\partial_xJ^{(k)}\|_{B^{s-2}_{p,r}}\leq C\|J^{(k)}\|_{B^{s-1}_{p,r}}\leq C\|J^{(k)}\|_{B^{s}_{p,r}}
=C\|e^{i\theta}(u^{(k)}+u_x^{(k)})(\bar u^{(k)}+\bar u_x^{(k)})\|_{B^{s}_{p,r}}\leq C
\|m^{(k)}\|^2_{B^{s}_{p,r}} ,
\\
& \|J^{(k)}\|_{B^{s}_{p,r}}
=\|e^{i\theta}(u^{(k)}+u_x^{(k)})(\bar u^{(k)}+\bar u_x^{(k)})\|_{B^{s}_{p,r}}\leq C
\|m^{(k)}\|^2_{B^{s}_{p,r}} .
\end{align*}
In the third case,
since $s>1/p$ whereby $s+1>1+1/p$,
we have
$B^s_{p,r}\hookrightarrow L^\infty$ and $B^{s+1}_{p,r}\hookrightarrow B^{1+1/p}_{p,\infty}$
so thus
$\|\partial_xJ\|_{L^\infty}\leq C\|\partial_xJ\|_{B^s_{p,r}}\leq C\|J\|_{B^{s+1}_{p,r}}\leq C\|m\|^2_{B^s_{p,r}}$
and
$\|\partial_xJ\|_{B^{1/p}_{p,\infty}}\leq C\|J\|_{B^{1/p+1}_{p,\infty}}\leq C\|J\|_{B^{s+1}_{p,r}}\leq C\|m\|^2_{B^s_{p,r}}$.
This implies
\begin{align*}
\|\partial_xJ\|_{B^{1/p}_{p,\infty}\cap L^\infty}=
\|\partial_xJ\|_{B^{1/p}_{p,\infty}}+\|\partial_xJ\|_{L^\infty}\leq C\|m\|^2_{B^s_{p,r}} .
\end{align*}
Hence,
\begin{align}\label{plugging1}
\beta^{(k)}(t)\leq C\int_0^t\|m^{(k)}(\tau)\|_{B^s_{p,r}}^2\,\d\tau\leq C_T.
\end{align}

To control the term in the a priori estimate \eqref{summary1}
involving the initial data $m_0$,
we make the following direct computation.
Firstly, we have
\begin{equation}
\|(S_{k+l+1}-S_{k+1})m_0\|_{B^{s-1}_{p,r}}
=\big\|\sum\limits_{1\leq i\leq l} \Delta_{k+i} m_0\big\|_{B^{ s-1}_{p,r}}
=\big(\sum\limits_{j\geq-1} 2^{jr( s-1)}\|\Delta_j (\sum\limits_{1\leq i\leq l} \Delta_{k+i} m_0)\|^r_{L^p} \big)^{\frac{1}{r}} .
\end{equation}
Secondly, the sum over $j\geq -1$ has only a finite number of non-zero terms,
since
$\Delta_j\sum\limits_{1\leq i\leq l}\Delta_{k+i} f=0$
for $j\leq k-1$ and $j\geq k+l+2$
by Remark~\ref{rem:S.op}(ii),
where $f$ is any tempered distributions in $\mathcal S'$.
Therefore, we obtain
\begin{equation}
\begin{aligned}
\|(S_{k+l+1}-S_{k+1})m_0\|_{B^{s-1}_{p,r}}
&=
\big(\sum\limits_{j=k}^{k+l+1} 2^{-jr}2^{jrs}\|\Delta_j(\sum\limits_{1\leq i\leq l} \Delta_{k+i} m_0)\|^r_{L^p} \big)^{\frac{1}{r}}\\
&=
\big(\sum\limits_{j=k}^{k+l+1} 2^{-jr}2^{jrs}\|\Delta_j(S_{k+l+1}-S_{k+1})m_0\|^r_{L^p} \big)^{\frac{1}{r}}\\
&\leq
\big(\sum\limits_{j=k}^{k+l+1} 2^{-k}2^{jrs}\|\Delta_j(S_{k+l+1}-S_{k+1})m_0\|^r_{L^p} \big)^{\frac{1}{r}} .
\end{aligned}
\end{equation}
Thirdly, we use the inequalities
\begin{align*}
\|\Delta_j(S_{k+l+1}-S_{k+1})m_0\|_{L^p}
&=\|(S_{k+l+1}-S_{k+1})\Delta_jm_0\|_{L^p}\\
&\leq \|S_{k+l+1}\Delta_jm_0\|_{L^p} +\|S_{k+1}\Delta_jm_0\|_{L^p}\\
&\leq C\|\Delta_j m_0\|_{L^p} .
\end{align*}
This yields
\begin{equation}\label{plugging2}
\begin{aligned}
\|(S_{k+l+1}-S_{k+1})m_0\|_{B^{s-1}_{p,r}}
&\leq
C\, 2^{-k}\big(\sum\limits_{j=k}^{k+l+1} 2^{jsr}\|\Delta_jm_0\|^r_{L^p} \big)^{\frac{1}{r}}\\
&\leq
C\, 2^{-k}\big(\sum\limits_{j\geq -1} 2^{jsr}\|\Delta_jm_0\|^r_{L^p} \big)^{\frac{1}{r}}\\
&=
C\,2^{-k}\|m_0\|_{B^s_{p,r}} ,
\end{aligned}
\end{equation}
which controls the initial data term in the a priori estimate \eqref{summary1}.

Next, we need to control the term $\| R^{(k+1,l)}\|_{B^{s-1}_{p,r}}$ in the a priori estimate \eqref{summary1}.
It consists of three separate parts:
\begin{align*}
\begin{aligned}
\| R^{(k+1,l)}\|_{B^{s-1}_{p,r}}
& \leq
\| (J^{(k)}-J^{(k+l)}) m_x^{(k+1)} \|_{B^{s-1}_{p,r}}
+ \| K^{(k)}m^{(k)} - K^{(k+l)}m^{(k+l)} \|_{B^{s-1}_{p,r}} \\
& \leq
\| (Q^{(k)}-Q^{(k+l)}) m_x^{(k+1)} \|_{B^{s-1}_{p,r}}
+ \| Q^{(k)}m^{(k)} - Q^{(k+l)}m^{(k+l)} \|_{B^{s-1}_{p,r}}
\\&\qquad
+ \| Q_x^{(k)}m^{(k)} - Q_x^{(k+l)}m^{(k+l)} \|_{B^{s-1}_{p,r}}
\end{aligned}
\end{align*}
where
\begin{gather}
(Q^{(k)} -Q^{(k+l)}) m^{(k+1)}_x =
( (u^{(k)}+u^{(k)}_x)(\bar u^{(k)}-\bar u^{(k)}_x) - (u^{(k+l)}+u^{(k+l)}_x)(\bar u^{(k+l)}-\bar u^{(k+l)}_x)
)m^{(k+1)}_x,
\label{Jterms}
\\
\begin{aligned}
Q^{(k)}m^{(k)} - Q^{(k+l)}m^{(k+l)} & =
\big( (u^{(k)}+u^{(k)})(\bar u^{(k)}-\bar u_x^{(k)}) \big) m^{(k)}
\\&\qquad
-\big( (u^{(k+l)}+u^{(k+l)})(\bar u^{(k+l)}-\bar u_x^{(k+l)}) \big) m^{(k+l)},
\label{Kterms1}
\end{aligned}
\\
\begin{aligned}
Q^{(k)}_x m^{(k)} - Q^{(k+l)}_x m^{(k+l)} & =
\big( (u^{(k)}+u^{(k)})(\bar u^{(k)}-\bar u_x^{(k)}) \big)_x m^{(k)}
\\&\qquad
-\big( (u^{(k+l)}+u^{(k+l)})(\bar u^{(k+l)}-\bar u_x^{(k+l)}) \big)_x m^{(k+l)}.
\label{Kterms2}
\end{aligned}
\end{gather}
Note that $B^{s-1}_{p,r}$ is not guaranteed to be a Banach algebra,
due to $s>\max(\frac{1}{2},\frac{1}{p})$.
But we still have a Moser-type inequality
$\|fg\|_{B^{s-1}_{p,r}}\leq C\|f\|_{B^{s-1}_{p,r}}\|g\|_{B^{s}_{p,r}}$
provided by Lemma~\ref{Moser}(ii)
which holds under the conditions
$s_1 \leq s_2$, $s_2 \geq \frac{1}{p}$, and $s_1 + s_2 > \max(0, \frac{2}{p}- 1)$,
satisfied by the regularity indices $s_1=s-1$ and $s_2=s$.
We can then control each part \eqref{Jterms}--\eqref{Kterms2}
by the method of adding and subtracting terms to produce factors of the form
$\| u^{(k+l)} - u^{(k)} \|_{B^{s}_{p,r}} \leq \|m^{(k+1,l)}\|_{B^{s-1}_{p,r}}$
or $\| u_x^{(k+l)} - u_x^{(k)} \|_{B^{s}_{p,r}} \leq \|m^{(k+1,l)}\|_{B^{s-1}_{p,r}}$.

In expression \eqref{Jterms}, consider the group of terms
\begin{equation*}
( \bar u^{(k+1)}_x u^{(k+1)}_x -\bar u^{(k)}_x u^{(k)}_x )m^{(k+1)}_x
=( \bar u^{(k+1)}_x (u^{(k+1)}_x-u_x^{(k)}) +u^{(k)}_x(\bar u^{(k+1)}_x-\bar u^{(k)}_x) )m^{(k+1)}_x .
\end{equation*}
Hence we have
\begin{equation*}
\begin{aligned}
\| ( \bar u^{(k+1)}_x u^{(k+1)}_x -\bar u^{(k)}_x u^{(k)}_x )m^{(k+1)}_x \|_{B^{s-1}_{p,r}}
& = \| ( \bar u^{(k+l)}_x (u^{(k+l)}_x-u_x^{(k)}) +u^{(k)}_x(\bar u^{(k+l)}_x-\bar u^{(k)}_x)  )m^{(k+1)}_x \|_{B^{s-1}_{p,r}} \\
&\leq C \|u_x^{(k+l)}-u_x^{(k)}\|_{B^{s}_{p,r}} \big(\|u_x^{(k+l)}\|_{B^{s}_{p,r}} +\|u_x^{(k)}\|_{B^{s}_{p,r}}\big) \|m^{(k+1)}_x\|_{B^{s-1}_{p,r}} \\
&\leq C \|m^{(k+l,k)}\|_{B^{s-1}_{p,r}} \big(\|m^{(k+1)}\|_{B^{s}_{p,r}} +\|m^{(k)}\|_{B^{s}_{p,r}}\big) \|m^{(k+1)}\|_{B^{s}_{p,r}} \\
&\leq C \|m^{(k+l,k)}\|_{B^{s-1}_{p,r}} \big(\|m^{(k+1)}\|^2_{B^{s}_{p,r}} +\|m^{(k)}\|^2_{B^{s}_{p,r}}\big) .
\end{aligned}
\end{equation*}
The other three groups of terms in expression \eqref{Jterms} can be estimated in the same way.

This gives
\begin{equation}\label{Jterms.estimate}
\| (Q^{(k)}-Q^{(k+l)}) m_x^{(k+1)} \|_{B^{s-1}_{p,r}}
\leq C \|m^{(k+l,k)}\|_{B^{s-1}_{p,r}} \big(\|m^{(k+1)}\|^2_{B^{s}_{p,r}} +\|m^{(k)}\|^2_{B^{s}_{p,r}}\big) .
\end{equation}
Similarly, we can obtain the same estimate for the terms in \eqref{Kterms1} and \eqref{Kterms2}:
\begin{equation}\label{Kterms.estimate}
\begin{aligned}
& \| Q_x^{(k)}m^{(k)} - Q_x^{(k+l)}m^{(k+l)} \|_{B^{s-1}_{p,r}}
+\| Q^{(k)}m^{(k)} - Q^{(k+l)}m^{(k+l)} \|_{B^{s-1}_{p,r}}
\\
& \leq
C \|m^{(k+l,k)}\|_{B^{s-1}_{p,r}} \big(\|m^{(k+1)}\|^2_{B^{s}_{p,r}} +\|m^{(k)}\|^2_{B^{s}_{p,r}}\big) .
\end{aligned}
\end{equation}
Combining these estimates \eqref{Jterms.estimate} and \eqref{Kterms.estimate},
we have
\begin{equation}\label{plugging3}
\begin{aligned}
\| R^{(k+1,l)}\|_{B^{s-1}_{p,r}}
& \leq C \|m^{(k+l,k)}\|_{B^{s-1}_{p,r}} \big(\|m^{(k+1)}\|^2_{B^{s}_{p,r}} +\|m^{(k)}\|^2_{B^{s}_{p,r}}\big) \\
& \leq C_T \|m^{(k+l,k)}\|_{B^{s-1}_{p,r}}.
\end{aligned}
\end{equation}

Now we substitute the main estimates \eqref{plugging2} and \eqref{plugging3}
into the a priori estimate \eqref{summary1}
and use the estimate \eqref{plugging1} for $\beta^{(k)}(t)$.
This yields
\begin{align*}
\|m^{(k+1,l)}(t)\|_{B^{s-1}_{p,r}}
\leq
C_T\Big( 2^{-k}+\int_0^t \|m^{(k,l)}(\tau)\|_{B^{s-1}_{p,r}}\,\d\tau \Big) .
\end{align*}
Note that, for $k=0$,
\begin{align*}
\|m^{(1,l)}(t)\|_{B^{s-1}_{p,r}} =\|m^{(l+1)}(t) -m^{(0)}(t)\|_{B^{s-1}_{p,r}}
=\|m^{(l+1)}(t)\|_{B^{s-1}_{p,r}} &\leq C_T .
\end{align*}
Then, by induction on $k$,
we obtain
\begin{align*}
\|m^{(k+1,l)}(t)\|_{B^{s-1}_{p,r}}
&\leq
C_T 2^{-k} \sum\limits_{i=0}^{k} \frac{(2C_T t)^{i}}{i!}
\\
&\leq
C_T\Big( 2^{-k} e ^{2C_T t} +2 \frac{(C_T t)^{(k+1)}}{(k+1)!}  \Big)
\\
&\leq
C_T\Big( 2^{-k} e ^{2C_T T} +2 \frac{(C_T T)^{(k+1)}}{(k+1)!}  \Big)
\end{align*}
which establishes
$\|m^{(k+1,l)}(t)\|_{B^{s-1}_{p,r}} \rightarrow 0$ uniformly for  $l\in\Nnum$,
as $k\rightarrow \infty$.

So far,
we have proved that
$\{m^{(k)}\}_{k\in \mathbb N}$ is bounded in $\mathcal C([0,T];B^s_{p,r})$ with the bound $C_T$,
and that
$\{m^{(k)}\}_{k\in \mathbb N}$ is a Cauchy sequence  in $\mathcal C([0,T];B^{s-1}_{p,r})$ and  converges to $m$ in the space  $\mathcal C([0,T];B^{s-1}_{p,r}) \subset L^\infty([0,T];\mathcal S')$.
Therefore, by Fatou's lemma (see \ref{lem:Besov.spaces}(vii)), we obtain
\begin{align}\label{Fatou-bound}
\|m\|_{L_T^\infty(B^{s}_{p,r})}\leq \frac{C^2\|m_0\|_{B^{s}_{p,r}}}{\sqrt{1-4C^3\|m_0\|_{B^s_{p,r}}^2 T}}
\end{align}
Finally, we can show that $m$ solves equation \eqref{ivp-transport}
by the following steps.

Return to the linear transport system \eqref{linear.transport.sys} and consider
\begin{align*}
\begin{aligned}
& \int_0^t\Big(\langle m^{(k+1)},\partial_t\phi(\tau)\rangle
+\langle m^{(k+1)}(\tau)J^{(k)}(\tau),\partial_x\phi(\tau)\rangle
+\langle m^{(k+1)}(\tau)J_x^{(k)}(\tau)-\langle K^{(k)}(\tau)m^{(k)}(\tau),\phi(\tau)\rangle
\Big)\d\tau
\\&
=\langle m^{(k+1)}(t),\phi(t)\rangle-\langle S_{k+1}m_0,\phi(0)\rangle
\end{aligned}
\end{align*}
for any test function $\phi\in \mathcal  C^1([0, T];\mathcal  S)$.
Applying Proposition~\ref{testfunction},
we can take the limit as $k\rightarrow\infty$
and examine each term on the left-hand side.
For instance,
we see
\begin{align*}
&\left|\int_0^t\langle m^{(k+1)}J_x^{(k)}, \phi\rangle\d t'-\int_0^t\langle mJ_x, \phi\rangle\d\tau\right|\\
&=\left|\int_0^t\langle m^{(k+1)}J_x^{(k)}-mJ_x, \phi\rangle\d\tau\right|\\
&\leq \int_0^t\left|\langle (m^{(k+1)}-m)J_x^{(k)}, \phi\rangle\right|\d\tau
+\int_0^t\left|\langle m(J^{(k)}-J)_x, \phi\rangle\right|\d\tau\\
&\leq T\Big(\|(m^{(k+1)}-m)J^{(k)}\|_{L^\infty_T(B^s_{p,r})}\|\phi\|_{L^\infty_T(B^{-s}_{p',r'})}
+\|m(J^{(k)}-J)_x\|_{L^\infty_T(B^s_{p,r})}\|\phi\|_{L^\infty_T(B^{-s}_{p',r'})}\Big)\\
&\leq C_T \big( \|m^{(k+1)}-m\|_{L^\infty_T(B^s_{p,r})} + \|m^{(k)}-m\|_{L^\infty_T(B^s_{p,r})} \big)\|\phi\|_{L^\infty_T(B^{-s}_{p',r'})}
\rightarrow 0 \text{ as } k\rightarrow \infty,
\end{align*}
which shows that
$\lim\limits_{k\rightarrow\infty} \int_0^t\langle m^{(k)}J_x^{(k)}, \phi\rangle\d\tau=\int_0^t\langle mJ_x, \phi\rangle\d\tau$.
The other terms can be examined by similar computations.
Therefore, for every $t\in [0,T]$, we have
\begin{align*}
\int_0^t\Big(\langle m,\partial_t\phi\rangle+\langle mJ,\partial_x\phi\rangle
+\langle mJ_x,\phi\rangle-\langle Km,\phi\rangle\Big)\d\tau
=\langle m(t),\phi(t)\rangle-\langle m_0,\phi(0)\rangle,
\end{align*}
and consequently $m$ solves equation \eqref{ivp-transport}.
Lemma~\eqref{regularity} then shows that $m$ belongs to
$\mathcal C([0,T];B^{s}_{p,r})$ (respectively $\mathcal C_w([0,T];B^{s-1}_{p,r})$) if $r<\infty$ (respectively $r=\infty$).

Noting that $m_t=Km-Jm_x\in\mathcal C([0,T];B^{s-1}_{p,r})$ (respectively $ L^\infty([0,T];B^{s-1}_{p,r})$) if $r<\infty$ (respectively $r=\infty$),
we conclude that $m\in E^s_{p,r}(T)$.

\subsection{Uniqueness and continuity with respect to the initial data}

Our first goal in this subsection will be to establish uniqueness.

Let $m$ and $\tilde m$ be two solutions to the peakon equation \eqref{m.transporteqn}
with initial data $m_{0}$ and $\tilde m_{0}$:
\begin{align*}
m_{t}+Jm_{x}=Km,
\quad
m|_{t=0}=m_{0},
\\
\tilde m_{t}+\tilde J\tilde m_{x}=\tilde K \tilde m,
\quad
\tilde m|_{t=0}=\tilde m_{0} .
\end{align*}
Taking the difference, we obtain
\begin{align}
(m-\tilde m)_t+J(m-\tilde m )_x=(\tilde J-J)\tilde m_x+Km-\tilde K\tilde m,
\quad
(m-\tilde m)|_{t=0}=m_{0}-\tilde m_{0}.
\end{align}
By computations similar to the ones in the previous subsection,
we can show that, for a.e. $t\in [0,T]$,
\begin{align*}
& e^{-C\gamma(t)}\|(m-\tilde m)(t)\|_{B^{s-1}_{p,r}}\\
& \leq \|m_{0}-\tilde m_{0}\|_{B^{s-1}_{p,r}}+
C\int^t_0 e^{-C\gamma(\tau)}  \|(\tilde J-J)(\tau)\tilde m_x(\tau)+K(\tau)m(\tau)-\tilde K(\tau)\tilde m(\tau)\|_{B^{s-1}_{p,r}}\d \tau
\end{align*}
where
\begin{align*}
\gamma(t) := \int_0^t\|m(\tau)\|^2_{B^{s}_{p,r}}\d\tau .
\end{align*}
Estimates similar to \eqref{Jterms.estimate} and \eqref{Kterms.estimate} yield
\begin{align*}
\|(\tilde J-J)\tilde m_x+Km-\tilde K\tilde m\|_{B^{s-1}_{p,r}}\leq
 \|m-\tilde m\|_{B^{s-1}_{p,r}}\big( \|m\|^2_{B^{s-1}_{p,r}}+\|\tilde m\|^2_{B^{s-1}_{p,r}} \big) .
\end{align*}
According to Gronwall's lemma, we see that, for a.e. $t\in [0,T]$,
\begin{align*}
\|(m-\tilde m)(t)\|_{B^{s-1}_{p,r}}
& \leq \|m_{0}-\tilde m_{0}\|_{B^{s-1}_{p,r}}
\exp\left(C\!\!\int^t_0\left(\|m(\tau)\|^2_{B^{s}_{p,r}}+\|\tilde m(\tau)\|^2_{B^{s}_{p,r}}\right)\d \tau\right)
\\
& \leq \|m_{0}-\tilde m_{0}\|_{B^{s-1}_{p,r}}
\exp\left(
\frac{C^5 T\|m_0\|^2_{B^{s}_{p,r}}}{1-4C^3\|m_0\|_{B^s_{p,r}}^2 T}
+\frac{C^5 T\|\tilde m_0\|^2_{B^{s}_{p,r}}}{1-4C^3\|\tilde m_0\|_{B^s_{p,r}}^2 T}
\right)
\end{align*}
through use of the bound \eqref{Fatou-bound}.
This establishes the uniqueness of solution $m$.

Our final goal will be to prove continuous dependence of $m$ on the initial data $m_0$.
Let $E^{s}_{p,r}(T^*)$ denote the space \eqref{wellposedness}.

For any $q\leq s-1$, since $B^{s-1}_{p,r}\hookrightarrow B^{q}_{p,r}$,
we know
\begin{align}\label{holder1}
\|(m-\tilde m)(t)\|_{B^{q}_{p,r}}\leq C\|(m-\tilde m)(t)\|_{B^{s-1}_{p,r}}.
\end{align}
For $s-1<q<s$, we use the interpolation Lemma~\eqref{interpolation1},
with $q=(s-q)(s-1)+(q+1-s) s$ where $(s-q)+(q+1-s)=1$ and $s-q,\,q+1-s\in (0,1)$,
which gives
\begin{equation}\label{holder2}
\begin{aligned}
\|(m-\tilde m)(t)\|_{B^{q}_{p,r}}&\leq\|(m-\tilde m)(t)\|^{s-q}_{B^{s-1}_{p,r}}\|(m-\tilde m)(t)\|^{q+1-s}_{B^{s}_{p,r}}
\\
&\leq (2C_T)^{q+1-s}\|(m_{0}-\tilde m_{0})(t)\|^{s-q}_{B^{s-1}_{p,r}}e^{2(s-q)C_T\, T}.
\end{aligned}
\end{equation}
These estimates  \eqref{holder1} and \eqref{holder2}
ensure the H\"older continuity of the solution map
from the initial data space $B^s_{p,r}$ to the space $E^{s'}_{p,r}(T)$ for any $s'< s$.
We also need to show that this map is also continuous from $B^s_{p,r}$ to $E^{s}_{p,r}$.
To proceed, we will need to introduce the following lemma
(proved in Ref~\cite{LiY}).

\begin{lemm}\label{allez38}
Let $1\leq p\leq \infty, 1\leq r<\infty, \sigma >1+\frac{1}{p}$ (or $\sigma =1+\frac{1}{p}, r=1, 1\leq p<\infty$).
Denote $\mathbb {\bar N}=\Nnum \cup\{\infty\}$.
Let $\{v^n\}_{n\in \mathbb{\bar N}}\subset \C([0,T];B^{\sigma-1}_{p,r})$.
Assume that $v^n$ is the solution to
\begin{align*}
\partial_t v^n+a^n\partial_xv^n=f,\\
v^n|_{t=0}=v_0,
\end{align*}
with $v_0\in B^{\sigma-1}_{p,r}, f\in L^1(0,T;B^{\sigma-1}_{p,r})$.
Also assume that, for some $g \in L^1(0,T)$,
\begin{align*}
\sup\limits_{n\in  \mathbb {\bar N}}\|a^n\|_{B^\sigma_{p,r}}\leq g(t) .
\end{align*}
If $a^n\rightarrow a^\infty$ in  $L^1(0,T;B^{\sigma-1}_{p,r})$ when $n\rightarrow \infty$,
then
$v^n\rightarrow v^\infty$ in $\C([0,T];B^{\sigma-1}_{p,r})$  when $n\rightarrow \infty$.
\end{lemm}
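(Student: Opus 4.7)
The strategy is to analyze the difference $w^n := v^n - v^\infty$, which satisfies the linear transport problem
\begin{equation*}
\partial_t w^n + a^n\partial_x w^n = (a^\infty - a^n)\partial_x v^\infty,\qquad w^n|_{t=0}=0.
\end{equation*}
Applying the standard a priori estimate for linear transport in $B^{\sigma-1}_{p,r}$ (Lemma~\ref{apriori-estimates}) with transport coefficient $a^n$, whose $B^\sigma_{p,r}$-norm is uniformly dominated by $g\in L^1(0,T)$, and using $w^n|_{t=0}=0$, reduces the claim to showing
\begin{equation*}
\int_0^T \|(a^\infty - a^n)(\tau)\,\partial_x v^\infty(\tau)\|_{B^{\sigma-1}_{p,r}}\,\d\tau\longrightarrow 0 \quad\text{as}\quad n\to\infty,
\end{equation*}
since the Gronwall-type exponential prefactor is bounded uniformly in $n$ by $\exp(C\|g\|_{L^1(0,T)})$.

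The core difficulty is that $v^\infty$ only lies in $\mathcal{C}([0,T];B^{\sigma-1}_{p,r})$, so $\partial_x v^\infty$ is at best in $B^{\sigma-2}_{p,r}$, one derivative short of what a direct Moser product bound in the algebra $B^{\sigma-1}_{p,r}$ (available because $\sigma-1>1/p$, or in the borderline case $r=1$, $\sigma=1+1/p$) would require. My plan is to regularize $v^\infty$ by the low-frequency cut-off $S_j$ and split
\begin{equation*}
(a^\infty-a^n)\partial_x v^\infty = (a^\infty-a^n)\partial_x S_j v^\infty + (a^\infty-a^n)\partial_x (I-S_j)v^\infty.
\end{equation*}
For the first piece, the algebra property combined with Bernstein's inequality $\|\partial_x S_j v^\infty\|_{B^{\sigma-1}_{p,r}}\leq C\,2^j\|v^\infty\|_{B^{\sigma-1}_{p,r}}$ gives the bound $C\,2^j\|v^\infty\|_{L^\infty_T(B^{\sigma-1}_{p,r})}\|a^\infty-a^n\|_{L^1_T(B^{\sigma-1}_{p,r})}$, which tends to $0$ as $n\to\infty$ for each fixed $j$ by hypothesis. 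For the second piece, I would exploit the extra regularity $\|a^n\|_{B^\sigma_{p,r}}\leq g$ through a paraproduct or commutator-type argument (shifting the derivative onto the smoother factor $a^n$) to obtain a bound of the form $C\,g(\tau)\|(I-S_j)v^\infty(\tau)\|_{B^{\sigma-1}_{p,r}}$ that is integrable in $\tau$.

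Given $\varepsilon>0$, I would first choose $j$ large enough that the second-piece contribution is $\leq\varepsilon/2$ uniformly in $n$; this uniform-in-$\tau$ smallness hinges on the compactness of the orbit $\{v^\infty(\tau):\tau\in[0,T]\}$ in $B^{\sigma-1}_{p,r}$ (being the continuous image of the compact interval $[0,T]$) together with the pointwise convergence $(I-S_j)f\to 0$ in $B^{\sigma-1}_{p,r}$ as $j\to\infty$ for every fixed $f$, which is valid precisely because $r<\infty$. Then, for such a fixed $j$, the hypothesis $a^n\to a^\infty$ in $L^1_T(B^{\sigma-1}_{p,r})$ yields $N$ such that the first-piece contribution is below $\varepsilon/2$ whenever $n\geq N$. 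Substituting back into the transport a priori estimate delivers $\|v^n-v^\infty\|_{\mathcal{C}([0,T];B^{\sigma-1}_{p,r})}\to 0$, proving the lemma. The most delicate step in this plan is producing the product estimate for the high-frequency remainder: a direct application of the algebra inequality fails because the derivative sits on the low-regularity factor, so the extra hypothesis $\sup_n\|a^n\|_{B^\sigma_{p,r}}\leq g$ (one derivative above the working space) becomes essential in order to transfer the loss onto $a^n$.
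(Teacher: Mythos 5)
The paper does not actually prove this lemma; it is imported from Ref.~\cite{LiY}, so the only internal point of comparison is the paper's proof of Proposition~\ref{continuity-initial-data}, which uses the decomposition your argument would need. Your opening reduction is correct: the difference $w^n=v^n-v^\infty$ satisfies $\partial_t w^n+a^n\partial_x w^n=(a^\infty-a^n)\partial_x v^\infty$ with zero data, the transport a priori estimate applies with an exponential factor bounded by $e^{C\|g\|_{L^1(0,T)}}$ uniformly in $n$, and you have correctly located the obstruction: $\partial_x v^\infty$ lives only in $B^{\sigma-2}_{p,r}$, so the source term is one derivative short of the space $B^{\sigma-1}_{p,r}$ in which the estimate must close. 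Your low-frequency bound $\|(a^\infty-a^n)\partial_x S_jv^\infty\|_{B^{\sigma-1}_{p,r}}\le C2^j\|a^\infty-a^n\|_{B^{\sigma-1}_{p,r}}\|v^\infty\|_{B^{\sigma-1}_{p,r}}$ is also fine.

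The gap is the high-frequency piece. There is no estimate of the form $\|(a^\infty-a^n)\,\partial_x(I-S_j)v^\infty\|_{B^{\sigma-1}_{p,r}}\le Cg(\tau)\|(I-S_j)v^\infty\|_{B^{\sigma-1}_{p,r}}$: the product of a $B^{\sigma}_{p,r}$ function with a $B^{\sigma-2}_{p,r}$ function lies in $B^{\sigma-2}_{p,r}$ and in general in no better space, because the regularity of a product is dictated by the rougher factor. (Take $p=r=2$, $\sigma=2$: for $b\in H^2$ and $h\in H^1$ one has $\partial_x(b\,\partial_x h)=\partial_x b\,\partial_x h+b\,\partial_x^2 h$, and the second term is only in $H^{-1}$, so $b\,\partial_x h\notin H^1$ in general.) The commutator identity $b\,\partial_x h=\partial_x(bh)-(\partial_x b)h$ does not rescue this, since $\partial_x(bh)$ is still the derivative of a $B^{\sigma-1}_{p,r}$ function; the loss of a derivative in $a\,\partial_x f$ is recoverable by commutator estimates only when that term is the transport term of the equation being estimated, not when it sits as a source. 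The standard repair -- the one in \cite{LiY}, and the one the paper itself deploys in Proposition~\ref{continuity-initial-data} -- is to regularize the solution rather than the source: write $v^n=y^n_j+z^n_j$, where $y^n_j$ solves the transport equation with coefficient $a^n$ but smoothed data $S_jv_0$ and smoothed source $S_jf$, so that $y^n_j$ is bounded in $\C([0,T];B^{\sigma}_{p,r})$ uniformly in $n$ for fixed $j$ and the difference $y^n_j-y^\infty_j$ can legitimately be estimated in $B^{\sigma-1}_{p,r}$ with the source $(a^\infty-a^n)\partial_x y^\infty_j\in B^{\sigma-1}_{p,r}$; meanwhile $z^n_j$ carries the high-frequency data $(I-S_j)v_0$ and source $(I-S_j)f$ and is small in $\C([0,T];B^{\sigma-1}_{p,r})$ uniformly in $n$ by the a priori estimate (here $r<\infty$ is what makes $(I-S_j)v_0\to0$ in $B^{\sigma-1}_{p,r}$). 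Your $\varepsilon$-ordering (first $j$, then $n$) is exactly the right skeleton, but the frequency cut-off must be applied to the data and source of the equation, not inside the product in the source term of the difference equation.
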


Now, for all $n\in\mathbb{\bar N}$,
suppose $m_n\in \C([0,T];B^s_{p,r})$ is the solution to equation \eqref{unifiedfamilyeqn}
with initial data $m_{n,0}\in B^s_{p,r}$:
\begin{align*}
\partial_tm_n+J_{n}\partial_xm_n= K_{n}m_n,\\
m_n(t,x)|_{t=0}=m_{n,0}(x) .
\end{align*}

\begin{prop}\label{continuity-initial-data}
For $1\leq r<\infty$ (or $r=\infty$),
if $m_{n,0}\rightarrow m_{\infty,0}$ in $B^s_{p,r}$ as $ n\rightarrow \infty$,
then $m_n\rightarrow m_\infty$ in $\C([0,T];B^s_{p,r})$  (or $C_w([0,T];B^{s}_{p,r})$).
Here $T$ is a positive number satisfying
\begin{equation*}
4C^3\sup\limits_{n\in\mathbb{\bar N}}\|m_{n,0}\|^2_{B^s_{p,r}}T< 1 .
\end{equation*}
\end{prop}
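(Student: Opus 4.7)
The plan is to bootstrap the H\"older continuity in $\C([0,T]; B^{s'}_{p,r})$ for $s' < s$, which already follows from the estimates \eqref{holder1}--\eqref{holder2}, up to full continuity in $\C([0,T]; B^s_{p,r})$ via a high-frequency splitting argument. For each $N \in \Nnum$, let $y_n^N$ be the solution of the linear transport equation
\begin{align*}
\partial_t y_n^N + J_n \partial_x y_n^N = K_n y_n^N, \quad y_n^N|_{t=0} = S_N m_{n,0},
\end{align*}
with coefficients $J_n, K_n$ coming from $m_n$, and set $z_n^N := m_n - y_n^N$; by linearity, $z_n^N$ solves the same transport equation but with high-frequency initial data $(I - S_N) m_{n,0}$.

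First I would control $z_n^N$ uniformly in $n$. The a priori estimate from Lemma~\ref{apriori-estimates}, combined with the product bound $\|K_n f\|_{B^s_{p,r}} \leq C \|m_n\|^2_{B^s_{p,r}} \|f\|_{B^s_{p,r}}$ (analogous to \eqref{Jx.ReK.estimate}--\eqref{ImK.estimate}) and Gronwall, yields
\begin{align*}
\|z_n^N\|_{L^\infty_T(B^s_{p,r})} \leq C_1 \|(I-S_N) m_{n,0}\|_{B^s_{p,r}} \exp(C_2 C_T^2 T),
\end{align*}
where $C_T$ is the uniform bound from \eqref{mk.bound.CT}. When $r < \infty$, since $m_{n,0} \to m_{\infty,0}$ in $B^s_{p,r}$ makes $\{m_{n,0}\}_{n \in \mathbb{\bar N}}$ relatively compact and $S_N \to I$ strongly on this set, one has $\sup_n \|(I-S_N) m_{n,0}\|_{B^s_{p,r}} \to 0$ as $N \to \infty$. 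Consequently, $z_n^N$ can be made arbitrarily small in $\C([0,T]; B^s_{p,r})$, uniformly in $n$, by choosing $N$ large.

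Next I would show $y_n^N \to y_\infty^N$ in $\C([0,T]; B^s_{p,r})$ for each fixed $N$. Setting $Y_n^N := y_n^N - y_\infty^N$, direct subtraction gives
\begin{align*}
\partial_t Y_n^N + J_n \partial_x Y_n^N = K_n Y_n^N + (J_\infty - J_n) \partial_x y_\infty^N + (K_n - K_\infty) y_\infty^N,
\end{align*}
with $Y_n^N|_{t=0} = S_N(m_{n,0} - m_{\infty,0}) \to 0$ in $B^s_{p,r}$. Choosing any $s'$ with $s-1 \leq s' < s$ in the established H\"older convergence, and using that $J$ and $K$ gain one derivative over $m$ through the inverse Helmholtz operator $(1-\partial_x^2)^{-1}$, one obtains $J_n \to J_\infty$ and $K_n \to K_\infty$ in $\C([0,T]; B^s_{p,r})$. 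Since $S_N m_{\infty,0}$ lies in every Besov space, $y_\infty^N$ is in $L^\infty_T(B^{s+1}_{p,r})$, so the two inhomogeneous forcing terms both tend to zero in $L^1_T(B^s_{p,r})$ by the Moser product estimate of Lemma~\ref{Moser}. Lemma~\ref{allez38} applied with $\sigma = s+1$ (valid since $s > 1/p$), or equivalently the $B^s_{p,r}$ transport estimate combined with Gronwall, then forces $\|Y_n^N\|_{L^\infty_T(B^s_{p,r})} \to 0$.

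The main obstacle is the endpoint case $r = \infty$, where $S_N \to I$ fails to hold strongly in $B^s_{p,\infty}$. Here one only aims for weak-$*$ continuity $m_n \to m_\infty$ in $C_w([0,T]; B^s_{p,\infty})$: pairing against a Schwartz test function and combining the strong convergence already in hand in $B^{s-1}_{p,\infty}$ with the uniform bound $\|m_n\|_{L^\infty_T(B^s_{p,\infty})} \leq C_T$ suffices, via a density argument. Finally, combining the two pieces through the triangle inequality
\begin{align*}
\|m_n - m_\infty\|_{B^s_{p,r}} \leq \|z_n^N\|_{B^s_{p,r}} + \|z_\infty^N\|_{B^s_{p,r}} + \|Y_n^N\|_{B^s_{p,r}},
\end{align*}
and choosing $N$ first to control the $z$-terms before sending $n \to \infty$ for $Y_n^N$, completes the argument.
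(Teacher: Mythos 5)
Your decomposition is genuinely different from the paper's: you mollify the \emph{initial data} in frequency ($S_N m_{n,0}$ vs.\ $(I-S_N)m_{n,0}$) and keep the full zeroth-order term $K_n y_n^N$ in both pieces, whereas the paper splits $m_n=y_n+z_n$ so that $y_n$ carries the \emph{fixed} forcing $f_\infty=K_\infty m_\infty$ and the fixed data $m_{\infty,0}$, and pushes the entire difference $f_n-f_\infty$ into $z_n$. Your treatment of $z_n^N$ (uniform smallness via compactness of $\{m_{n,0}\}$ and Gronwall) and of the endpoint $r=\infty$ is fine. The gap is in the convergence of $y_n^N$.

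Two of your intermediate claims there are unjustified. First, $K_n\to K_\infty$ in $\C([0,T];B^s_{p,r})$ is false (or rather, circular): $K$ contains $\Re(e^{i\theta}Q_x)$ with $Q_x=(u+u_x)\bar m-(\bar u-\bar u_x)m$, so $K$ sits at the \emph{same} regularity level as $m$ and does not gain a derivative through $\Delta^{-1}$. The available convergence $m_n\to m_\infty$ in $B^{s-1}_{p,r}$ only gives $K_n\to K_\infty$ in $B^{s-1}_{p,r}$ (or $B^{s'}_{p,r}$, $s'<s$, by interpolation); convergence in $B^s_{p,r}$ is exactly what you are trying to prove. Second, $y_\infty^N\in L^\infty_T(B^{s+1}_{p,r})$ does not follow from the smoothness of $S_Nm_{\infty,0}$: the zeroth-order coefficient $K_\infty$ is only in $B^s_{p,r}$, so the transport equation cannot propagate regularity above $B^s_{p,r}$ no matter how smooth the data is. Consequently the forcing terms $(K_n-K_\infty)y_\infty^N$ and $(J_\infty-J_n)\partial_xy_\infty^N$ in the equation for $Y_n^N$ are not shown to tend to zero in $L^1_T(B^s_{p,r})$, and the conclusion $Y_n^N\to0$ in $B^s_{p,r}$ does not follow as written. (A smaller point: Lemma~\ref{allez38} assumes the same forcing $f$ and the same data $v_0$ for all $n$, so it does not apply directly to $Y_n^N$ anyway.)

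The missing idea is the Gronwall bootstrap that the paper uses: the pieces of $(K_n-K_\infty)y_\infty^N$ that involve $m_n-m_\infty$ should not be shown to vanish a priori, but instead be \emph{bounded} by $CM^2\|m_n-m_\infty\|_{B^s_{p,r}}$ (with a constant independent of $N$, using $\|y_\infty^N\|_{B^s_{p,r}}\leq CM$), fed back through the triangle inequality $\|m_n-m_\infty\|\leq\|Y_n^N\|+\|z_n^N\|+\|z_\infty^N\|$, and absorbed by Gronwall after $N$ is fixed. With that modification your route can be closed, but as stated the key convergence step fails. The paper's choice of decomposition is designed precisely to confine this difficulty to the $z_n$ equation, where it is handled by exactly this Gronwall argument, while the $y_n$ equation has only the varying coefficient $J_n$, which \emph{does} gain regularity over $m$ and hence converges in the space required by Lemma~\ref{allez38}.
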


\begin{proof}
Let $f_n = K_n m_n$, and decompose $m_n=y_n+z_n$ such that
\begin{align*}
\partial_t y_n+J_{n}\partial_xy_n=f_\infty,
\\
y_n|_{t=0}=m_{\infty,0},
\end{align*}
and
\begin{align*}
\partial_t z_n+J_n\partial_xz_n=f_n-f_\infty,
\\
z_n|_{t=0}=m_{n,0}-m_{\infty,0}.
\end{align*}

When $1\leq r<\infty$,
the bound \eqref{Fatou-bound} and the uniqueness of the solution to equation \eqref{unifiedfamilyeqn}
imply that
\begin{align*}
\|m_n\|_{L^\infty_T(B^s_{p,r})}\leq M
\end{align*}
where
\begin{align*}
M:=
\frac{C^2\sup_{n\in\mathbb{\bar N}}\|m_{n,0}\|_{B^{s}_{p,r}}}
{\sqrt{ 1-4C^3\sup_{n\in\mathbb{\bar N}}\|m_{n,0}\|_{B^{s}_{p,r}}T }}.
\end{align*}
Hence, we see $m_n\in C([0,T];B^s_{p,r})$.
This implies that $u_n\in C([0,T];B^{s+2}_{p,r})$ and $\partial_xu_n\in C([0,T];B^{s+1}_{p,r})$,
and consequently, we have
$K_n= i \Im(e^{i\theta}Q_n)  -\Re(e^{i\theta}Q_{n,x})\in C([0,T];B^s_{p,r})$.
Thus $f_n = K_n m_n$ is uniformly bounded in $\C([0,T];B^s_{p,r})$.

Likewise,
\begin{align*}
\|J_{n}\|_{B^{s+1}_{p,r}}=\|\Re\left(e^{i\theta}(u_n+u_{n,x})(\bar u_n-\bar u_{n,x})\right)\|_{B^{s+1}_{p,r}}
\leq C\|m_n\|^2_{B^s_{p,r}}\leq CM^2.
\end{align*}
Hence, we have
\begin{align*}
\|(J_n-J_\infty)(t)\|_{B^{s+1}_{p,r}}
\leq C\|(m_n-m_\infty)(t)\|_{B^{s-1}_{p,r}}.
\end{align*}
Since $s-1<s$,
the estimate \eqref{holder1} gives
$m_n\rightarrow m_\infty$ in $L^1(0,T;B^{s-1}_{p,r})$,
and therefore $J_n\rightarrow J_\infty$ in $L^1(0,T;B^{s+1}_{p,r})$.
Then by Lemma~\ref{allez38} with $\sigma=s+1$, we obtain
\begin{equation}\label{allez16}
y_n\rightarrow m_\infty \text{ in $\C([0,T];B^s_{p,r})$ as $n\rightarrow \infty$}.
\end{equation}

To control $z_n$,  we need to estimate  $ f_n-f_\infty$:
\begin{align*}
\|f_n-f_\infty\|_{B^{s}_{p,r}}&=\|K_{n}m_n-K_{\infty}m_\infty\|_{B^s_{p,r}}\\
&\leq C(\|m_n\|^2_{B^s_{p,r}}+\|m_\infty\|^2_{B^s_{p,r}})\|m_n-m_\infty\|_{B^s_{p,r}}.
\end{align*}
Applying Lemmas  \ref{apriori-estimates}, \ref{Luowei} in Appendix,
and using computations similar to previous ones, we have
\begin{align*}
\|z_n\|_{B^s_{p,r}}&\leq e^{C\!\!\int^t_0\|m_n\|^2_{B^s_{p,r}}\d t'}\Big(\|m_{n,0}-m_{\infty,0}\|_{B^s_{p,r}}+C\int^t_0(\|m_n\|^2_{B^s_{p,r}}
+\|m_\infty\|^2_{B^s_{p,r}})(\|m_n-m_\infty\|_{B^s_{p,r}})\d t' \Big)\\
&\leq C M^2e^{CM^2T}\Big(\|m_{n,0}-m_{\infty,0}\|_{B^s_{p,r}}
+\int^t_0\|m_n-m_\infty\|_{B^s_{p,r}}\d t' \Big).
\end{align*}
From \eqref{allez16}, for any $\varepsilon >0$,
we can choose a sufficiently large $n$ such that
\begin{align*}
\|y_n-m_\infty\|_{B^s_{p,r}} < \varepsilon .
\end{align*}
Thus,
\begin{align*}
\|m_n-m_\infty\|_{B^s_{p,r}}
&
\leq \|z_n\| +\|y_n-m_\infty\|
\\&
\leq
\varepsilon + C M^2e^{CM^2T}
\Big(\|m_{n,0}-m_{\infty,0}\|_{B^s_{p,r}}
+\int^t_0\|m_n(\tau) -m_\infty(\tau)\|_{B^s_{p,r}}\d\tau \Big).
\end{align*}
We now use Gronwall's inequality to get
\begin{align*}
\|(m_n-m_\infty)(t)\|_{B^s_{p,r}}\leq \tilde C\Big(\varepsilon + \|m_{n,0}-m_{\infty,0}\| _{B^s_{p,r}}\Big)\quad \text{for a.e.}~  t\in[0,T],
\end{align*}
for some constant $\tilde C =\tilde C(s,p,R ,M,T)$.
This establishes the continuity of  \eqref{unifiedfamilyeqn} in $C([0,T];B^{s}_{p,r})$
with respect to the initial data in $B^s_{p,r}$ for $r<\infty$.

When $r=\infty$, by the inequality \eqref{holder1}, we see that  $\|m_n-m_\infty\|_{L_T^\infty(B^{s-1}_{p,r})}$
tends to $0$ as $n\rightarrow \infty$. Hence  for fixed $\phi \in B^{-s}_{p',1}$, we have
\begin{align*}
\langle m_n-m_\infty, \phi\rangle&=
\langle S_j[m_n-m_\infty], \phi\rangle-\langle (\mathrm{Id}-S_j)[m_n-m_\infty], \phi\rangle\\
&=\langle m_n-m_\infty, S_j\phi\rangle+\langle m_n-m_\infty, (\mathrm{Id}-S_j)\phi\rangle.
\end{align*}
Applying Proposition~\ref{testfunction} in the Appendix, we have
\begin{equation}\label{allez13}
|  \langle m_n-m_\infty, (\mathrm{Id}-S_j)\phi\rangle|
 \leq  2CM\|\phi-S_j\phi\|_{B^{-s}_{p',1}},
\end{equation}
and
\begin{equation}\label{allez14}
| \langle m_n-m^\infty, S_j\phi\rangle|
\leq  C\|m_n-m_\infty\|_{L_T^\infty (B^{s-1}_{p,r})}\|S_j\phi\|_{B^{1-s}_{p',1}}.
\end{equation}
Note that $\|\phi-S_j\phi\|_{B^{-s}_{p',1}}$ tends to zero as $j\rightarrow \infty$
and that $\|m_n-m_\infty\|_{L_T^\infty (B^{s-1}_{p,r})}$ tends to zero as $n\rightarrow \infty$.
Then the right hand-side of  \eqref{allez13} will be arbitrarily small  for $j$ large enough.
For such fixed $j$, we let $n$ go to infinity so that the right hand-side of \eqref{allez14} tends zero.
Thus, we conclude that $ \langle m_n(t)-m_\infty(t), \phi\rangle$ tends to zero as $n\rightarrow \infty$
for the case $r=\infty$.
\end{proof}

\section{$L^1$ conservation law}\label{sec:L1conservation}

The $L^1$ conservation law \eqref{L1.conservation} in Theorem~\ref{thm:L1conservation}
is a generalization of the same conservation law known \cite{CheLiuQuZha} for the FORQ/mCH equation,
and it can be proved by a similar method.

The main idea is to make a change of variables in the $L^1$ norm by means of
an increasing diffeomorphism of $\Rnum$ that arises from viewing $J$
in the transport form \eqref{m.transporteqn} of the peakon equation \eqref{unifiedfamilyeqn}
as a velocity field.
Thus, we begin by considering the initial value problem
\begin{equation}\label{ODE}
\begin{aligned}
& h_t(t,x)=J(t,h(t,x)),
\quad
0<t<T,
\\
& h(0,x)=x,
\quad
x\in\Rnum,
\end{aligned}
\end{equation}
where $J$ is expression \eqref{J} for the solution $u$ of the peakon equation
with initial data $m_0\in H^s$, with $s> \frac{1}{2}$,
and $T$ is the existence time of $u$.

From classical results in ODE theory,
the following properties of $h(t,x)$ can be readily proven.

\begin{lemm}\label{lem:incr.diffeo}
The ODE problem \eqref{ODE} has a unique solution $h\in\C^1([0,T)\times\Rnum)$,
with
\begin{align}\label{characteristic}
h_x(t,x)=\exp\left(\int_0^t J_x(\tau,h(\tau,x))\,\d\tau\right) >0 .
\end{align}
The resulting map $h(t,\cdot)$ is an increasing diffeomorphism of $\Rnum$
for all $t\in[0,T)$.
\end{lemm}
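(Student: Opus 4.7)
The plan is to verify that the velocity field $J$ has enough regularity to feed into classical Cauchy--Lipschitz theory, then extract the formula for $h_x$ by differentiating the flow equation in $x$, and finally use positivity and sub-linear growth to upgrade to a diffeomorphism of $\Rnum$.

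First I would record the regularity of $J$. Since $m \in H^s$ with $s > 1/2$, the smoothing relation $u = (1-\partial_x^2)^{-1} m$ gives $u, u_x \in H^{s+1}$, so by the algebra property of $H^s$ for $s > 1/2$ we obtain $Q = (u+u_x)(\bar u - \bar u_x) \in H^{s+1}$, and therefore $J = \Re(e^{i\theta} Q)$ and $J_x$ both lie in $H^s \hookrightarrow L^\infty$. In particular, $J(t,\cdot)$ is globally Lipschitz in $x$, uniformly on any compact subinterval of $[0,T)$, with Lipschitz constant bounded by $\|J_x(t,\cdot)\|_{L^\infty}$. This is precisely the hypothesis of the Cauchy--Lipschitz (Picard--Lindel\"of) theorem applied parameter-wise in $x$, which produces a unique $C^1$-in-$t$ solution $h(t,x)$ defined on $[0,T)$ for every $x \in \Rnum$. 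Joint $C^1$ regularity in $(t,x)$ then follows from standard smooth dependence of ODE solutions on initial data.

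Next I would derive formula \eqref{characteristic}. Differentiating $h_t(t,x) = J(t, h(t,x))$ formally in $x$ yields
\begin{equation*}
\partial_t h_x(t,x) = J_x(t, h(t,x))\, h_x(t,x),
\qquad h_x(0,x) = 1 .
\end{equation*}
Viewing this as a linear scalar ODE in $t$ with continuous coefficient $\tau \mapsto J_x(\tau, h(\tau,x))$, the explicit solution is exactly the claimed exponential. The rigorous justification of differentiability in $x$ comes from the smooth dependence result mentioned above, which guarantees that $h_x$ exists, is continuous, and satisfies the variational equation. Since the exponential is strictly positive, $h_x > 0$ throughout $[0,T) \times \Rnum$, hence $h(t,\cdot)$ is strictly increasing.

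Finally, to upgrade monotonicity to a diffeomorphism of $\Rnum$, I would bound
\begin{equation*}
|h(t,x) - x| \leq \int_0^t |J(\tau, h(\tau,x))|\,\d\tau \leq \int_0^t \|J(\tau,\cdot)\|_{L^\infty}\,\d\tau =: M(t) < \infty ,
\end{equation*}
uniformly in $x$, which forces $h(t,x) \to \pm\infty$ as $x \to \pm\infty$. Combined with continuity and strict monotonicity in $x$, the intermediate value theorem shows $h(t,\cdot): \Rnum \to \Rnum$ is a continuous bijection, and the inverse function theorem (using $h_x > 0$) gives a $C^1$ inverse. The main subtle point I expect is ensuring $J_x$ is bounded in $L^\infty$ so that both the Lipschitz hypothesis and the continuous dependence result apply on the full existence interval $[0,T)$; this is handled by the Sobolev embedding $H^s \hookrightarrow L^\infty$ for $s > 1/2$ together with the fact that $m \in \C([0,T); H^s)$ from Theorem~\ref{thm:wellposedness}. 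The remainder of the argument is a routine application of ODE theory.
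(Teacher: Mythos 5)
Your proposal is correct and follows essentially the same route as the paper: establish that $J\in H^{s+1}\hookrightarrow C^{0,1}$ via the smoothing of $(1-\partial_x^2)^{-1}$ and the Sobolev algebra property, invoke Cauchy--Lipschitz for existence and uniqueness of $h$, and obtain \eqref{characteristic} by differentiating the flow equation in $x$ and solving the resulting linear ODE. You additionally spell out the surjectivity of $h(t,\cdot)$ via the uniform bound $|h(t,x)-x|\le\int_0^t\|J(\tau,\cdot)\|_{L^\infty}\,\d\tau$, a detail the paper leaves implicit, which is a welcome completion rather than a different argument.
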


\begin{proof}
For $m_0\in H^s$,
we have $m\in\C([0,T);H^s)\cap \C^1([0,T);H^{s-1})$
since $s> \frac{1}{2}$,
and hence $u\in\C([0,T);H^{s+2})\cap \C^1([0,T);H^{s+1})$.
Therefore, $J$ is in $\C([0,T);H^{s+1})\cap \C^1([0,T);H^{s})$,
which is contained in the space of Lipschitz functions $\C([0,T);\C^{0,1})$.
According to ODE theory,
this implies there is a unique solution $h\in\C^1([0,T)\times\Rnum)$.

Differentiating \eqref{ODE} with respect to $x$ yields
\begin{equation}
\begin{aligned}
& h_{xt}(t,x)= J_x(t,h(t,x))h_x(t,x),
\quad
0<t<T,
\\
& h_x(0,x)=1,
\quad
x\in\Rnum.
\end{aligned}
\end{equation}
Solving this ODE problem with $h_x$ as the unknown function,
we obtain \eqref{characteristic}.
\end{proof}

We next show that the solution $m$ to the peakon equation \eqref{unifiedfamilyeqn}
along the line defined by $h(\cdot,x)$ is an oscillatory function of $t$.

\begin{lemm}\label{lem:m.along.h}
Let $m_0\in H^s$, with $s>\frac{1}{2}$,
and let $h(t,x)$ be the solution to the ODE problem \eqref{ODE}.
Let $T>0$ be the existence time of the solution $m(t,x)$ to the peakon equation \eqref{unifiedfamilyeqn}.
Then for $0\leq t<T$,
\begin{equation}\label{m.along.h}
m(t,h(t,x))=m_0(x)\exp\left(\int_0^t K(\tau,h(\tau,x))\,\d\tau\right)
\end{equation}
where $K$ is expression \eqref{K}.
\end{lemm}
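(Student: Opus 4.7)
\medskip

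\noindent\textbf{Proof plan for Lemma~\ref{lem:m.along.h}.}
The plan is to compose $m$ with the characteristic curve $h(\cdot,x)$ and turn the transport equation \eqref{m.transporteqn} into a linear scalar ODE in $t$. Concretely, I define the function
\begin{equation*}
\Phi(t,x) := m(t,h(t,x)),\qquad 0\le t<T,\ x\in\Rnum,
\end{equation*}
and aim to show that $t\mapsto \Phi(t,x)$ satisfies
\begin{equation*}
\frac{\d}{\d t}\Phi(t,x) = K(t,h(t,x))\,\Phi(t,x),\qquad \Phi(0,x)=m_0(x),
\end{equation*}
after which the claimed formula \eqref{m.along.h} follows by integrating this linear ODE in $t$.

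The key step is the chain-rule computation. Using $h_t(t,x)=J(t,h(t,x))$ from \eqref{ODE},
\begin{equation*}
\frac{\d}{\d t}\Phi(t,x) = m_t(t,h(t,x)) + m_x(t,h(t,x))\,h_t(t,x)
= \big(m_t + J m_x\big)(t,h(t,x)) = K(t,h(t,x))\,m(t,h(t,x)),
\end{equation*}
where the last equality is just the transport form \eqref{m.transporteqn}. The initial condition $\Phi(0,x)=m(0,h(0,x))=m_0(x)$ is immediate from $h(0,x)=x$.

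The main obstacle is making the chain rule rigorous at the regularity level of Theorem~\ref{thm:wellposedness}: with $m_0\in H^s$ and $s>\tfrac12$, we only have $m\in\C([0,T);H^s)\cap \C^1([0,T);H^{s-1})$, so $m_t$ and $m_x$ live in $H^{s-1}$ and are not a priori pointwise-defined. To handle this, I would combine two ingredients. First, the Sobolev embedding $H^s\hookrightarrow L^\infty$ for $s>\tfrac12$ gives that $u$, $u_x$, hence $J$ and $K$, are continuous in $x$, and $h\in\C^1([0,T)\times\Rnum)$ by Lemma~\ref{lem:incr.diffeo}, so the right-hand side $K(t,h(t,x))\,m(t,h(t,x))$ makes classical sense. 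Second, I would justify \eqref{m.along.h} by a standard mollification/approximation argument: apply a spatial mollifier $m^\ep := \rho_\ep * m$, for which the chain rule is genuinely classical, derive the identity
\begin{equation*}
\frac{\d}{\d t}m^\ep(t,h(t,x)) = K(t,h(t,x))\,m^\ep(t,h(t,x)) + r^\ep(t,x),
\end{equation*}
with a commutator-type remainder $r^\ep$ involving $[\rho_\ep *, J\partial_x]m$, and then pass to the limit $\ep\downarrow 0$ using standard commutator estimates (as used already in Section~\ref{sec:wellposedness}) to see that $r^\ep\to 0$ uniformly on $[0,t]\times\Rnum$. The resulting limiting ODE is exactly the one above, and its unique solution is \eqref{m.along.h}.
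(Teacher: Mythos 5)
Your proposal is correct and follows essentially the same route as the paper: differentiate $m(t,h(t,x))$ along the characteristic via the chain rule, use the transport form \eqref{m.transporteqn} to obtain the linear ODE $\frac{\d}{\d t}\Phi = K\Phi$, and integrate with the integrating factor. The paper performs the chain-rule step formally without the mollification justification you sketch, so your additional commutator/approximation discussion is a more careful treatment of the same argument rather than a different one.
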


\begin{proof}
Differentiating $m(t,h(t,x))$ with respect to $t$,
and using equations \eqref{unifiedfamilyeqn} and \eqref{ODE},
we get
\begin{equation}\label{mt.on.h}
\begin{aligned}
\frac{\d}{\d t}\big(m(t,h(t,x))\big)
&=m_t(t,h(t,x)) +m_x(t,h(t,x)) J(t,h(t,x)) \\
&=K(t,h(t,x)) m(t,h(t,x)) .
\end{aligned}
\end{equation}
Next we multiply \eqref{mt.on.h} by the integrating factor
$e^{-\int_0^t K(\tau,h(\tau,x))\,\d\tau}$
and integrate starting at $t=0$ with the initial condition
$m(0,h(0,x))= m(0,x)=m_0(x)$.
This yields \eqref{m.along.h}.
\end{proof}

\begin{rema}
Relation \eqref{m.along.h} shows that the solution has temporal oscillatory properties
since $K$ has a non-vanishing imaginary part.
It also can be used to establish finite propagation properties
for the peakon equation \eqref{unifiedfamilyeqn}.
\end{rema}

\subsection{Proof of Theorem~\ref{thm:L1conservation}}

Now we can prove conservation of the $L^1$ norm of $m$
via a change of variables using the increasing diffeomorphism $h(t,\cdot)$ for all $0\leq t<T$.

First, we have
\begin{align*}
m(t,h(t,x))h_x(t,x)
& = m_0(x)\exp\left(\int_0^t K(\tau,h(\tau,x))\,\d\tau\right)\exp\left(\int_0^t J_x(\tau,h(\tau,x))\,\d\tau\right)\\
& = m_0(x)\exp\left(i\int_0^t \Im(e^{i\theta}Q(\tau,h(\tau,x)))\,\d\tau\right)
\end{align*}
from relation \eqref{m.along.h} and equation \eqref{characteristic},
combined with expression \eqref{K}.
Thus,
$|m(t,h(t,x))h_x(t,x)|= |m_0(x)|$
by $U(1)$ invariance.
This allows the $L^1$ norm to be evaluated as
\begin{align*}
\|m\|_{L^1} = \int_{\Rnum}|m(t,x)|\d x
& =\int_{\Rnum}|m(t,h(t,x))h_x(t,x)|\d x\\
& =\int_{\Rnum}|m_0(x)|\d x
=\|m_0\|_{L^1} .
\end{align*}
Hence, Theorem~\ref{thm:L1conservation} is proved.

\section{Blow-up criterion}\label{sec:blowupcriteria}

We start by introducing the following blow-up criterion of integral form.

\begin{lemm}\label{lem:integral.blowup.criterion}
Let $m_0\in B^s_{p,r}$ with $ 1\leq p,r \leq\infty$ and $s>\max(\frac{1}{2},\frac{1}{p})$.
If the maximal existence time $T>0$ of
the corresponding solution $m$ to the peakon equation \eqref{unifiedfamilyeqn}
is finite, then
\begin{align*}
\int^T_0\|m(t)\|^2_{L^\infty}\d t=\infty .
\end{align*}
\end{lemm}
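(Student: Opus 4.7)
The plan is to argue by contradiction: suppose $T<\infty$ is the maximal existence time yet $\int_0^T \|m(t)\|_{L^\infty}^2\,\d t<\infty$. I will show this forces $\|m(t)\|_{B^s_{p,r}}$ to remain uniformly bounded on $[0,T)$, so that Theorem~\ref{thm:wellposedness} applied with initial datum $m(t_0)$ at a suitably late $t_0<T$ produces a solution whose lifespan strictly exceeds $T-t_0$, contradicting maximality.

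The core task is to bound the coefficients $J$ and $K$ in the transport form \eqref{m.transporteqn} in such a way that each appearance of a norm stronger than $L^\infty$ is compensated by a factor of $\|m\|_{L^\infty}$. Since $u=G*m$ with $G(x)=\tfrac{1}{2}e^{-|x|}\in L^1$ and likewise $u_x=G_x*m$ with $G_x\in L^1$, Young's inequality gives $\|u\|_{L^\infty}+\|u_x\|_{L^\infty}\leq C\|m\|_{L^\infty}$; combined with the algebraic expressions \eqref{Q}--\eqref{Qx}, this yields
\begin{equation*}
\|J\|_{L^\infty}+\|\partial_x J\|_{L^\infty}+\|K\|_{L^\infty}\leq C\|m\|_{L^\infty}^2 .
\end{equation*}
On the Besov side, $(1-\partial_x^2)^{-1}$ is an $S^{-2}$ Fourier multiplier, giving $\|u\|_{B^s_{p,r}}+\|u_x\|_{B^s_{p,r}}\leq C\|m\|_{B^s_{p,r}}$, while the Moser-type bilinear estimate
\begin{equation*}
\|fg\|_{B^s_{p,r}}\leq C\bigl(\|f\|_{L^\infty}\|g\|_{B^s_{p,r}}+\|g\|_{L^\infty}\|f\|_{B^s_{p,r}}\bigr),
\end{equation*}
valid for $s>\max(\tfrac{1}{2},\tfrac{1}{p})$, upgrades the pointwise control to
\begin{equation*}
\|Q\|_{B^s_{p,r}}+\|Q_x\|_{B^s_{p,r}}+\|K\|_{B^s_{p,r}}\leq C\|m\|_{L^\infty}\|m\|_{B^s_{p,r}}, \qquad \|Km\|_{B^s_{p,r}}\leq C\|m\|_{L^\infty}^2\|m\|_{B^s_{p,r}} .
\end{equation*}

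Feeding these into the linear transport a priori estimate (Lemmas~\ref{apriori-estimates} and~\ref{Luowei}) applied to \eqref{m.transporteqn}, the source $\|Km\|_{B^s_{p,r}}$ together with the contributions $\|\partial_x J\|_{L^\infty}\|m\|_{B^s_{p,r}}$ and $\|\partial_x J\|_{B^{s-1}_{p,r}}\|m\|_{L^\infty}$ all admit the common envelope $C\|m\|_{L^\infty}^2\|m\|_{B^s_{p,r}}$; Gronwall's inequality then yields
\begin{equation*}
\|m(t)\|_{B^s_{p,r}}\leq \|m_0\|_{B^s_{p,r}}\exp\left(C\int_0^t\|m(\tau)\|_{L^\infty}^2\,\d\tau\right), \qquad t\in[0,T) .
\end{equation*}
The integral in the exponent is finite by assumption, so $\sup_{t<T}\|m(t)\|_{B^s_{p,r}}\leq M<\infty$. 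Choosing $t_0<T$ with $T-t_0<C/M^2$ and invoking Theorem~\ref{thm:wellposedness} with initial datum $m(t_0)$ produces a continuation of the solution beyond $T$, the desired contradiction. The main obstacle is establishing the $L^\infty$-factorized Besov product estimates for $Q$, $Q_x$ and $Km$ in a unified way that covers all indices $s>\max(\tfrac{1}{2},\tfrac{1}{p})$; once those are in place, the argument reduces to standard transport theory and Gronwall.
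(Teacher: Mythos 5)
Your proposal is correct and follows essentially the same route as the paper: contradiction, the $L^\infty$-factorized Moser estimates $\|K\|_{L^\infty},\|J_x\|_{L^\infty}\leq C\|m\|_{L^\infty}^2$ and $\|K\|_{B^s_{p,r}},\|J_x\|_{B^s_{p,r}}\leq C\|m\|_{L^\infty}\|m\|_{B^s_{p,r}}$ fed into the linear transport a priori estimate, then Gronwall and continuation. The only cosmetic difference is that you make the continuation step explicit via the lifespan lower bound $T^*\geq C/\|m(t_0)\|_{B^s_{p,r}}^2$, which the paper leaves implicit.
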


\begin{proof}
Argue by contradiction.
Suppose $\int_0^T\|m\|_{L^\infty}^2\d t<\infty$.
Noting $s>\max(1/2,1/p)>0$ and applying Lemma \ref{LiYin2},
we can obtain, for $\forall ~t\in[0,T),$
\begin{align*}
\|m(t)\|_{B^s_{p,r}}\leq \|m_0\|_{B^s_{p,r}}
+\int_0^t \|Km(\tau)\|_{B^s_{p,r}}\d \tau+C\int_0^t \big(\|m(\tau)\|_{B^s_{p,r}}\|J_x\|_{L^\infty}+
\|m(\tau)\|_{L^\infty}\|J_x\|_{B^s_{p,r}}\big)\d\tau.
\end{align*}
According to the product laws in Lemma (A.6) (1), we can see that
\begin{gather*}
\|Km\|_{B^s_{p,r}} \leq C(\|K\|_{L^\infty}\|m\|_{B^s_{p,r}}+\|K\|_{B^s_{p,r}}\|m\|_{L^\infty}),
\\
\|K\|_{L^\infty} \leq C\|m\|_{L^\infty}^2,
\quad
\|K\|_{B^s_{p,r}} \leq C\|m\|_{L^\infty}\|m\|_{B^s_{p,r}},
\\
\|J_x\|_{L^\infty} \leq C\|m\|_{L^\infty}^2,
\quad
\|J_x\|_{B^s_{p,r}} \leq C\|m\|_{L^\infty}\|m\|_{B^s_{p,r}}.
\end{gather*}
Therefore, we have
\begin{align*}
\|m(t)\|_{B^s_{p,r}}\leq \|m_0\|_{B^s_{p,r}}
+C\int_0^t \|m(\tau)\|_{B^s_{p,r}}\|m\|^2_{L^\infty}\d \tau.
\end{align*}
Gronwall's inequality then leads to
\begin{align*}
\|m(t)\|_{B^s_{p,r}}\leq \|m_0\|_{B^s_{p,r}}\exp\left( C\int_0^t \|m\|^2_{L^\infty}\d \tau
\right)<\infty
\end{align*}
for all $t\in[0,T)$.
Thus we can extend the solution $m$ beyond the maximal $T$, which is a contradiction.
\end{proof}

Combining this Lemma and Theorem~\ref{thm:wellposedness},
we readily obtain the following pointwise blow-up criterion.

\begin{coro}\label{cor:pointwise.blowup.criterion}
Let $ m_0 \in B^s_{p,r}$ with $ 1\leq p,r \leq\infty$ and $s>\max(\frac{1}{2},\frac{1}{p})$.
Let $T > 0 $ be the maximal existence time of the corresponding solution $m$ to the peakon equation \eqref{unifiedfamilyeqn}.
Then the solution blows up in finite time if and only if
\begin{equation}\label{pointwise.blowup.criterion}
\limsup\limits_{ t\uparrow T} \|m(t)\|_{L^\infty} = \infty .
\end{equation}
\end{coro}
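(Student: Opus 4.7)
The plan is to derive this corollary as a direct consequence of Lemma~\ref{lem:integral.blowup.criterion} combined with the Besov embedding $B^s_{p,r}\hookrightarrow L^\infty$, which is valid in one space dimension under the hypothesis $s>\max(\tfrac{1}{2},\tfrac{1}{p})\geq \tfrac{1}{p}$. Via this embedding and the continuity statement of Theorem~\ref{thm:wellposedness}, the map $t\mapsto \|m(t)\|_{L^\infty}$ is continuous, and in particular bounded, on every compact subinterval of $[0,T)$. With these two ingredients in hand, the content of the corollary reduces to a routine contradiction argument against the integral blow-up criterion.

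For the forward direction $(T<\infty \Rightarrow \limsup_{t\uparrow T}\|m(t)\|_{L^\infty}=\infty)$, I would argue by contradiction. Assume $T<\infty$ but $\limsup_{t\uparrow T}\|m(t)\|_{L^\infty}\leq M$ for some $M<\infty$. Then there exists $\delta>0$ such that $\|m(t)\|_{L^\infty}\leq M+1$ for all $t\in[T-\delta,T)$, while on the compact subinterval $[0,T-\delta]$ the norm $\|m(t)\|_{L^\infty}$ is bounded by some constant $M'$ via the continuity noted above. Concatenating these bounds yields $\sup_{t\in[0,T)}\|m(t)\|_{L^\infty}<\infty$, and consequently
\begin{equation*}
\int_0^T \|m(t)\|_{L^\infty}^2\,\d t \;\leq\; T\,(M+M'+1)^2 \;<\; \infty.
\end{equation*}
This contradicts the conclusion of Lemma~\ref{lem:integral.blowup.criterion}, so the limsup must in fact be infinite.

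The reverse direction $(\limsup_{t\uparrow T}\|m(t)\|_{L^\infty}=\infty \Rightarrow T<\infty)$ is essentially automatic from the definition of the maximal existence time. If $\|m(t)\|_{L^\infty}$ becomes unbounded approaching $T$, then $m$ cannot be continuously extended as a Besov solution up to and beyond $T$, since any such extension would, by the embedding, produce a finite $L^\infty$ bound at $t=T$ and then allow a further extension past $T$ by the local well-posedness of Theorem~\ref{thm:wellposedness}, contradicting the maximality of $T$. Hence $T$ is necessarily a finite blow-up time. No genuine obstacle arises in either direction; the main step is simply recognizing that the integral criterion of Lemma~\ref{lem:integral.blowup.criterion}, once paired with the embedding $B^s_{p,r}\hookrightarrow L^\infty$, forces pointwise blow-up of the $L^\infty$ norm.
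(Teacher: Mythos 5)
Your argument is correct and follows exactly the route the paper intends: the paper offers no written proof beyond the remark that the corollary follows by ``combining this Lemma and Theorem~\ref{thm:wellposedness}'', which is precisely your combination of the integral criterion of Lemma~\ref{lem:integral.blowup.criterion} with the embedding $B^s_{p,r}\hookrightarrow L^\infty$ and the continuity of $t\mapsto\|m(t)\|_{B^s_{p,r}}$ on compact subintervals of $[0,T)$. Your reverse direction is the standard continuation argument and carries the same (harmless, conventional) looseness as the statement itself, so nothing further is needed.
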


\subsection{Proof of Theorem~\ref{thm:criterion}}

Now we can prove the precise blow-up criterion stated in Theorem~\ref{thm:criterion}.

Suppose $T$ is finite,
and assume for contradiction that the blow-up criterion \eqref{blowup.criterion}
does not hold.
Then there exists a constant $C\in \Rnum$ such that
\begin{equation}\label{Jx.condition}
J_x(t,x)\geq -C,
\quad
\forall (t,x)\in [0,T)\times \Rnum,
\end{equation}
where $J$ is given expression \eqref{J}.
This inequality yields the bound
\begin{equation}\label{Jx.bound}
\exp\left(-\int^t_0 J_x(\tau,h(\tau,x))\d\tau \right)
\leq e^{CT}.
\end{equation}

Using relation \eqref{m.along.h} in Lemma~\ref{lem:m.along.h},
and substituting expression \eqref{K},
we obtain
\begin{align*}
|m(t,h(t,x))| &=
\bigg|m_0(x)\exp\left( i\int_0^t\Im (e^{i\theta}Q(\tau,h(\tau,x))) \d\tau\right)
\exp\left(-\int_0^t J_x(\tau,h(\tau,x)) \d\tau\right)\bigg| \\
&\leq \|m_0\|_{L^\infty}\,e^{CT}
\end{align*}
due to \eqref{Jx.bound} and $U(1)$ invariance.
The embedding property $H^s\hookrightarrow L^{\infty}$ when $s>\frac{1}{2}$
then yields
\begin{align*}
\|m(t)\|_{L^\infty}\leq C_1 \|m_0\|_{B^s_{p,r}} e^{CT}
\end{align*}
for some positive constant $C_1$.
This shows $\limsup\limits_{ t\uparrow T} \|m(t)\|_{L^\infty}$ is finite,
and therefore $m$ does not blow up in finite time
according to Corollary~\ref{cor:pointwise.blowup.criterion}.

Hence, the inequality \eqref{Jx.condition} cannot hold if $m$ blows up in finite time,
which establishes Theorem~\ref{thm:criterion}.

\section{Blow-up phenomenon}\label{sec:blowup}

The blow-up phenomenon stated in Theorem~\ref{thm:blowup} is obtained
from adapting the transport method used for the FORQ/mCH equation in \Ref{CheLiuQuZha}.
This method relies on
the blow-up criterion \eqref{blowup.criterion} from Theorem~\ref{thm:criterion},
the $L^1$ conservation law \eqref{L1.conservation},
and the following basic estimates.

\begin{lemm}\label{lem:u.ux.estimate}
Let $m_0\in H^s\cap L^1(\Rnum)$ with $s>\frac{1}{2}$.
Suppose that $T$ is the maximal existence time of the corresponding solution $m$
to the peakon equation \eqref{unifiedfamilyeqn}.
Then for all $t\in [0, T)$:
\begin{equation}\label{u.ux.estimate}
\begin{aligned}
|u(t,x)|\leq  \tfrac{1}{2} \|m_0\|_{L^1},
\quad
|u_x(t,x)|\leq  \tfrac{1}{2} \|m_0\|_{L^1} .
\end{aligned}
\end{equation}
\end{lemm}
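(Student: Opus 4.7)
The plan is to represent $u$ via the Green's function of the Helmholtz operator $1-\partial_x^2$ on $\Rnum$ and then apply the $L^1$ conservation law from Theorem~\ref{thm:L1conservation}. Since $m=u-u_{xx}$ and $u$ decays at infinity (given the regularity $u\in H^{s+2}$), we have
\begin{equation*}
u(t,x)=\tfrac{1}{2}\int_{\Rnum}e^{-|x-y|}m(t,y)\,\d y,
\quad
u_x(t,x)=-\tfrac{1}{2}\int_{\Rnum}\sgn(x-y)\,e^{-|x-y|}m(t,y)\,\d y.
\end{equation*}
The kernels $\tfrac{1}{2}e^{-|x-y|}$ and $\tfrac{1}{2}\sgn(x-y)e^{-|x-y|}$ are both bounded by $\tfrac{1}{2}$ in absolute value.

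The next step is to estimate pointwise by bringing the modulus inside the integral:
\begin{equation*}
|u(t,x)|\leq \tfrac{1}{2}\int_{\Rnum}e^{-|x-y|}|m(t,y)|\,\d y \leq \tfrac{1}{2}\|m(t)\|_{L^1},
\end{equation*}
and analogously for $u_x$ using $|\sgn(x-y)e^{-|x-y|}|\leq 1$. Finally, I would invoke Theorem~\ref{thm:L1conservation}, which guarantees $\|m(t)\|_{L^1}=\|m_0\|_{L^1}$ for all $t\in[0,T)$ (noting that $m_0\in H^s\cap L^1$ with $s>\tfrac{1}{2}$ is precisely the hypothesis needed). Substituting yields the two claimed bounds.

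There is no real obstacle here: the argument is a direct consequence of the convolution representation and conservation of $\|m\|_{L^1}$. The only point requiring care is justifying the Green's function formula, which follows from $u\in H^{s+2}\hookrightarrow C^1$ decaying at infinity so that $(1-\partial_x^2)^{-1}$ acts as convolution with $\tfrac{1}{2}e^{-|x|}$; this is standard and does not need elaboration in the proof.
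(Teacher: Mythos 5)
Your argument is correct and is essentially the paper's proof made explicit: the paper simply invokes Young's inequality for the convolution $u=\tfrac12 e^{-|x|}*m$ (and its derivative), notes the extension to complex-valued functions, and then applies the $L^1$ conservation law, exactly as you do. No gap.
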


\begin{proof}
The estimates
$|u(t,x)|\leq \tfrac{1}{2} \|m\|_{L^1}$ and $|u_x(t,x)|\leq \tfrac{1}{2} \|m\|_{L^1}$
are well known in the case of real functions by Young's inequality for convolution.
They extend directly to complex functions.
Then the $L^1$ conservation law \eqref{L1.conservation} completes the proof.
\end{proof}

To proceed, we will first need the transport equations of $u\pm u_x$
under the flow produced by viewing $J$ as a velocity field.

\begin{prop}\label{prop:u-expression1}
Let $u$ be a strong solution to the peakon equation \eqref{unifiedfamilyeqn}.
Write $\Delta:=1-\partial_x^2$ and $v^\pm:=u\pm u_x$.
Then $v^\pm$ satisfies the transport equation
\begin{equation}\label{v.J.flow}
v^\pm_t+Jv^\pm_x =\Delta^{-1}(1\pm \partial_x)\big( -J_xu+i\Im(K) m \big) \mp\Delta^{-1}\big( J_xu_x \big) :=L^\pm .
\end{equation}
\end{prop}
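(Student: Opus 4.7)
The plan is to derive the transport equation for $v^\pm = u \pm u_x$ by inverting the relation $m = \Delta u$ (with $\Delta = 1 - \partial_x^2$) and using the form $m_t = -(Jm)_x + i\Im(e^{i\theta}Q)\, m$ of the peakon equation \eqref{m-eqn}. First, applying $\Delta^{-1}$ to this equation and using $u = \Delta^{-1} m$, one obtains
\[
u_t = -\partial_x \Delta^{-1}(Jm) + \Delta^{-1}(i\Im(e^{i\theta}Q)\, m).
\]
Differentiating with respect to $x$ and using the operator identity $\partial_x^2 \Delta^{-1} = \Delta^{-1} - \mathrm{Id}$, which follows immediately from $\Delta = 1 - \partial_x^2$, I would obtain
\[
u_{xt} = Jm - \Delta^{-1}(Jm) + \partial_x \Delta^{-1}(i\Im(e^{i\theta}Q)\, m).
\]

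Next, form $v^\pm_t = u_t \pm u_{xt}$ and group the $\Delta^{-1}$ and $\partial_x \Delta^{-1}$ pieces into factors of $(1\pm\partial_x)\Delta^{-1}$, giving
\[
v^\pm_t = \pm\, Jm - (1\pm\partial_x)\Delta^{-1}(Jm) + (1\pm\partial_x)\Delta^{-1}(i\Im(K)\, m).
\]
Then compute $v^\pm_x$ pointwise via $u_{xx} = u - m$, which yields $v^+_x = v^+ - m$ and $v^-_x = -v^- + m$, so that $J v^\pm_x = \pm(J v^\pm - J m)$. Adding $v^\pm_t$ to $J v^\pm_x$, the $\pm Jm$ terms cancel, leaving
\[
v^\pm_t + J v^\pm_x = \pm\, J v^\pm - (1\pm\partial_x)\Delta^{-1}(Jm) + (1\pm\partial_x)\Delta^{-1}(i\Im(K)\, m).
\]

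The crux is to re-express the residual $\pm J v^\pm$ together with the $(1\pm\partial_x)\Delta^{-1}(Jm)$ term as a factor involving $J_x u$ and $J_x u_x$, in order to match $L^\pm$. The key identity I would establish is
\[
J v^\pm = (1\pm\partial_x)\Delta^{-1}\bigl( Jm \mp J_x v^\pm \bigr).
\]
Once this is in hand, substituting it cancels the $J m$ piece and reduces the right-hand side to $(1\pm\partial_x)\Delta^{-1}(-J_x v^\pm + i\Im(K)\, m)$, and expanding $J_x v^\pm = J_x u \pm J_x u_x$ produces precisely the stated form of $L^\pm$.

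The main obstacle is the verification of this pivotal identity, which I would carry out by applying $\Delta$ to both sides and checking the resulting pointwise equality. This requires expanding $(J v^\pm)_{xx}$ through the product rule, and substituting the pointwise expressions $v^\pm_x = \pm(v^\pm - m)$ and $v^\pm_{xx} = \pm(v^\pm - m - m_x)$ obtained from $u_{xx}=u-m$ and $u_{xxx}=u_x - m_x$. The bookkeeping is delicate because many products of $J, J_x, J_{xx}$ with $u, u_x, m, m_x$ appear and the equality hinges on the cancellation $J_x(m-u) - J_{xx}u_x = -(J_x u_x)_x$ being absorbable into the outer factor $(1\pm\partial_x)$; once this is observed, inverting $\Delta$ gives the identity and the transport equation \eqref{v.J.flow} follows.
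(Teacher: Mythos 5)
Your strategy is sound and, once the signs are repaired, it lands on the same answer as the paper, but by a genuinely different route. The paper first establishes the pointwise identity $\Delta u_t+\Delta(Ju_x)=m_t+Jm_x-(J_xu_x)_x-J_xu_{xx}$, i.e.\ it commutes multiplication by $J$ past $\Delta$ at the outset so that the transport term $Ju_x$ sits on the left from the start; it then inverts $\Delta$, differentiates once in $x$, and adds/subtracts. You instead keep the equation in conservative form, obtain $u_t$ and $u_{xt}$ as purely nonlocal expressions, append $Jv^\pm_x$ at the end via $v^\pm_x=\pm(v^\pm-m)$, and then need the separate identity $Jv^\pm=(1\pm\partial_x)\Delta^{-1}(Jm\mp J_xv^\pm)$ to convert the leftover local term $\pm Jv^\pm$ back into $\Delta^{-1}$ form. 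That identity is correct for both signs, and verifying it (apply $\Delta$ and expand, as you propose) is exactly the same commutator computation the paper does up front; your organization merely defers and concentrates the bookkeeping into one lemma. The paper's route is slightly shorter because no $\Delta^{-1}(Jm)$ terms ever appear.

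Two concrete issues. First, your $\pm$ bookkeeping fails in the minus branch of the intermediate displays: the correct statements are $v^\pm_t=\pm Jm\mp(1\pm\partial_x)\Delta^{-1}(Jm)+(1\pm\partial_x)\Delta^{-1}(i\Im(K)m)$ (you wrote $-$ instead of $\mp$ on the middle term) and $v^\pm_{xx}=v^\pm-m\mp m_x$ (not $\pm(v^\pm-m-m_x)$). With your signs the advertised cancellation of the $Jm$ piece goes through only for $v^+$; with the corrected signs both cases work and your key identity does produce the cancellation. Second, your final expression $\Delta^{-1}(1\pm\partial_x)(-J_xv^\pm+i\Im(K)m)$ is not literally the displayed $L^\pm$: expanding $J_xv^\pm=J_xu\pm J_xu_x$ gives $\Delta^{-1}(1\pm\partial_x)(-J_xu+i\Im(K)m)\mp\Delta^{-1}(J_xu_x)-\Delta^{-1}\partial_x(J_xu_x)$, which carries an extra $-\Delta^{-1}\partial_x(J_xu_x)$ relative to \eqref{v.J.flow}. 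This is not your error: adding and subtracting the paper's own equations \eqref{u.J.flow} and \eqref{ux.J.flow} produces the same extra term, so the displayed formula for $L^\pm$ in the Proposition appears to have dropped it (with only a harmless change of constant downstream in Lemma~\ref{lem:Jxeqn.estimate}). Your grouped form is the correct one.
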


\begin{proof}
We start from the identity
\begin{equation*}
\Delta u_t+\Delta(Ju_x)
=m_t+Jm_x-(J_{x}u_x)_x-J_xu_{xx} .
\end{equation*}
Using the transport form \eqref{m.transporteqn} of the peakon equation,
we get
\begin{align*}
\Delta u_t+\Delta(Ju_x)
&=-J_xm+i\,\Im(K)m-(J_{x}u_x)_x-J_xu_{xx}\\
&=-J_xu-(J_xu_x)_x+i\,\Im(K)m,
\end{align*}
which leads immediately to the transport equation
\begin{equation}\label{u.J.flow}
u_t+Ju_x =\Delta^{-1}\big( -J_xu-(J_xu_x)_x +i \Im(K)m \big) .
\end{equation}
Then we differentiate with respect to $x$ and rearrange the terms to get
\begin{equation}\label{ux.J.flow}
u_{xt}+Ju_{xx}=\Delta^{-1}\partial_x\big( -J_xu+i \Im(K)m \big) -\Delta^{-1}\big( J_xu_x \big) .
\end{equation}
Adding and subtracting \eqref{u.J.flow} and \eqref{ux.J.flow}
yields equation \eqref{v.J.flow}.
\end{proof}

The main transport equation underlying the blow-up phenomenon
will be the resulting flow on $J_x$.
We note, from expressions \eqref{Q}, \eqref{J} and \eqref{Qx}, that
\begin{equation}\label{J.rel}
J= \Re(e^{i\theta}Q) = \Re(e^{i\theta}v^+ \bar v^-)
\end{equation}
and
\begin{equation}\label{Jx.rel}
J_x= \Re(e^{i\theta}Q_x) = \Re(e^{i\theta}( v^+\bar m - \bar v^- m )) .
\end{equation}

\begin{prop}\label{prop:Jx.transport}
Let $u$ be a strong solution to the peakon equation \eqref{unifiedfamilyeqn}.
Then $J_x$ satisfies the transport equation
\begin{equation}\label{Jx.transport}
J_{xt}+JJ_{xx}+J^2_x =
\Im(e^{i\theta}Q) \Im(e^{i\theta}( v^+\bar m +\bar v^- m ))
+ \Re( e^{i\theta}(\bar m L^+ -m\bar L^-)) .
\end{equation}
\end{prop}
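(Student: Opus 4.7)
\medskip

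\noindent\textbf{Proof plan for Proposition~\ref{prop:Jx.transport}.}

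The plan is to differentiate the identity $J_x=\Re(e^{i\theta}Q_x)$ with respect to $t$, use the transport equations for $v^\pm$ and $m$ from Proposition~\ref{prop:u-expression1} and equation \eqref{m.transporteqn}, and then regroup the result using the explicit decomposition of $K$ into real and imaginary parts. Since $J$ is real, $\partial_t$ commutes with $\Re(e^{i\theta}\cdot)$, so the main work is to compute $Q_{xt}$ in closed form.

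First I would record the algebraic identities that reduce $Q_x$ to the form \eqref{Jx.rel}. From $m=u-u_{xx}$ one gets $v^+_x=v^+-m$ and $v^-_x=m-v^-$, and these immediately give $Q_x=v^+\bar m-\bar v^- m$. Differentiating $Q_x$ in time and substituting the transport equations
\begin{align*}
v^+_t&=-Jv^+_x+L^+, & \bar v^-_t&=-J\bar v^-_x+\bar L^-, \\
m_t&=-Jm_x+Km, & \bar m_t&=-J\bar m_x+\bar K\bar m,
\end{align*}
I would collect the four terms proportional to $J$ into $-JQ_{xx}$ and obtain
\begin{equation*}
Q_{xt}=-JQ_{xx}+(L^+\bar m-\bar L^-m)+(v^+\bar K\bar m-\bar v^- Km).
\end{equation*}
Taking $\Re(e^{i\theta}\cdot)$ of both sides and using $J_{xx}=\Re(e^{i\theta}Q_{xx})$ (again because $J$ is real) yields
\begin{equation*}
J_{xt}+JJ_{xx}=\Re\bigl(e^{i\theta}(\bar mL^+-m\bar L^-)\bigr)+\Re\bigl(e^{i\theta}(v^+\bar K\bar m-\bar v^- Km)\bigr).
\end{equation*}

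The second step is to rewrite the remaining $K$-term. From expression \eqref{K}, $K=-J_x+i\Im(e^{i\theta}Q)$, so its real and imaginary parts are both real scalars. Using this to expand $e^{i\theta}(v^+\bar K\bar m-\bar v^- Km)$, the $-J_x$ contribution factors as $-J_x\,e^{i\theta}(v^+\bar m-\bar v^- m)=-J_x\,e^{i\theta}Q_x$, whose real part is $-J_x^2$. The $i\Im(e^{i\theta}Q)$ contribution factors as $-i\Im(e^{i\theta}Q)\,e^{i\theta}(v^+\bar m+\bar v^- m)$, whose real part is $\Im(e^{i\theta}Q)\,\Im(e^{i\theta}(v^+\bar m+\bar v^- m))$ (using $\Re(-iz)=\Im z$). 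Moving the $-J_x^2$ to the left-hand side produces exactly \eqref{Jx.transport}.

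The only delicate point is the bookkeeping in the second step: one must correctly split $K$ into real and imaginary scalar pieces and track which of the two combinations $v^+\bar m\pm\bar v^- m$ appears in each piece, so that one piece produces $J_x^2$ (via $Q_x$) while the other produces the $\Im(e^{i\theta}Q)\,\Im(\cdots)$ term. Everything else is a routine application of the already-established transport equations.
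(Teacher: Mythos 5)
Your proposal is correct and follows essentially the same route as the paper: differentiate $J_x=\Re(e^{i\theta}(v^+\bar m-\bar v^-m))$ in time, substitute the transport equations for $v^\pm$ and $m$, absorb the advection terms into $-JJ_{xx}$, and split $K=-J_x+i\,\Im(e^{i\theta}Q)$ so that the real part produces $-J_x^2$ via $Q_x$ and the imaginary part produces the $\Im(e^{i\theta}Q)\,\Im(e^{i\theta}(v^+\bar m+\bar v^-m))$ term. The only cosmetic difference is that you carry the complex quantity $Q_{xt}$ through to the end before applying $\Re(e^{i\theta}\cdot)$, whereas the paper takes real parts from the outset; the bookkeeping is identical.
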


\begin{proof}
Differentiating \eqref{Jx.rel} and using the identity $v^\pm_x = \pm(v^\pm -m)$,
we have
\begin{align*}
J_{xx} & =\Re(e^{i\theta}( v^+_x\bar m - \bar v^-_x m + v^+\bar m_x - \bar v^- m_x ))
\end{align*}
and
\begin{align*}
J_{xt}= & \Re(e^{i\theta}( v^+_t\bar m - \bar v^-_t m + v^+\bar m_t - \bar v^- m_t ))\\
= & \Re(e^{i\theta}( (-Jv^+_x + L^+)\bar m - (-J\bar v^-_x + \bar L^-) m
+v^+(\bar K\bar m - J\bar m_x) - \bar v^- (K m - J m_x) ))\\
=& -J\Re(e^{i\theta}( v^+_x\bar m - \bar v^-_x m +v^+\bar m_x  - \bar v^- m_x) )
+ \Re(K) \Re(e^{i\theta}( v^+\bar m - \bar v^- m ))
\\&
+\Im(K) \Im(e^{i\theta}( v^+\bar m +\bar v^- m ))
+\Re(e^{i\theta}( L^+\bar m - \bar L^- m ))
+\Re(e^{i\theta}( L^+\bar m - \bar L^- m ))
\end{align*}
after use of the transport equations \eqref{v.J.flow} and \eqref{m.transporteqn}.
Substituting expression \eqref{K} then yields \eqref{Jx.transport}.
\end{proof}

Now we will obtain a pointwise estimate for the right-hand side of the transport equation \eqref{Jx.transport}.

\begin{lemm}\label{lem:Jxeqn.estimate}
Let $m_0\in H^s\cap L^1$, with $s>\frac{1}{2}$,
and let $T$ be the maximal existence time of the corresponding solution
$m$ to the peakon equation \eqref{unifiedfamilyeqn}.
Then there exists a constant $C_0=7\|m_0\|_{L^1}^3$ such that, for all $t\in[0,T)$:
\begin{equation}\label{Jxeqn.estimate}
J_{xt}+JJ_{xx}+J^2_{x} \leq C_0|m|.
\end{equation}
\end{lemm}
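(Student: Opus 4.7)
The plan is to start from the transport equation for $J_x$ given by Proposition~\ref{prop:Jx.transport}, whose left-hand side is exactly the quantity appearing in \eqref{Jxeqn.estimate}, and to bound its right-hand side
\begin{equation*}
\Im(e^{i\theta}Q)\,\Im(e^{i\theta}(v^+\bar m + \bar v^- m)) + \Re(e^{i\theta}(\bar m L^+ - m\bar L^-))
\end{equation*}
pointwise by a constant multiple of $|m|$. All estimates will rest on two uniform bounds valid on $[0,T)$: the pointwise bounds $|v^\pm(t,x)| \leq \|m_0\|_{L^1}$ from Lemma~\ref{lem:u.ux.estimate}, and the $L^1$ conservation $\|m(t,\cdot)\|_{L^1} = \|m_0\|_{L^1}$ from Theorem~\ref{thm:L1conservation}.

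For the first term the estimate is immediate: since $|Q| = |v^+||v^-| \leq \|m_0\|_{L^1}^2$ and $|\Im(e^{i\theta}(v^+\bar m + \bar v^- m))| \leq 2\|m_0\|_{L^1}|m|$, we obtain at once
\begin{equation*}
\bigl|\Im(e^{i\theta}Q)\,\Im(e^{i\theta}(v^+\bar m + \bar v^- m))\bigr| \leq 2\|m_0\|_{L^1}^3\,|m|.
\end{equation*}

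The main obstacle is the second term, which requires a pointwise bound on
$L^\pm = \Delta^{-1}(1\pm\partial_x)(-J_x u + i\,\Im(K)m) \mp \Delta^{-1}(J_x u_x)$. For this, I would use the Helmholtz Green's function representation $\Delta^{-1} f = p*f$ with $p(x) = \tfrac{1}{2}e^{-|x|}$, together with the elementary observations $\|p\|_{L^\infty} = \|p'\|_{L^\infty} = \tfrac{1}{2}$. Young's convolution inequality then yields the operator bounds $|\Delta^{-1}f(x)| \leq \tfrac{1}{2}\|f\|_{L^1}$ and $|\Delta^{-1}(1\pm\partial_x)f(x)| \leq \|f\|_{L^1}$. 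The inputs are controlled by combining the $L^\infty$ bounds on $u,u_x,Q$ with $\|m\|_{L^1} = \|m_0\|_{L^1}$: from $Q_x = v^+\bar m - \bar v^- m$ one gets $\|J_x\|_{L^1} \leq 2\|m_0\|_{L^1}^2$, and $|\Im(K)| \leq |Q| \leq \|m_0\|_{L^1}^2$, giving
\begin{equation*}
\|-J_x u + i\,\Im(K)m\|_{L^1} \leq \tfrac{1}{2}\|m_0\|_{L^1}\cdot 2\|m_0\|_{L^1}^2 + \|m_0\|_{L^1}^2\cdot\|m_0\|_{L^1} = 2\|m_0\|_{L^1}^3,
\end{equation*}
and $\|J_x u_x\|_{L^1} \leq \tfrac{1}{2}\|m_0\|_{L^1}\cdot 2\|m_0\|_{L^1}^2 = \|m_0\|_{L^1}^3$. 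Assembling these gives $|L^\pm(t,x)| \leq \tfrac{5}{2}\|m_0\|_{L^1}^3$, whence $|\Re(e^{i\theta}(\bar m L^+ - m\bar L^-))| \leq 5\|m_0\|_{L^1}^3\,|m|$.

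Adding the two contributions produces
\begin{equation*}
J_{xt} + JJ_{xx} + J_x^2 \leq 2\|m_0\|_{L^1}^3\,|m| + 5\|m_0\|_{L^1}^3\,|m| = 7\|m_0\|_{L^1}^3\,|m|,
\end{equation*}
which is exactly \eqref{Jxeqn.estimate} with $C_0 = 7\|m_0\|_{L^1}^3$. The only real subtlety is the choice of Young pairing for $(1\pm\partial_x)\Delta^{-1}$: pairing the kernel in $L^\infty$ against $L^1$ data (rather than the reverse) is what lets the $L^1$ conservation of $m$ do the work, since no pointwise bound on $J_x u$ or $\Im(K) m$ is directly available at this stage.
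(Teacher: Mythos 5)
Your proposal is correct and follows essentially the same route as the paper: both bound the right-hand side of the transport equation \eqref{Jx.transport} term by term, using the pointwise bounds $|v^\pm|\leq\|m_0\|_{L^1}$ from Lemma~\ref{lem:u.ux.estimate}, the $L^1$ conservation of $m$, the bound $\|J_x\|_{L^1}\leq 2\|m_0\|_{L^1}^2$, and Young's inequality with the explicit kernel $\tfrac{1}{2}e^{-|x|}$ of $\Delta^{-1}$ to get $|L^\pm|\leq\tfrac{5}{2}\|m_0\|_{L^1}^3$, yielding the same constant $C_0=7\|m_0\|_{L^1}^3$. The only cosmetic difference is that you keep $-J_xu+i\,\Im(K)m$ together under a single $L^1$ norm where the paper estimates the two convolutions separately; the numbers agree.
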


\begin{proof}
From the pointwise estimates \eqref{u.ux.estimate},
we have
\begin{equation*}
|v^\pm| \leq \|v^\pm\|_{L^\infty} \leq \|m_0\|_{L^1} .
\end{equation*}
Hence, we can easily estimate
\begin{gather*}
|\Im(e^{i\theta}Q)|
\leq |v^+| |\bar v^-| \leq \|m_0\|_{L^1}^2 ,
\\
|\Im(e^{i\theta}(v^+\bar m +\bar v^- m ))|
\leq (|v^+|+|v^-|) |m|
\leq 2\|m_0\|_{L^1} |m| ,
\end{gather*}
and
\begin{equation}
\|J_x\|_{L^1} \leq \|v^+\bar m -\bar v^- m\|_{L^1}
\leq (\|v^+\|_{L^\infty}+\|v^-\|_{L^\infty})\|m\|_{L^1}
\leq 2\|m_0\|_{L^1}^2 .
\end{equation}
Then, on the right-hand side of equation \eqref{Jx.transport},
the first term can be estimated as
\begin{equation}\label{Jxeqn.term1.estimate}
| \Im(e^{i\theta}Q) \Im(e^{i\theta}( v^+\bar m +\bar v^- m )) |
\leq 2\|m_0\|_{L^1}^3 |m| ,
\end{equation}
while for the second term we have
\begin{equation}\label{Jxeqn.term2.estimate}
| \Re( e^{i\theta}(\bar m L^+ -m\bar L^-)) |
\leq (|L^+| +|L^-|) |m|
\end{equation}
and
\begin{equation*}
|L^\pm| \leq
\|\Delta^{-1}(J_xu_x)\|_{L^\infty}
+\|\Delta^{-1}(1\pm \partial_x)(J_xu)\|_{L^\infty}
+\|\Delta^{-1}(1\pm \partial_x)(\Im(e^{i\theta}Q) m)\|_{L^\infty} .
\end{equation*}
We now estimate each of these three terms
by using $\Delta^{-1} = \frac{1}{2} e^{-|x|} *\,$.
For the first term, we have
\begin{align*}
\|\Delta^{-1}(J_xu_x)\|_{L^\infty}
=\tfrac{1}{2}\|e^{-|x|}*(J_xu_x)\|_{L^\infty}
\leq \tfrac{1}{2} \|J_xu_x\|_{L^1}
\leq \tfrac{1}{2} \|u_x\|_{L^\infty} \|J_x\|_{L^1}
\leq \tfrac{1}{2} \|m_0\|^3_{L^1} .
\end{align*}
Similarly, the second and third terms can be estimated as
\begin{align*}
\|\Delta^{-1}(1\pm \partial_x)(J_xu)\|_{L^\infty}
& =\tfrac{1}{2}\|(1\pm \partial_x)(e^{-|x|}*(J_xu))\|_{L^\infty}\\
&\leq \tfrac{1}{2}\|(1\mp \sgn(x)) e^{-|x|}\|_{L^\infty} \|J_xu\|_{L^1}
\leq \|u\|_{L^\infty} \|J_x\|_{L^1}
\leq \|m_0\|^3_{L^1}
\end{align*}
and
\begin{align*}
\|\Delta^{-1}(1\pm \partial_x)(\Im(e^{i\theta}Q) m)\|_{L^\infty}
& =\tfrac{1}{2}\|(1\pm \partial_x)(e^{-|x|}*(\Im(e^{i\theta}v^+ \bar v^-) m))\|_{L^\infty} \\
& \leq \tfrac{1}{2}\|(1\mp \sgn(x)) e^{-|x|}\|_{L^\infty} \|\Im(e^{i\theta}v^+ \bar v^-) m\|_{L^1}\\
& \leq \|v^+\|_{L^\infty} \|\bar v^-\|_{L^\infty} \|m\|_{L^1}
\leq \|m_0\|^3_{L^1} .
\end{align*}
Hence, we obtain
\begin{equation}
|L^\pm| \leq \tfrac{5}{2} \|m_0\|^3_{L^1} .
\end{equation}
Combining this estimate with \eqref{Jxeqn.term2.estimate} and \eqref{Jxeqn.term1.estimate}
then yields \eqref{Jxeqn.estimate}.
\end{proof}

From Lemma~\ref{lem:Jxeqn.estimate},
we can now derive the sufficient condition for blow-up stated in Theorem~\ref{thm:blowup}.
The derivation uses the type of argument employed in \Ref{GuiLiuOlvQu,CheLiuQuZha}
for blow-up of the FORQ/mCH equation.

\subsection{Proof of Theorem~\ref{thm:blowup}}

First we evaluate the pointwise estimate \eqref{Jxeqn.estimate}
on the increasing diffeomorphism $h(t,\cdot)$ given by Lemma~\ref{lem:incr.diffeo},
which yields
\begin{equation}\label{blowup.eqn}
J_{xt}(t,h(t,x))+(JJ_{xx})(t,h(t,x))+J_x(t,h(t,x))^2\leq C_0|m(t,h(t,x))| .
\end{equation}
Since $J(t,h(t,x))=h_t(t,x)$ from equation \eqref{ODE},
we have
\begin{align*}
J_{xt}(t,h(t,x))+(JJ_{xx})(t,h(t,x))+J_x(t,h(t,x))^2
= \frac{\d}{\d t}\big(J_x(t,h(t,x))\big)+J_x(t,h(t,x))^2
\end{align*}
for the left-hand side of \eqref{blowup.eqn}.
On the right-hand side of \eqref{blowup.eqn},
we use the relation \eqref{m.along.h} in Lemma~\ref{lem:m.along.h} to get
\begin{align*}
|m(t,h(t,x))| = |m_0(x)|\exp\left(-\int_0^t J_x(\tau,h(\tau,x))\d \tau\right) .
\end{align*}
Hence we obtain
\begin{align*}
\frac{\d}{\d t}\big(J_x(t,h(t,x))\big)+J_x(t,h(t,x))^2
\leq C_0|m_0(x_0)|\exp\left(-\int_0^t J_x(\tau,h(\tau,x))\d \tau\right)
\end{align*}
which can be rearranged into the form
\begin{align*}
\frac{\d^2}{\d t^2} \exp\left(\int_0^t J_x(\tau,h(\tau,x))\d \tau\right)
\leq C_0|m_0(x)| .
\end{align*}
By integrating this inequality, we get
\begin{equation}\label{blowup.rel}
\exp\left(\int_0^t J_x(\tau,h(\tau,x))\d \tau\right)
\leq 1+J_x(0,x)\, t+ \tfrac{1}{2} C_0|m_0(x)|\,t^2 .
\end{equation}
This inequality can be used to establish blow-up.

Choose a point $x_0\in\Rnum$ such that the quadratic polynomial in $t$ given by
the right-hand side of \eqref{blowup.rel}
has a positive discriminant:
\begin{align*}
J_x(0,x_0)^2 -2C_0|m_0(x_0)| >0,
\quad
m_0(x_0)\neq 0 .
\end{align*}
Then the polynomial has two real roots
\begin{align*}
0<T_1=\frac{-J_x(0,x_0)-\sqrt{J_x(0,x_0)^2 -2C_0|m_0(x_0)|}}{C_0|m_0(x_0)|}
\leq T_2= \frac{-J_x(0,x_0)+\sqrt{J_x(0,x_0)^2 -C_0|m_0(x_0)|}}{C_0|m_0(x_0)|} ,
\end{align*}
whereby
\begin{align*}
1+J_x(0,x_0)\,t+\frac {C_0|m_0(x_0)|}2\,t^2\searrow 0,  \text{ as } t\nearrow T_1 .
\end{align*}
Consequently, from the inequality \eqref{blowup.rel}, we have
\begin{align*}
0<\exp\left( \int_0^t \inf\limits_{x\in\Rnum}J_x(\tau,x)\,\d\tau\right)
\leq \exp\left( \int_0^t J_x(\tau,h(\tau,x_0))\, \d\tau\right)
\rightarrow 0 \text{ as } t\rightarrow T_1
\end{align*}
which implies
\begin{align*}
\limsup\limits_{ t\uparrow T} \inf\limits_{x\in\Rnum}J_x(t,x)=-\infty .
\end{align*}

According to Theorem~\ref{thm:criterion}, $m$ blows up at the finite time $T_1$.
This completes the proof.

\section{Concluding Remarks and open problems}\label{sec:conclude}

In this paper, we have studied the Cauchy problem for integrable $U(1)$-invariant peakon equations generated from the NLS hierarchy.
Main results include local well-posedness in Theorem~\ref{thm:wellposedness}, $L^1$ conservation law in Theorem~\ref{thm:L1conservation}, 
and blow-up scenarios in Theorem~\ref{thm:criterion} and Theorem~\ref{thm:blowup}.
The NLS hierarchy is a very important integrable family in soliton theory, and many related work such as the Riemann-Hilbert (RH) problem and $\bar{\partial}$-approach has been done in recent years. But, for the $U(1)$-invariant peakon system (1.7), peakon stability and long time asymptotic behavior associated with the RH problem and $\bar{\partial}$-approach are still open, which is looked forward to solving elsewhere.

\section*{Acknowledgements}

S.C.A.~is supported by an NSERC Discovery grant.
H.H.~ Thanks 
 the Department of Mathematics \& Statistics at Brock University 
for its support during the period in which this work was started.
Z.Q. thanks the UT President Endowed Professorship (Project \# 450000123)  and the 2019--2020 Hunan overseas distinguished professorship project (No.~2019014) for their partial support.

\appendix
\section{Besov spaces, Littlewood-Paley decomposition,\\ and linear transport theory}\label{sec:technicalbackground}

We will recall some facts on the Littlewood-Paley decomposition, the Besov spaces and some of their useful properties,
and we will also summary the main results that will be needed from the linear transport theory.
For more details, see \Ref{BahCheDan}.

Throughout, we use the notation:\\
$\mathcal{S}$ is the Schwartz space of fast decrease;
$\mathcal{S}'$ is the tempered distribution space. \\
$\mathcal{F}$ is the Fourier transform $\hat f(\xi)=\mathcal{F}f:=\int_{\mathbb R}e^{-ix\xi}f(x)\d x$;
$\mathcal{F}^{-1}$ is the inverse of $\mathcal F$.\\
$D$ is a pseudo-differential operator defined by
$\chi(D) f(x):=\mathcal F^{-1}(\chi(\xi)\hat f(\xi))(x)$
for a given a function $\chi(\xi)$ in the frequency space. \\
$S_q$ is a low frequency cut-off operator defined by
\begin{align*}
S_q{u}:=\mathcal{F}^{-1}\chi(2^{-q}\xi)\mathcal{F}u,
\quad
\forall q\in\mathbb{N} .
\end{align*}
$l^r(L^p)$ is the space of sequences of functions
$f_i(x)\in L^p$, $i\in\mathbb Z$,
such that $\{a_i\}_{i\in\mathbb Z}=\{\|f_i\|_{L^p}\}_{i\in\mathbb Z}$ satisfies
$(\sum_{i\in\mathbb Z} a_i^r)^{\frac 1 r<\infty}$. \\
An $S^k$-multiplier, $k\in\Rnum$, is a smooth operator
$A: \Rnum \to \Rnum$ such that,
$\forall \alpha\in \mathbb{N}^n$ and $\forall \xi \in \Rnum$,
$|\partial^\alpha{A(\xi)}|\leq C_\alpha(1+|\xi|)^{k-|\alpha|}$
holds for some constant $C_\alpha$.\\
$\langle \cdot,\cdot \rangle$ denotes the pairing between a normed space and its dual space (i.e., linear bounded functional space). \\

\begin{lemm}
For a pair of real functions $f_1,f_2$ on $\Rnum$:
\begin{align*}
\tfrac{1}{2} (\|f_1\|_{L^p}+\|f_2\|_{L^p}) \leq \|f_1 + i f_2\|_{L^p} \leq \|f_1\|_{L^p}+\|f_2\|_{L^p}
\end{align*}
\end{lemm}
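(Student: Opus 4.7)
The plan is to reduce both inequalities to simple pointwise estimates on the integrand $|f_1+if_2|=\sqrt{f_1^2+f_2^2}$, and then apply Minkowski's inequality (for the upper bound) or take $L^p$ norms directly (for the lower bound).

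For the upper bound, I would start from the pointwise inequality $\sqrt{f_1^2+f_2^2}\leq |f_1|+|f_2|$, which is equivalent to $2|f_1 f_2|\geq 0$ after squaring both sides. Applying the $L^p$ norm to this pointwise inequality, together with the standard triangle inequality (Minkowski)
$$\big\||f_1|+|f_2|\big\|_{L^p}\leq \|f_1\|_{L^p}+\|f_2\|_{L^p},$$
yields $\|f_1+if_2\|_{L^p}\leq \|f_1\|_{L^p}+\|f_2\|_{L^p}$ immediately.

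For the lower bound, the key observation is the even simpler pointwise estimate $|f_j(x)|\leq \sqrt{f_1(x)^2+f_2(x)^2}=|f_1(x)+if_2(x)|$ for $j=1,2$. Taking $L^p$ norms on both sides gives $\|f_j\|_{L^p}\leq \|f_1+if_2\|_{L^p}$ for each $j$. Adding these two inequalities and dividing by $2$ produces
$$\tfrac{1}{2}\big(\|f_1\|_{L^p}+\|f_2\|_{L^p}\big)\leq \|f_1+if_2\|_{L^p},$$
which is the desired lower bound.

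No step here is a genuine obstacle — the statement is a routine equivalence-of-norms result on $\mathbb{R}^2\cong\mathbb{C}$. The only mild care needed is the case $p=\infty$, where Minkowski is replaced by the obvious $\||f_1|+|f_2|\|_{L^\infty}\leq \|f_1\|_{L^\infty}+\|f_2\|_{L^\infty}$, and the same pointwise inequalities still drive both directions. Thus the two inequalities can be established in a few lines each.
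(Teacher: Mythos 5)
Your proof is correct and follows essentially the same route as the paper: the lower bound comes from the pointwise estimate $|f_j|\leq\sqrt{f_1^2+f_2^2}$ (the paper writes it as $|f_j|^p\leq(|f_1|^2+|f_2|^2)^{p/2}$), and the upper bound from the pointwise triangle inequality $\sqrt{f_1^2+f_2^2}\leq|f_1|+|f_2|$ followed by Minkowski (which the paper slightly misnames as Cauchy--Schwarz). Your explicit remark on the $p=\infty$ case is a minor point the paper omits, but the argument is the same.
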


\begin{proof}
The inequalities $|f_1|^p,|f_2|^p \leq (|f_1|^2 + |f_2|^2)^{\frac p2}$ directly imply
\begin{equation*}
\|f_1\|_{L^p}+\|f_2\|_{L^p} \leq 2\|\sqrt{|f_1|^2+|f_2|^2}\|_{L^p} = 2\||f_1+i f_2|\|_{L^p}.
\end{equation*}
Next, the triangle inequality $\sqrt{|f_1|^2+|f_2|^2}\leq |f_1|+|f_2|$ yields
\begin{equation*}
\|\sqrt{|f_1|^2+|f_2|^2}\|_{L^p} \leq \| |f_1|+|f_2|\|_{L^p}.
\end{equation*}
Then the Cauchy-Schwartz inequality $\| |f_1|+|f_2|\|_{L^p} \leq \|f_1\|_{L^p} + \|f_2\|_{L^p}$
completes the proof.
\end{proof}

\begin{prop}
(Littlewood-Paley decomposition) \\
There exists on $\Rnum$ a pair of smooth functions $(\chi,\varphi)$ valued in $[0,1]$,
with the following properties:\\
(1) $\chi$ is supported in the interval $\{\xi\in\Rnum: |\xi|\leq \frac{4}{3}\}$,
and $\varphi$ is supported in the ring $\{\xi\in\Rnum: \frac{3}{4}\leq|\xi|\leq \frac{8}{3}\}$; \\
(2) $\chi(\xi)+{\sum\limits_{q\geq 0}\varphi(2^{-q}\xi)}=1$ for all $\xi\in\Rnum$;\\
(3) $\supp(\varphi(2^{-q}\,\cdot)) \cap \supp(\varphi(2^{-q'}\,\cdot))=\emptyset$
if $|q-q'|\geq 2$,
and $\supp(\chi(\cdot)) \cap \supp(\varphi(2^{-q}\,\cdot))=\emptyset$
if  $q\geq 1$.
\end{prop}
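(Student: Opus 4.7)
The plan is to construct the pair $(\chi,\varphi)$ explicitly from a single radial bump function and then verify the three listed properties by direct computation on supports and by a telescoping identity. The construction is classical, so the main task is to choose the parameters carefully so that the stated support numerology (the ring $[3/4,8/3]$ and the gap-$2$ disjointness) comes out exactly as written.

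First I would fix a smooth non-increasing function $\theta:[0,\infty)\to[0,1]$ with $\theta\equiv 1$ on $[0,3/4]$ and $\theta\equiv 0$ on $[4/3,\infty)$; such a $\theta$ is built in the standard way by convolving a suitable indicator with a mollifier, which takes one line. Then set
\begin{equation*}
\chi(\xi) := \theta(|\xi|),\qquad \varphi(\xi) := \chi(\xi/2) - \chi(\xi),
\end{equation*}
both valued in $[0,1]$ since $\chi(\xi/2)\geq \chi(\xi)$ by monotonicity of $\theta$. Property (1) is immediate: $\chi$ is supported in $|\xi|\leq 4/3$, while $\chi(\xi/2)$ is supported in $|\xi|\leq 8/3$ and $1-\chi(\xi)$ vanishes for $|\xi|\leq 3/4$, so $\varphi$ is supported in the ring $\{3/4\leq|\xi|\leq 8/3\}$.

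For property (2), the key observation is the telescoping identity
\begin{equation*}
\chi(\xi)+\sum_{q=0}^{N}\varphi(2^{-q}\xi)
=\chi(\xi)+\sum_{q=0}^{N}\bigl(\chi(2^{-q-1}\xi)-\chi(2^{-q}\xi)\bigr)
=\chi(2^{-N-1}\xi),
\end{equation*}
and for each fixed $\xi$ the right-hand side equals $\chi(0)=1$ once $N$ is large enough that $|2^{-N-1}\xi|\leq 3/4$. This gives (2) as a pointwise identity (in fact a locally finite sum). Property (3) then reduces to comparing intervals: $\varphi(2^{-q}\,\cdot)$ lives in $\{3\cdot 2^{q-2}\leq|\xi|\leq 2^{q+3}/3\}$, and whenever $|q-q'|\geq 2$ the upper endpoint of the smaller ring ($2^{q+3}/3$) is strictly below the lower endpoint of the larger one ($3\cdot 2^{q'-2}$), so the rings are disjoint; likewise for $q\geq 1$, $\supp\chi\subset\{|\xi|\leq 4/3\}$ lies strictly to the left of $\supp\varphi(2^{-q}\cdot)\subset\{|\xi|\geq 3/2\}$.

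There is really no hard step here — the only subtle point is making sure the constants $3/4$ and $4/3$ in the definition of $\theta$ produce exactly the ratios in (1) and exactly the gap of two dyadic scales in (3). I would therefore double-check the boundary cases $q=1$ of (3) and the endpoint $|\xi|=3/4$ of (1), since these are where the constants are tightest; everything else follows mechanically from the dyadic scaling and the telescoping identity above.
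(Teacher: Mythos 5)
Your construction is correct, and it is essentially the standard one: the paper does not prove this proposition at all (it is quoted as background from Bahouri--Chemin--Danchin), and the argument you give --- take a radial nonincreasing cutoff $\chi$ equal to $1$ on $\{|\xi|\leq 3/4\}$ and supported in $\{|\xi|\leq 4/3\}$, set $\varphi(\xi)=\chi(\xi/2)-\chi(\xi)$, and verify the telescoping identity and the support arithmetic $8/3<3$ --- is precisely the textbook proof. All the numerical checks you flag (the $q=1$ case of (3) and the inner radius $3/4$ in (1)) come out as you claim, so nothing further is needed.
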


For all $u \in \mathcal{S}'$,
we can define the nonhomogeneous dyadic blocks as follows.
Let
\begin{align*}
\Delta_q{u} & := 0 \text{ if } q\leq -2,
\\
\Delta_{-1}{u} & := \chi(D)u=\mathcal{F}^{-1}(\chi \mathcal{F}u),
\\
\Delta_q{u} & := \varphi(2^{-q}D)u=\mathcal{F}^{-1}(\varphi(2^{-q}\cdot)\mathcal{F}u) \text{ if } q\geq 0 .
\end{align*}
Then
\begin{align*}
u={\sum\limits_{q\in\mathbb{Z}}}\Delta_q{u}\quad \in \mathcal{S}'(\Rnum)
\end{align*}
where the right-hand side is called the nonhomogeneous Littlewood-Paley decomposition of $u$.

\begin{rema}\label{rem:S.op}
(i) The low frequency cut-off operator obeys
\begin{align*}
S_q u = \sum\limits_{p=-1}^{q-1}\Delta_p{u} .
\end{align*}
(ii) The Littlewood-Paley decomposition obeys
\begin{align*}
& \Delta_p\Delta_q{u}= 0 \quad\text{ if } |p-q|\geq2 ,
\\
& \Delta_q(S_{p-1}u \Delta_p{v})= 0 \quad\text{ if } |p-q|\geq5 ,
\end{align*}
for all $u,v \in \mathcal{S}'(\Rnum)$.
These properties describe quasi-orthogonality in $L^2$ in the following sense:
the symbols of $\Delta_q$ and $\Delta_p$ are $\varphi(2^{-q}\xi)$ and $\varphi(2^{-p}\xi)$ respectively, which obey
$\int_{\mathbb R}\varphi(2^{-q}\xi)\varphi(2^{-p}\xi)\d\xi=0$ for $|q-p|\geq 2$.
\\
(iii) Young's inequality implies, for all $1\leq p\leq\infty$,
\begin{align*}
\|\Delta_q{u}\|_{L^p}, \|S_q{u}\|_{L^p}\leq C\|u\|_{L^p}
\end{align*}
where $C$ is a positive constant independent of $q$.
\end{rema}

\begin{defi}
(Besov spaces) \\
Let $s\in\Rnum$ and $1\leq p,r\leq\infty$.
The nonhomogeneous Besov space is defined by
$B^s_{p,r}(\Rnum):= \{f \in \mathcal{S}'(\Rnum):\|f\|_{B^s_{p,r}}< \infty\}$
in terms of the Besov norm
\begin{align*}
\|f\|_{B^s_{p,r}}:= \|2^{qs}\Delta_q{f}\|_{l^r(L^p)}=\big\|(2^{qs}\|\Delta_q{f}\|_{L^p})_{q\geq-1}\big\|_{l^r}
\end{align*}
for $s<\infty$.
In the case $s=\infty$,
the space is defined by $B^\infty_{p,r}:= \bigcap\limits_{s\in\Rnum}B^s_{p,r}$.
\end{defi}

In the following lemma, we list some important properties of Besov spaces.
\begin{lemm}\label{lem:Besov.spaces}
Suppose that $s\in\Rnum$, 1$\leq p, r, p_{i},r_{i}\leq\infty$, $i=1, 2$.
\\
(i) Basic properties:
$B^s_{p,p}=W^{s,p}$;
$B^s_{2,2}=H^s$.
$B^s_{p,r}$ is a Banach space which is continuously embedded in $\mathcal{S}'$.
\\
(ii) Density:
$\C^{\infty}_c$ is dense in $B^s_{p,r}$ iff $1\leq p,r < \infty$.
\\
(iii) Embedding:
$B^s_{p_1,r_1}\hookrightarrow B^{s-({\frac1{ p_1}}-{\frac1{ p_2}})}_{p_2,r_2}$
if $p_1\leq p_2$ and $r_1\leq r_2$.
$B^{s_2}_{p,r_2}\hookrightarrow B^{s_1}_{p,r_1}$ is locally compact if $s_1 < s_2$.
$B^s_{p,r}\hookrightarrow L^{\infty}$
if $s>\frac{1}{p}$ or if $s\geq  \frac{1}{p}$ and $r=1$.
\\
(iv) Algebraic properties:
$B^s_{p,r}\bigcap L^\infty$ is an algebra if $s>0$.
$B^s_{p,r}$ itself is an algebra if $s>\frac{1}{p}$ or if $s\geq \frac{1}{p}$ and $r=1$.
\\
(v) Complex interpolation:
\begin{align}\label{interpolation1}
\|f\|_{B^{\theta {s_1}+(1-\theta){s_2}}_{p,r}}\leq \|f\|^{\theta}_{B^{s_1}_{p,r}}\|f\|^{1-\theta}_{B^{s_2}_{p,r}},
\quad \forall  u\in B^{s_1}_{p,r}\cap B^{s_1}_{p,r},\quad \forall  \theta \in [0,1].
\end{align}
(vi) Logarithm interpolation:
for all $s\in \Rnum$ and $\varepsilon >0$, there exists a constant $C$ such that
for any $u$ in $B^{s+\varepsilon}_{p,\infty}$ and $f$ in $B^{\varepsilon}_{\infty,\infty}$:
\begin{align}
& \|u\|_{B^s_{p,1}}\leq C (1+1/\varepsilon) \|u\|_{B^s_{p,\infty}}
\ln\Big(e+ \|u\|_{B^{s+\varepsilon}_{p,\infty}}/\|u\|_{B^{s}_{p,\infty}}\Big),
\quad 1\leq p \leq \infty;
\label{interpolation2}
\\
& \|f\|_{L^\infty}\leq C (1/\varepsilon) \|u\|_{B^0_{\infty,\infty}}
\ln\Big(e+ \|f\|_{B^{\varepsilon}_{p,\infty}}/\|f\|_{B^{0}_{\infty,\infty}}\Big).
\label{interpolation3}
\end{align}
(vii) Fatou's lemma:
if $(u_n)_{n\in \mathbb{N}}$ is bounded in $B^s_{p,r}$
and $u_n \to u $ in $\mathcal{S}'$,
then $u\in B^s_{p,r}$ and
\begin{align*}
\|u\|_{B^s_{p,r}}\leq C\liminf\limits_{n\to \infty} \|u_n\|_{B^s_{p,r}} .
\end{align*}
(viii) For all $k\in\Rnum$, an $S^k$-multiplier is continuous from $B^s_{p,r}$ to $B^{s-k}_{p,r}$.
\end{lemm}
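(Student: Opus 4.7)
The plan is to establish each of the eight items as standard consequences of the Littlewood-Paley decomposition together with Bernstein's inequalities, Bony's paraproduct calculus, and classical Fourier analysis. Since the lemma is a compendium of well-known facts, the natural reference for self-contained proofs is \Ref{BahCheDan}; nevertheless I sketch the route. The uniform toolkit consists of: (a) Bernstein's inequality $\|\Delta_q f\|_{L^{p_2}}\leq C 2^{q(1/p_1-1/p_2)}\|\Delta_q f\|_{L^{p_1}}$ for $p_1\leq p_2$, together with $\|\partial^\alpha\Delta_q f\|_{L^p}\leq C 2^{q|\alpha|}\|\Delta_q f\|_{L^p}$; (b) the quasi-orthogonality properties from Remark~\ref{rem:S.op}; and (c) Bony's decomposition $uv=T_u v+T_v u+R(u,v)$.

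For (i), the identification $B^s_{2,2}=H^s$ follows from Plancherel applied dyadically, while $B^s_{p,p}=W^{s,p}$ uses the Littlewood-Paley characterization of Sobolev spaces; Banach completeness is inherited from $l^r(L^p)$, and continuous embedding into $\mathcal{S}'$ is immediate from Bernstein. Density (ii) proceeds by approximating $f$ via the partial sums $S_q f$, which converges in $B^s_{p,r}$ exactly when $1\leq p,r<\infty$. The embedding (iii) is a direct application of Bernstein, which trades integrability for regularity index; the embedding $B^s_{p,r}\hookrightarrow L^\infty$ for $s>1/p$ (or $s=1/p$ with $r=1$) follows by summing $\|\Delta_q f\|_{L^\infty}\leq C 2^{q/p}\|\Delta_q f\|_{L^p}$, the geometric-type series converging precisely in this regime. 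For (iv), Bony's decomposition bounds the paraproduct terms $T_u v$ and $T_v u$ by $\|u\|_{L^\infty}\|v\|_{B^s_{p,r}}$, while the remainder $R(u,v)$ needs $s>0$ for the dyadic sum to converge, yielding the algebra property on $B^s_{p,r}\cap L^\infty$ and, when combined with the embedding in (iii), on $B^s_{p,r}$ alone.

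For (v), Hölder's inequality with exponents $1/\theta$ and $1/(1-\theta)$ applied to the $l^r$-norm defining $\|\cdot\|_{B^{\theta s_1+(1-\theta)s_2}_{p,r}}$ yields the interpolation bound. The logarithmic interpolation (vi) is more delicate: I would split the dyadic series at a cutoff index $N$, estimate the low-frequency part by $N\|u\|_{B^s_{p,\infty}}$ and the high-frequency tail by $C\varepsilon^{-1}2^{-N\varepsilon}\|u\|_{B^{s+\varepsilon}_{p,\infty}}$, and optimize $N$ to produce the logarithmic factor. Fatou's lemma (vii) follows by applying classical Fatou in $L^p$ to each fixed block $\Delta_q u_n\to \Delta_q u$ (convergence in $\mathcal{S}'$ gives convergence after the Fourier multiplier $\Delta_q$, which has a Schwartz kernel), then applying Fatou again in the $l^r$-sum over $q$. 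Finally, (viii) is proved by observing that an $S^k$-multiplier $A(D)$ is approximately local in frequency: the kernel of $A(D)\varphi(2^{-q}\cdot)$ is an $L^1$-function with norm bounded by $C2^{qk}$, so $\|A(D)\Delta_q f\|_{L^p}\leq C 2^{qk}\|\Delta_q f\|_{L^p}$ by Young's inequality, producing the advertised loss of $k$ derivatives in the Besov index.

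The main obstacle is handling the borderline cases sharply: the exclusion of $r=\infty$ or $p=\infty$ in density, the equality cases $s=1/p$ in the embedding and algebra statements, and the correct identification of endpoint exponents in the interpolation estimates. These distinctions require careful tracking of the summability index $r$ and of the precise Bernstein constants, rather than soft functional-analytic arguments; this is where appealing to the worked-out presentation in \Ref{BahCheDan} is most useful.
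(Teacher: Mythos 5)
The paper gives no proof of this lemma: it is stated as background in the Appendix with a blanket reference to \Ref{BahCheDan}, which is precisely the source your sketch reconstructs. Your outline of the standard Littlewood--Paley, Bernstein, and Bony-paraproduct arguments for items (i)--(viii) is correct and is the canonical route taken in that reference, so your approach coincides with the paper's (namely, deferring to the standard theory).
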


\begin{lemm}\label{Moser}
Moser-type inequalities\\
(i)
Let $s>0$, $1\leq p,r\leq \infty$, $u,v\in B^s_{p,r}\cap L^\infty$.
Then there exists a constant $C=C(s)$ such that
\begin{align*}
\|uv\|_{B^s_{p,r}}\leq C (\|u\|_{L^\infty}\|v\|_{B^s_{p,r}}+\|v\|_{L^\infty}\|u\|_{B^s_{p,r}}) .
\end{align*}
(ii)
Let $1\leq p,r \leq\infty$, $(s_1,s_2)\in\mathrm R^2$ such that
$s_1\leq s_2$, $s_1+s_2>\max(0,\frac2p-1)$,
and $s_2 >\frac{1}{p}$ if $r\neq1$, or $s_2=\frac{1}{p}$ if $r=1$.
Then there exists a constant $C=C(s_1,s_2,p,r)$ such that
\begin{align*}
\|uv\|_{B^{s_1}_{p,r}}\leq C \|u\|_{B^{s_1}_{p,r}}\|v\|_{B^{s_2}_{p,r}} .
\end{align*}
(iii)
For any  $u\in B^{\frac{1}{p}-1}_{p,\infty}(\Rnum)$ and $v\in B^{\frac{1}{p}}_{p,1}(\Rnum)$,
we have
\begin{align*}
\|uv\|_{B^{\frac{1}{p}-1}_{p,\infty}}\leq C \|u\|_{B^{\frac{1}{p}-1}_{p,\infty}}\|v\|_{B^{\frac{1}{p}}_{p,1}}
\end{align*}
\end{lemm}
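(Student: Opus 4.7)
The plan is to invoke Bony's paraproduct decomposition
\[
uv = T_u v + T_v u + R(u,v),
\]
with paraproducts $T_u v := \sum_{q\geq -1} S_{q-1}u\,\Delta_q v$ and remainder $R(u,v) := \sum_{q}\sum_{|q-q'|\leq 1}\Delta_q u\,\Delta_{q'} v$, and to estimate the Besov norm of each piece separately using the frequency support properties from Remark~\ref{rem:S.op}(ii), the uniform $L^p$-boundedness of the blocks $\Delta_q$ and $S_q$, and Bernstein's inequalities. The standard observation is that $S_{q-1}u\,\Delta_q v$ is frequency-localized in a ring of radius $\sim 2^q$, so only $O(1)$ values of $q$ contribute to $\Delta_j(T_uv)$, whereas $\Delta_q u\,\Delta_{q'} v$ in the remainder is localized in a ball of radius $\sim 2^q$, so $\Delta_j$ picks up all $q$ with $j\leq q+N$.

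For (i), one bounds $\|\Delta_j(T_uv)\|_{L^p}\leq C\|u\|_{L^\infty}\|\Delta_q v\|_{L^p}$ for $|j-q|\leq 2$, then takes the $\ell^r(2^{js})$ norm to obtain $\|T_uv\|_{B^s_{p,r}}\leq C\|u\|_{L^\infty}\|v\|_{B^s_{p,r}}$; $T_vu$ is symmetric. For $R(u,v)$, summing over $q\geq j-N$ the bound $\|\Delta_q u\,\Delta_{q'}v\|_{L^p}\leq\|\Delta_q u\|_{L^p}\|\Delta_{q'}v\|_{L^\infty}$ together with $\|\Delta_{q'}v\|_{L^\infty}\leq C 2^{q'/p}\|\Delta_{q'}v\|_{L^p}$ (Bernstein) produces a geometric series convergent exactly when $s>0$, giving the desired bilinear estimate.

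For (ii), the hypothesis $s_2>1/p$ (or $s_2=1/p$ with $r=1$) provides the embedding $B^{s_2}_{p,r}\hookrightarrow L^\infty$, so $T_vu$ is handled as in (i) and gains the regularity $s_1$ from $u$. The paraproduct $T_uv$ is estimated similarly, using $\|S_{q-1}u\|_{L^\infty}\leq C 2^{q(1/p-s_1)^+}\|u\|_{B^{s_1}_{p,r}}$ to absorb the case $s_1\leq 1/p$. The remainder is the delicate term: one obtains an $L^{p/2}$ bound $\|\Delta_q u\,\Delta_{q'}v\|_{L^{p/2}}$ and then applies Bernstein to convert to $L^p$ at a cost of $2^{q(2/p-1)_+}$, which is exactly why the sharp condition $s_1+s_2>\max(0,2/p-1)$ appears. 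Part (iii) is the endpoint case $s_1=1/p-1$, $s_2=1/p$: summability $r=1$ for $v$ saves the critical series $\sum_{q'}\|\Delta_{q'}v\|_{L^p}$, and $u\in B^{1/p-1}_{p,\infty}$ is enough for the paraproduct estimate.

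The main obstacle throughout is controlling the remainder $R(u,v)$: paraproducts are painless because the frequency support of $S_{q-1}u\,\Delta_q v$ is sharply localized near $2^q$, but in $R(u,v)$ both factors occupy comparable frequencies, and $\Delta_j$ can see arbitrarily high-frequency inputs. The regularity sum $s_1+s_2$ must be strong enough to make the resulting tail series converge, and in the critical endpoint of (iii) this is only possible by exploiting $r=1$ to gain absolute summability. Once the paraproduct pieces and the remainder are each estimated, adding the three contributions yields the Moser-type inequalities in (i)--(iii).
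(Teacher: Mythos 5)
This lemma is quoted in the Appendix as background material with a pointer to \Ref{BahCheDan}; the paper gives no proof of its own, so there is nothing internal to compare against. Your sketch reproduces the standard Bony paraproduct/remainder argument from that reference, and the architecture (paraproducts localized in rings, remainder localized in balls, convergence of the tail series governed by $s>0$ resp.\ $s_1+s_2>\max(0,\tfrac2p-1)$, and $r=1$ rescuing the endpoint in (iii)) is correct. One small imprecision: in the remainder estimate for part (i) you should bound $\|\Delta_{q'}v\|_{L^\infty}\le C\|v\|_{L^\infty}$ directly rather than via Bernstein's inequality $\|\Delta_{q'}v\|_{L^\infty}\le C2^{q'/p}\|\Delta_{q'}v\|_{L^p}$, since the latter trades $\|v\|_{L^\infty}$ for a $B^{1/p}_{p,\infty}$-type quantity that is not what appears on the right-hand side of (i); with that one-line correction the geometric series in $2^{(j-q)s}$ converges for $s>0$ exactly as you describe.
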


\begin{prop}\label{testfunction}
Let $1\leq p,r\leq\infty $ and $s\in\Rnum$. \\
(i) For all $u\in B^s_{p,r}$ and $\phi\in B^{-s}_{p',r'}$,
\begin{align*}
(u,\phi) \longmapsto \sum\limits_{|j-i|\leq 1} \langle\Delta_ju,\Delta_{i}\phi\rangle
\end{align*}
defines a continuous bilinear functional on $B^s_{p,r} \times B^{-s}_{p',r'}$. \\
(ii) Denote by $Q^{-s}_{p',r'}$ the set of functions $\phi$ in $\mathcal{S}$ such that $\|\phi\|_{B^{-s}_{p',r'}}\leq1$.
If $u$ is in $\mathcal{S}'$, then
\begin{align*}
 \|u\|_{B^{s}_{p,r}}\leq C\sup\limits_{\phi\in Q^{-s}_{p',r'}}\langle u,\phi\rangle.
\end{align*}
\end{prop}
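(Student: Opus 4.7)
The plan is to derive both parts of Proposition~\ref{testfunction} from H\"older's inequality in physical and sequence space, together with the almost-orthogonality of the Littlewood-Paley blocks from Remark~\ref{rem:S.op}(ii); for (ii), I additionally exploit the standard duality of $L^p$ and $\ell^r$ to construct an almost-optimal Schwartz test function.

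For part (i), I would first apply H\"older's inequality in physical space with conjugate exponents $p,p'$ to get $|\langle \Delta_j u,\Delta_i\phi\rangle|\leq \|\Delta_j u\|_{L^p}\|\Delta_i \phi\|_{L^{p'}}$. Since the summation is restricted to $|j-i|\leq 1$, the weight $2^{(j-i)s}$ is bounded uniformly by $2^{|s|}$. Writing $\|\Delta_j u\|_{L^p}\|\Delta_i \phi\|_{L^{p'}}=2^{(j-i)s}(2^{js}\|\Delta_j u\|_{L^p})(2^{-is}\|\Delta_i\phi\|_{L^{p'}})$ and then applying H\"older in the sequence space with conjugate exponents $(r,r')$ yields
\begin{equation*}
\sum_{|j-i|\leq 1}|\langle\Delta_j u,\Delta_i\phi\rangle|
\leq 3\cdot 2^{|s|}\,\|u\|_{B^s_{p,r}}\,\|\phi\|_{B^{-s}_{p',r'}},
\end{equation*}
establishing continuity of the bilinear form.

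For part (ii), the strategy is dual. By the definition $\|u\|_{B^s_{p,r}}=\|(2^{js}\|\Delta_j u\|_{L^p})_{j\geq -1}\|_{\ell^r}$, duality of $\ell^r$ (with $c_0$ replacing $\ell^\infty$ in the endpoint case) furnishes a sequence $(a_j)$ with $\|(a_j)\|_{\ell^{r'}}\leq 1$ almost saturating the $\ell^r$-norm. For each $j$, duality of $L^p$ (again with an appropriate adjustment if $p=\infty$) furnishes a $g_j\in L^{p'}$ with $\|g_j\|_{L^{p'}}\leq 1$ and $\langle \Delta_j u, g_j\rangle$ arbitrarily close to $\|\Delta_j u\|_{L^p}$. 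Mollifying and truncating $g_j$ produces a Schwartz approximation without changing these bounds by more than a prescribed error. I then set $\tilde g_j:=(\Delta_{j-1}+\Delta_j+\Delta_{j+1})g_j$, which has Fourier support in an annulus near $2^j$, satisfies $\langle\Delta_j u,\tilde g_j\rangle=\langle\Delta_j u,g_j\rangle$ by quasi-orthogonality (Remark~\ref{rem:S.op}(ii)), and obeys $\|\tilde g_j\|_{L^{p'}}\leq C\|g_j\|_{L^{p'}}$ by Remark~\ref{rem:S.op}(iii). Finally, I define the truncated sum $\phi_N:=\sum_{j=-1}^{N} a_j 2^{js}\tilde g_j \in\mathcal{S}$. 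Quasi-orthogonality implies $\Delta_i \phi_N$ is a sum of at most three terms of the form $a_{i+\ell} 2^{(i+\ell)s}\Delta_i\tilde g_{i+\ell}$, so by the boundedness of $\Delta_i$ on $L^{p'}$ we obtain $\|\phi_N\|_{B^{-s}_{p',r'}}\leq C'$ with a constant independent of $N$ and $u$. Pairing gives $\langle u,\phi_N\rangle\geq \tfrac14\|u\|_{B^s_{p,r}}$ (up to a negligible error), and therefore $\|u\|_{B^s_{p,r}}\leq 4C'\sup_{\phi\in Q^{-s}_{p',r'}}\langle u,\phi\rangle$ upon normalizing $\phi=\phi_N/C'$ and taking $N\to\infty$.

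The main obstacle is the requirement $\phi\in\mathcal{S}$ in part (ii): the duality of $\ell^r$ and $L^p$ is naturally realized by non-smooth functions, and one must simultaneously truncate the sum over $j$, mollify the dual elements $g_j$, and band-limit them via $\tilde g_j$, all while keeping the $B^{-s}_{p',r'}$-norm controlled by a universal constant and preserving the pairing with $u$ up to arbitrarily small error. Additional technical care is needed in the endpoint cases $r\in\{1,\infty\}$ and $p\in\{1,\infty\}$, where the relevant dualities must be replaced by predual arguments ($c_0$ in place of $\ell^\infty$, and weak-$*$ density of $\mathcal{S}$ in $L^\infty$).
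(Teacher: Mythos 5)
The paper does not actually prove Proposition~\ref{testfunction}: it is quoted as a standard fact from the Littlewood--Paley toolbox and referred to \Ref{BahCheDan}, so there is no in-paper argument to compare against. Judged on its own merits, your part (i) is correct and is the standard argument: H\"older in $x$ on each block, the uniform bound $2^{|i-j||s|}\leq 2^{|s|}$ on the three diagonals, and H\"older in $\ell^r\times\ell^{r'}$ give continuity with constant $3\cdot 2^{|s|}$.

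Part (ii) follows the right strategy but contains a genuine gap at the band-limiting step. You define $\tilde g_j:=(\Delta_{j-1}+\Delta_j+\Delta_{j+1})g_j$ and correctly observe that $\langle\Delta_j u,\tilde g_j\rangle=\langle\Delta_j u,g_j\rangle$; but that is not the quantity that appears when you pair $u$ against $\phi_N=\sum_j a_j2^{js}\tilde g_j$. What appears is
\begin{equation*}
\langle u,\tilde g_j\rangle=\langle(\Delta_{j-1}+\Delta_j+\Delta_{j+1})u,\,g_j\rangle
=\langle\Delta_j u,g_j\rangle+\langle\Delta_{j-1}u,g_j\rangle+\langle\Delta_{j+1}u,g_j\rangle ,
\end{equation*}
and the two cross terms have uncontrolled sign and size comparable to $\|\Delta_{j\pm1}u\|_{L^p}$. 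Summed against the nonnegative weights $a_j2^{js}$ they can cancel the main term, so the claimed lower bound $\langle u,\phi_N\rangle\geq\tfrac14\|u\|_{B^s_{p,r}}$ does not follow. The fix is to drop the triple block and take $\tilde g_j:=\Delta_j g_j$ (still Schwartz, since its Fourier transform is smooth and compactly supported): then $\langle u,\Delta_j g_j\rangle=\langle\Delta_j u,g_j\rangle\geq(1-\ep)\|\Delta_j u\|_{L^p}$ exactly as needed (using that the symbols $\chi,\varphi$ are even so $\Delta_j$ is self-adjoint for the bilinear pairing), while the quasi-orthogonality $\Delta_i\Delta_j=0$ for $|i-j|\geq2$ still yields $\|\phi_N\|_{B^{-s}_{p',r'}}\leq C'$ uniformly in $N$. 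With that correction, and your stated care at the endpoints $p\in\{1,\infty\}$, $r\in\{1,\infty\}$ (plus the observation that if some $\Delta_j u\notin L^p$ the right-hand side is already infinite), the argument goes through.
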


Now we state some useful results in the transport equation theory,
which are crucial to the proofs of our main theorems.

\begin{lemm}\label{apriori-estimates}
(A priori estimates in Besov spaces)\\
Let $1\leq p\leq p_1\leq \infty$, $1\leq r \leq \infty$, and
$\sigma\geq -\min (\tfrac{1}{p_1}, 1-\tfrac{1}{p})$ with strict inequality if $r<\infty$.
Assume that $f_0\in B^\sigma_{p,r}$, $F\in L^1(0,T; B^\sigma_{p,r})$,
and consider the linear transport equation
\begin{align*}
(*)\qquad
\partial_t f+v\,\partial_x f=F,
\quad
f|_{t=0} =f_0.
\end{align*}
Suppose $\partial_x v$ belongs to $L^1(0,T; B^{\sigma-1}_{p_1,r})$
for $\sigma> 1+{\frac{1}{p_1}}$, $r\neq1$,
or $\sigma =1+\frac1{p_1}$, $r=1$),
and otherwise $\partial_x v$ belongs to $L^1(0,T; B^{\frac1{p_1}}_{p_1,\infty}\bigcap L^\infty)$.
If $f\in L^\infty(0,T; B^\sigma_{p,r})\bigcap \C([0,T]; \mathcal{S}')$ solves $(*)$,
then there exists a constant $C$, depending only on $p$, $r$, and $\sigma$,
such that the following statements hold:\\
(i)
\begin{equation*}
\|f(t)\|_{B^\sigma_{p,r}}\leq \|f_0\|_{B^\sigma_{p,r}}\,+\, \int_0^t \|F(\tau)\|_{B^\sigma_{p,r}}\d \tau\,+\, C\int_0^t V'(\tau)\|f(\tau)\|_{B^\sigma_{p,r}} \d \tau
\end{equation*}
and hence
\begin{equation}\label{apriori}
\|f(t)\|_{B^\sigma_{p,r}}\leq e^{CV(t)} \left(\|f_0\|_{B^\sigma_{p,r}}\,+\, \int_0^t e^{-CV(\tau)} \|F(\tau)\|_{B^\sigma_{p,r}}\d \tau\right),
\end{equation}
with
\begin{equation*}
V(t)=\begin{cases}
\int_0^t \|\partial_x v(\tau)\|_{B^{\frac1{p_1}}_{p_1,\infty}\cap L^\infty}\d \tau,
& \sigma < 1+\frac 1{p_1}
\\
\int_0^t \|\partial_x v(\tau)\|_{B^{\sigma-1}_{p_1,r}}\d \tau,
& \sigma > 1+\frac1{p_1}, r\neq 1,
~\text{or}~
\sigma= 1+\frac1{p_1}, r=1
\end{cases} .
\end{equation*}
(ii) If $r<\infty$, then $f\in \C([0,T]; B^\sigma_{p,r})$.
If $r=\infty$, then $f\in \C([0,T]; B^{\sigma'}_{p,1})\cap \C_w([0,T];B^s_{p,\infty}(\Rnum))$  for all $\sigma'<\sigma$.
\\
(iii) If $f=v$, then for all $\sigma>0$ the estimate \eqref{apriori} holds
with $V'(t)=\|\partial_xv(t)\|_{L^\infty}$.
\end{lemm}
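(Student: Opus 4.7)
The plan is to follow the classical Bahouri--Chemin--Danchin approach, applying the Littlewood--Paley operator $\Delta_q$ to the transport equation and localizing the analysis dyadically in frequency. Setting $f_q:=\Delta_q f$, the equation becomes
\begin{equation*}
\partial_t f_q + v\,\partial_x f_q = \Delta_q F + R_q,
\qquad
R_q := [v\,\partial_x, \Delta_q]f = v\,\partial_x \Delta_q f - \Delta_q(v\,\partial_x f).
\end{equation*}
For finite $p$, I would multiply by $|f_q|^{p-2}f_q$ and integrate over $\Rnum$; the transport term yields $-\tfrac{1}{p}\int (\partial_x v)|f_q|^p\,\d x$ after integration by parts, which is controlled by $\tfrac{1}{p}\|\partial_x v\|_{L^\infty}\|f_q\|_{L^p}^p$. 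The case $p=\infty$ is handled by the characteristic method, which gives the same $L^\infty$-type bound. Dividing by $\|f_q\|_{L^p}^{p-1}$ produces the pointwise-in-time inequality
\begin{equation*}
\tfrac{\d}{\d t}\|f_q(t)\|_{L^p} \leq \tfrac{1}{p}\|\partial_x v(t)\|_{L^\infty}\|f_q(t)\|_{L^p} + \|\Delta_q F(t)\|_{L^p} + \|R_q(t)\|_{L^p}.
\end{equation*}

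The main obstacle is the commutator term $R_q$, which must be estimated with a $2^{qs}$-weighted $\ell^r$ norm bound of the form $\|(2^{qs}\|R_q\|_{L^p})_q\|_{\ell^r}\leq C V'(t)\|f\|_{B^\sigma_{p,r}}$, where $V'(t)$ matches the space prescribed in the hypothesis. I would obtain this via Bony's paraproduct decomposition: splitting $v\,\partial_x f = T_v \partial_x f + T_{\partial_x f}v + R(v,\partial_x f)$ and analysing the three pieces separately through classical localization properties (quasi-orthogonality of Littlewood--Paley blocks). The distinct regimes for $V(t)$ arise precisely because different pieces of the paraproduct are controlled by different norms of $\partial_x v$, depending on whether $\sigma$ lies below, above, or exactly at the critical regularity $1+\tfrac{1}{p_1}$; the endpoint $\sigma=1+\tfrac{1}{p_1}$ with $r=1$ uses Lemma~\ref{lem:Besov.spaces}(iv), while the subcritical range needs the composite space $B^{1/p_1}_{p_1,\infty}\cap L^\infty$ to absorb the failure of $\partial_x v \in L^\infty$.

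Once the commutator bound is in hand, multiplying the pointwise inequality by $2^{q\sigma}$ and taking the $\ell^r$ norm yields
\begin{equation*}
\tfrac{\d}{\d t}\|f(t)\|_{B^\sigma_{p,r}} \leq C V'(t)\|f(t)\|_{B^\sigma_{p,r}} + \|F(t)\|_{B^\sigma_{p,r}},
\end{equation*}
and Gronwall's inequality immediately gives both the additive estimate and the exponential form \eqref{apriori}. This proves part (i).

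For part (ii), the time-continuity when $r<\infty$ follows by a standard approximation: smoothing $f_0$, $F$, and $v$ via $S_q$ produces smooth solutions that are manifestly continuous in $B^\sigma_{p,r}$, and the a priori estimate allows passage to the limit. When $r=\infty$, the uniform bound plus density of $B^{\sigma'}_{p,1}$ in $B^{\sigma}_{p,\infty}$ for $\sigma'<\sigma$ gives strong continuity in the lower regularity and weak-$\ast$ continuity in $B^\sigma_{p,\infty}$ by Lemma~\ref{lem:Besov.spaces}(vii). Part (iii) is the easiest: when $f=v$, for any $\sigma>0$ the commutator admits the simpler estimate $\|(2^{q\sigma}\|R_q\|_{L^p})_q\|_{\ell^r}\leq C\|\partial_x v\|_{L^\infty}\|v\|_{B^\sigma_{p,r}}$, so $V'(t)=\|\partial_x v(t)\|_{L^\infty}$ already suffices, bypassing the case split entirely.
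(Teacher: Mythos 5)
This lemma is stated in the paper's Appendix without proof; it is quoted verbatim from the linear transport theory of Bahouri--Chemin--Danchin (the paper's \Ref{BahCheDan}), so there is no in-paper argument to compare against. Your sketch reproduces exactly the canonical proof from that reference: localize with $\Delta_q$, reduce to a commutator $R_q=[v\,\partial_x,\Delta_q]f$, estimate it through Bony's decomposition so that the weighted $\ell^r$ norm of $(2^{q\sigma}\|R_q\|_{L^p})_q$ is controlled by $V'(t)\|f\|_{B^\sigma_{p,r}}$, and close with Gronwall. The structure, the origin of the case split in $V(t)$ at the critical index $\sigma=1+\tfrac{1}{p_1}$, and the special commutator bound for $f=v$ in part (iii) are all correctly identified. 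The one substantive caveat is that the commutator estimate is the entire technical content of the lemma and your proposal only gestures at it ("classical localization properties"); a self-contained proof would have to carry out the three paraproduct pieces and verify in each regime that the hypotheses on $\partial_x v$ suffice, including the summation over the remainder term where the restriction $\sigma\geq-\min(\tfrac{1}{p_1},1-\tfrac{1}{p})$ enters. Also, in part (ii) the logic for $r=\infty$ should be phrased as: uniform boundedness in $B^\sigma_{p,\infty}$ together with continuity in the weaker norms $B^{\sigma'}_{p,1}$ (via the embedding $B^\sigma_{p,\infty}\hookrightarrow B^{\sigma'}_{p,1}$ for $\sigma'<\sigma$) yields weak continuity in $B^\sigma_{p,\infty}$, rather than a density argument. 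These are gaps of detail, not of approach.
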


\begin{lemm}\label{Luowei}
\cite{LuoYin}
Let $1\leq p\leq\infty$ and $1\leq r \leq \infty$.
Assume $f_0\in B^{1+\frac{1}{p}}_{p,r}(\Rnum)$, $F\in L^1(0,T;B^{1+\frac{1}{p}}_{p,r}(\Rnum))$, and $v\in L^1(0,T;B^{2+\frac{1}{p}}_{p,r}(\Rnum))$
in the linear transport equation $(*)$.
If $f\in L^\infty(0,T;B^{1+\frac{1}{p}}_{p,r}(\Rnum)) $ solves $(*)$,
then
\begin{equation*}
\|f(t)\|_{B^{1+\frac{1}{p}}_{p,r}}\leq e^{CV(t)} (\|f_0\|_{B^{1+\frac{1}{p}}_{p,r}}\,+\, \int_0^t e^{-CV(\tau)} \|F(\tau)\|_{B^{1+\frac{1}{p}}_{p,r}}\d \tau),
\end{equation*}
with $V(t)=\int_0^t\|v\|_{B^{2+\frac{1}{p}}_{p,r}(\Rnum)}\d \tau$ and  $C=C(p,r)$.
\end{lemm}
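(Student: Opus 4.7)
The plan is to adapt the standard Littlewood--Paley energy argument used to prove Lemma~\ref{apriori-estimates} to the critical regularity index $\sigma=1+\tfrac{1}{p}$ with $r>1$, which is precisely the case not already covered there. First I would apply the nonhomogeneous dyadic block $\Delta_q$ to the equation $(*)$ and set $f_q := \Delta_q f$. Commuting $\Delta_q$ past $v\partial_x$ produces
\begin{equation*}
\partial_t f_q + v\,\partial_x f_q = \Delta_q F + R_q,
\qquad R_q := [v,\Delta_q]\partial_x f .
\end{equation*}
I would then multiply by $|f_q|^{p-2}f_q$, integrate over $\Rnum$, integrate by parts in the transport term, and apply H\"older to obtain
\begin{equation*}
\tfrac{d}{dt}\|f_q(t)\|_{L^p} \leq \tfrac{1}{p}\|\partial_x v(t)\|_{L^\infty}\|f_q(t)\|_{L^p} + \|R_q(t)\|_{L^p} + \|\Delta_q F(t)\|_{L^p} .
\end{equation*}
A Gronwall step on each dyadic piece bounds $\|f_q(t)\|_{L^p}$ in terms of $\|\Delta_q f_0\|_{L^p}$, $\|\Delta_q F\|_{L^p}$, and $\|R_q\|_{L^p}$, with an exponential multiplier $\exp\!\bigl(C\!\int_0^t\|\partial_x v(\tau)\|_{L^\infty}\,\d\tau\bigr)$.

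Next I would multiply by $2^{q(1+1/p)}$ and take the $\ell^r$ norm in $q$. The $f_0$ and $F$ terms assemble directly into the Besov norms on the right-hand side of the claimed inequality; what remains is to control the commutator contribution
\begin{equation*}
\bigl\|\bigl(2^{q(1+1/p)}\|R_q\|_{L^p}\bigr)_{q\geq -1}\bigr\|_{\ell^r}
\leq C\,\|v\|_{B^{2+1/p}_{p,r}}\,\|f\|_{B^{1+1/p}_{p,r}} .
\end{equation*}
To establish this I would expand $v\partial_x f$ via Bony's paraproduct decomposition as $T_v\partial_x f + T_{\partial_x f}v + R(v,\partial_x f)$ and localize each piece with $\Delta_q$, exploiting the quasi-orthogonality properties in Remark~\ref{rem:S.op}(ii). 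The first piece produces the genuine paraproduct commutator $[T_v,\Delta_q]\partial_x f$, which a first-order Taylor expansion of its convolution kernel bounds by $2^{-q}\|\partial_x v\|_{L^\infty}\|\Delta_{q'}\partial_x f\|_{L^p}$ for a few nearby $q'$; summation with the weight $2^{q(1+1/p)}$ yields a factor $\|\partial_x v\|_{L^\infty}\|f\|_{B^{1+1/p}_{p,r}}$. The remaining paraproduct and remainder pieces are handled by the Moser-type product rules of Lemma~\ref{Moser}, one derivative being absorbed into the stronger $B^{2+1/p}_{p,r}$ norm on $v$.

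Since $1+\tfrac{1}{p}>\tfrac{1}{p}$, the embedding in Lemma~\ref{lem:Besov.spaces}(iii) gives $\partial_x v\in B^{1+1/p}_{p,r}\hookrightarrow L^\infty$ with $\|\partial_x v\|_{L^\infty}\leq C\|v\|_{B^{2+1/p}_{p,r}}$; this both closes the commutator bound and shows that the exponent $V(t)=\int_0^t\|v(\tau)\|_{B^{2+1/p}_{p,r}}\,\d\tau$ dominates all factors $e^{CV}$ arising from the energy step. Combining these pieces and running Gronwall once more yields the claimed inequality. The hard part is the commutator bound: at the critical index $\sigma=1+\tfrac{1}{p}$ with $r>1$, the classical estimate pairing $\|\partial_x f\|_{L^\infty}$ with $\|v\|_{B^\sigma_{p,r}}$ fails because $B^{1/p}_{p,r}\not\hookrightarrow L^\infty$ when $r>1$. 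The whole point of strengthening the hypothesis from $\partial_x v\in L^1(0,T;B^{1/p}_{p,r})$ to $v\in L^1(0,T;B^{2+1/p}_{p,r})$ is to bypass this endpoint obstruction by shifting one derivative of regularity off $f$ and onto $v$ inside the commutator estimate.
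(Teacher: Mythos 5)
This lemma is quoted verbatim from \cite{LuoYin}; the paper supplies no proof of it, so there is no internal argument to compare against, and your outline must be judged against the cited source. It is essentially the same standard argument used there (and in Bahouri--Chemin--Danchin): dyadic localization, a per-block $L^p$ energy estimate with the $\tfrac1p\|\partial_x v\|_{L^\infty}$ term coming from integration by parts, Gronwall on each block, and then the key commutator bound
\[
\bigl\|\bigl(2^{q(1+1/p)}\|[v,\Delta_q]\partial_x f\|_{L^p}\bigr)_{q\geq-1}\bigr\|_{l^r}
\leq C\,\|v\|_{B^{2+1/p}_{p,r}}\|f\|_{B^{1+1/p}_{p,r}},
\]
in which the extra derivative demanded of $v$ compensates for the failure of $B^{1/p}_{p,r}\hookrightarrow L^\infty$ when $r>1$ --- your diagnosis of that endpoint obstruction is exactly right. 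The commutator bound itself closes the way you sketch: the genuine paraproduct commutator $[T_v,\Delta_q]\partial_x f$ costs only $\|\partial_x v\|_{L^\infty}\lesssim\|v\|_{B^{2+1/p}_{p,r}}$ (using $B^{1+1/p}_{p,r}\hookrightarrow L^\infty$, valid since $1+\tfrac1p>\tfrac1p$), while the $T_{\partial_x f}v$ and remainder pieces are controlled by shifting one full derivative onto $v$, e.g.\ $\|T_{\partial_x f}v\|_{B^{1+1/p}_{p,r}}\lesssim\|\partial_x f\|_{B^{-1}_{\infty,\infty}}\|v\|_{B^{2+1/p}_{p,r}}$ together with $\partial_x f\in B^{1/p}_{p,r}\hookrightarrow B^{0}_{\infty,\infty}\hookrightarrow B^{-1}_{\infty,\infty}$. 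The only loose end is the case $p=\infty$ (permitted by the hypotheses), where the multiplier $|f_q|^{p-2}f_q$ is meaningless; there one obtains the block estimate instead by transporting $\Delta_q f$ along the flow of $v$, or by duality --- a routine modification that does not affect the rest of the argument.
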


\begin{lemm}\label{LiYin2}
\cite{LiYin2}
If $\sigma>0$, then there exists a constant $C = C(p,r,\sigma)$ such that
\begin{equation*}
\|f(t)\|_{B^\sigma_{p,r}}\leq \|f_0\|_{B^\sigma_{p,r}}
+\int_0^t \|F(\tau)\|_{B^\sigma_{p,r}}\d \tau+C\int_0^t \big(\|f(\tau)\|_{B^\sigma_{p,r}}\|v_x\|_{L^\infty}+
\|f(\tau)\|_{L^\infty}\|v_x\|_{B^\sigma_{p,r}}\big)\d\tau.
\end{equation*}
\end{lemm}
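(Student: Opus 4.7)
The plan is to localize in frequency via the Littlewood-Paley operators and derive the estimate from an $L^p$ energy identity for each dyadic block, combined with a commutator estimate of Moser type. All norms/arguments are in the $r<\infty$ case; the $r=\infty$ case follows by the usual Fatou-type passage.

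First, I would apply $\Delta_q$ to the transport equation $(*)$ to obtain
\begin{equation*}
\partial_t\Delta_q f + v\,\partial_x\Delta_q f = \Delta_q F + R_q,
\qquad
R_q := [v\,\partial_x,\,\Delta_q]f = v\,\partial_x\Delta_q f - \Delta_q(v\,\partial_x f).
\end{equation*}
Multiplying by $|\Delta_q f|^{p-2}\Delta_q f$, integrating in $x$, and using the integration-by-parts identity $\int v\,\partial_x(\Delta_q f)\,|\Delta_q f|^{p-2}\Delta_q f\,\d x = -\tfrac1p\int \partial_x v\,|\Delta_q f|^p\,\d x$ gives the pointwise-in-time block inequality
\begin{equation*}
\frac{\d}{\d t}\|\Delta_q f\|_{L^p}
\leq \|\Delta_q F\|_{L^p} + \|R_q\|_{L^p}
+ \tfrac{1}{p}\|\partial_x v\|_{L^\infty}\|\Delta_q f\|_{L^p}.
\end{equation*}

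The heart of the proof is the commutator bound
\begin{equation*}
2^{q\sigma}\|R_q\|_{L^p}
\leq C\,c_q\bigl(\|v_x\|_{L^\infty}\|f\|_{B^\sigma_{p,r}} + \|f\|_{L^\infty}\|v_x\|_{B^\sigma_{p,r}}\bigr),
\qquad \|(c_q)_{q\geq -1}\|_{\ell^r}=1,
\end{equation*}
valid for any $\sigma>0$; this is where the Moser form of the right-hand side of the lemma actually comes from. To establish it, I would invoke Bony's paradifferential decomposition
\begin{equation*}
v\,\partial_x f = T_v\,\partial_x f + T_{\partial_x f}\,v + R(v,\partial_x f),
\end{equation*}
so that $R_q$ splits into three commutators. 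The low-high piece $[T_v,\Delta_q]\partial_x f$ is handled via a first-order Taylor expansion of $\Delta_q$ (written as convolution against $2^q\check\varphi(2^q\cdot)$), which produces exactly the factor $\|v_x\|_{L^\infty}$ paired with $\|f\|_{B^\sigma_{p,r}}$. The high-low piece and the remainder are estimated using the spectral-support identities in Remark~\ref{rem:S.op}(ii) and the product laws of Lemma~\ref{Moser}, yielding the factor $\|f\|_{L^\infty}\|v_x\|_{B^\sigma_{p,r}}$. The restriction $\sigma>0$ enters precisely in bounding the remainder $R(v,\partial_x f)$, which requires strict positivity of the regularity index.

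Finally, I would integrate the dyadic inequality in time, multiply by $2^{q\sigma}$, take the $\ell^r_q$ norm, and use Minkowski's inequality to commute it with the time integral; combined with the commutator bound this gives
\begin{equation*}
\|f(t)\|_{B^\sigma_{p,r}}
\leq \|f_0\|_{B^\sigma_{p,r}}
+ \int_0^t\|F(\tau)\|_{B^\sigma_{p,r}}\,\d\tau
+ C\int_0^t\bigl(\|f(\tau)\|_{B^\sigma_{p,r}}\|v_x(\tau)\|_{L^\infty}+\|f(\tau)\|_{L^\infty}\|v_x(\tau)\|_{B^\sigma_{p,r}}\bigr)\,\d\tau,
\end{equation*}
which is the claimed estimate (the $\tfrac1p\|v_x\|_{L^\infty}\|\Delta_q f\|_{L^p}$ term from the energy step is absorbed into the $C$). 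The main obstacle is clearly the commutator estimate: every other step is a routine $L^p$-$\ell^r$ manipulation, but the Moser-shaped bound on $R_q$ requires a careful three-piece Bony analysis with dyadic support bookkeeping.
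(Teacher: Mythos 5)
The paper does not actually prove this lemma: it is imported verbatim from \cite{LiYin2}, so there is no internal argument to compare against. Your reconstruction is the standard proof of exactly this estimate---dyadic localization, an $L^p$ energy inequality on each block $\Delta_q f$ exploiting the cancellation $\int v\,\partial_x(\Delta_q f)\,|\Delta_q f|^{p-2}\Delta_q f\,\d x=-\tfrac1p\int v_x|\Delta_q f|^p\,\d x$, and a Bony-decomposition commutator bound with the Moser-shaped right-hand side---and this is the same route taken in the cited source and in Bahouri--Chemin--Danchin. Your identification of where $\sigma>0$ enters (the remainder, i.e.\ the $\ell^r$-convolution $\sum_{j\geq q-N}2^{(q-j)\sigma}c_j$) is also correct. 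Two points, however, need more care than the sketch gives them. First, the multiplier $|\Delta_q f|^{p-2}\Delta_q f$ only makes sense for $p<\infty$, while the lemma is invoked in the paper with $1\leq p\leq\infty$; the endpoint $p=\infty$ should instead be handled by integrating $\Delta_q f$ along the flow of $v$ (for $L^\infty$ no Jacobian factor is even needed), or by a limiting argument in $p$. Second, in the commutator bound the low-frequency block $\Delta_{-1}v$ cannot be estimated through $\|v_x\|_{L^\infty}$ or $\|v_x\|_{B^\sigma_{p,r}}$ by reverse Bernstein (that inequality fails on the block whose spectrum contains the origin), so the pieces of the Bony splitting containing $\Delta_{-1}v$ must be treated separately: one uses that $[\Delta_{-1}v,\Delta_q]\partial_x f$ only involves frequencies of $f$ in an annulus of size $\sim 2^q$, and then the first-order Taylor trick applies with $\|\partial_x\Delta_{-1}v\|_{L^\infty}\lesssim\|v_x\|_{L^\infty}$; without this observation a spurious $\|v\|_{L^\infty}$ appears on the right-hand side, which the stated estimate does not allow. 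With these two standard repairs your argument is complete and matches the proof in the cited literature.
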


\begin{lemm}\label{regularity}
(Existence and uniqueness)
Let $p,r,\sigma,f_0$ and $F$ be as in the statement of Lemmas \ref{apriori-estimates}--\ref{Luowei}.
Assume that $v\in L^k(0,T; B^{-M}_{\infty,\infty})$ for some $k>1$ and $M>0$,
and that
\begin{align*}
\partial_x v \in \begin{cases}
L^1(0,T; {B^{\sigma-1}_{p,r}}),
& \sigma> 1+\tfrac{1}{p}, r\neq 1, \text{ or } \sigma=1+\tfrac{1}{p}, r=1
\\
L^1(0,T; B^{\frac{1}{p}}_{p,\infty}\cap L^\infty),
& \sigma<1+{\frac{1}{p}}
\end{cases}
\end{align*}
and $v\in L^1(0,T;B^{\sigma+1}_{p,r}(\Rnum))$ if $\sigma= 1+{\frac{1}{p}}$, $r>1$.
Then $(*)$ has a unique solution
\begin{align*}
f\in L^{\infty}(0,T; B^\sigma_{p,r})\bigcap\,\big(\bigcap\limits_{\sigma'<\sigma} \C([0,T]; B^{\sigma'}_{p,1})\big)
\end{align*}
and the inequalities of Lemmas~\ref{apriori-estimates}--\ref{Luowei} hold.
Moreover, if $r<\infty$, then $f\in \C([0,T]; B^\sigma_{p,r})$.
\end{lemm}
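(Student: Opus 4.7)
The plan is to construct the solution by a Friedrichs-type regularization, extract a limit via the uniform a priori bounds of Lemmas~\ref{apriori-estimates}--\ref{Luowei}, and upgrade that limit to a strong solution. Uniqueness then follows from the same a priori machinery applied at one degree less regularity, and the time continuity is read off the equation itself.

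First, I would smooth everything via the low frequency cut-off: set $v^n = S_n v$, $f^n_0 = S_n f_0$, and $F^n = S_n F$. By Remark~\ref{rem:S.op} these are uniformly bounded in the original Besov spaces and converge to $v, f_0, F$ in those spaces; moreover they lie in every $B^\tau_{p,r}$. Since $v^n$ is now smooth and Lipschitz in $x$ uniformly on $[0,T]$, classical ODE/characteristics theory gives a unique smooth solution $f^n \in \C^1([0,T];\C^\infty)$ of the linear transport problem with coefficient $v^n$, source $F^n$, and data $f_0^n$.

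Second, I would apply Lemma~\ref{apriori-estimates} (or Lemma~\ref{Luowei} in the critical sub-case $\sigma=1+1/p$ with $r>1$) to each $f^n$. Together with the continuity of $S_n$ on Besov spaces, this gives a bound independent of $n$,
\begin{equation*}
\|f^n\|_{L^\infty_T(B^\sigma_{p,r})} \leq e^{CV(T)}\Bigl(\|f_0\|_{B^\sigma_{p,r}} + \|F\|_{L^1_T(B^\sigma_{p,r})}\Bigr) .
\end{equation*}
Next, for $m>n$ the difference $g^{n,m}:=f^m-f^n$ solves $\partial_t g^{n,m} + v^m\partial_x g^{n,m} = (v^n-v^m)\partial_x f^n + (F^m-F^n)$ with data $f_0^m-f_0^n$. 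Applying the a priori estimate at regularity $\sigma-1$, and controlling $(v^m-v^n)\partial_x f^n$ by the Moser-type product law of Lemma~\ref{Moser}, one sees that $\{f^n\}$ is Cauchy in $\C([0,T];B^{\sigma-1}_{p,r})$. Let $f$ be the limit.

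Third, the Fatou property in Besov spaces (Lemma~\ref{lem:Besov.spaces}(vii)) promotes $f$ to $L^\infty_T(B^\sigma_{p,r})$ with the same bound. Passing to the limit in the distributional formulation of $(*)$, which is legitimate thanks to the strong convergence of the coefficients and data and the weak-$*$ convergence of $f^n$, shows that $f$ solves $(*)$. Uniqueness is immediate: applying Lemma~\ref{apriori-estimates} at level $\sigma-1$ to the difference of any two solutions with the same data yields that the difference vanishes. For the regularity in time, I read $\partial_t f = F - v\,\partial_x f$ off the equation, estimate the right-hand side in $B^{\sigma-1}_{p,r}$ using the Moser product inequality and the hypotheses on $v$ and $F$, and conclude $f\in \C([0,T];B^{\sigma-1}_{p,r})$. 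Interpolation (Lemma~\ref{lem:Besov.spaces}(v)) between this continuity and the uniform $B^\sigma_{p,r}$ bound produces continuity into $B^{\sigma'}_{p,1}$ for every $\sigma'<\sigma$. When $r<\infty$, density of smooth data together with the a priori estimate upgrades this to $f\in \C([0,T];B^\sigma_{p,r})$.

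The main obstacle is the critical endpoint $\sigma=1+1/p$, $r>1$. There $\partial_x v\in B^{\sigma-1}_{p,r} = B^{1/p}_{p,r}$ does not embed in $L^\infty$, so the commutator estimate underlying Lemma~\ref{apriori-estimates} breaks; one must instead invoke Lemma~\ref{Luowei}, which demands control of the full coefficient $v$ in $B^{\sigma+1}_{p,r}$. Keeping the regularization scheme compatible with this stronger hypothesis and checking that the Cauchy argument for $\{f^n\}$ in $B^{\sigma-1}_{p,r}$ still closes in this case, via a careful use of the logarithmic interpolation inequality~\eqref{interpolation2}, is the delicate bookkeeping step.
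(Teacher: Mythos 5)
This lemma is not proved in the paper: it is stated in the Appendix as a quoted background result from linear transport theory (the reference given is Bahouri--Chemin--Danchin, with the critical-index refinement from \cite{LuoYin}), so there is no in-paper argument to compare against. Your sketch reconstructs the standard textbook proof -- Friedrichs regularization by $S_n$, uniform bounds from Lemma~\ref{apriori-estimates}/\ref{Luowei}, passage to the limit, uniqueness from the a priori estimate -- and the overall strategy is the right one. Two steps, however, have genuine gaps as written. First, your convergence argument estimates the difference $f^m-f^n$ at regularity $\sigma-1$, but Lemma~\ref{apriori-estimates} is only available for indices $\geq -\min(\tfrac{1}{p_1},1-\tfrac{1}{p})$, and the hypotheses allow $\sigma$ to sit so close to that threshold that $\sigma-1$ falls below it (e.g.\ $p=2$, $\sigma$ slightly above $-\tfrac12$). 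The standard proof avoids this by replacing the norm-Cauchy argument with compactness: the uniform $L^\infty_T(B^\sigma_{p,r})$ bound plus the equation gives equicontinuity in time with values in a sufficiently negative-index space, and Ascoli together with Fatou's lemma (Lemma~\ref{lem:Besov.spaces}(vii)) produces the limit. Second, when $r=\infty$ the operators $S_n$ do not converge strongly to the identity on $B^s_{p,\infty}$, so the ``strong convergence of the coefficients and data'' you invoke to pass to the limit in the weak formulation is not available; one must again argue by weak-$*$ compactness, which is why only weak time-continuity survives at $r=\infty$.

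A smaller point: for uniqueness you do not need to drop to $\sigma-1$ at all (and doing so reintroduces the threshold problem). The difference of two solutions in $L^\infty_T(B^\sigma_{p,r})\cap \C([0,T];\mathcal S')$ solves the homogeneous transport equation with zero data, so applying the estimate \eqref{apriori} at level $\sigma$ itself gives vanishing directly. Your treatment of the critical case $\sigma=1+\tfrac1p$, $r>1$ via Lemma~\ref{Luowei}, and the final upgrade to $\C([0,T];B^\sigma_{p,r})$ for $r<\infty$ by density of smooth data, are both in line with how the cited literature handles those endpoints.
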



\begin{thebibliography}{99}

\bibitem{AncMob}
S.~C.~Anco and F.~Mobasheramini.
\newblock Integrable {$U(1)$}-invariant peakon equations from the {NLS} hierarchy.
\newblock {\em Physica D} 355:1--23, 2017.

\bibitem{AncRec2019}
S.~C.~Anco and E.~Recio.
\newblock A general family of multi-peakon equations and their properties.
\newblock {\em J. Phys. A: Math. Theor.} 52: 125203 (36 pages), 2019.

\bibitem{AncChaSzm}
S.~C. Anco, X. Chang,  and J. Szmigielski.
\newblock The dynamics of conservative peakons in the NLS hierarchy.
\newblock {\em Studies in Applied Math} 141(4):1--34, 2018.

\bibitem{BahCheDan}
H.~Bahouri, J.-Y. Chemin, and R.~Danchin.
\newblock Fourier Analysis and Nonlinear Partial Differential Equations.
\newblock 2011.

\bibitem{BouKosSheTes}
A.~Boutet~de Monvel, A.~Kostenko, D.~Shepelsky, and G.~Teschl.
\newblock Long-time asymptotics for the Camassa-Holm equation.
\newblock {\em SIAM Journal on Mathematical Analysis} 41(4):1559--1588, 2009.

\bibitem{CamHol}
R.~Camassa and D.~Holm.
\newblock An integrable shallow water equation with peaked solitons.
\newblock {\em Phys. Rev. Lett.} 71:1661--1664, 1993.

\bibitem{CheLiuQuZha}
R.~M.~Chen, Y.~Liu, C.~Qu, S.~Zhang,
\newblock Oscillation-induced blow-up to the modified Camassa-Holm equation with linear dispersion.
\newblock {\em Adv. Math.} 272:225--251, 2015.

\bibitem{Con2000a}
A.~Constantin.
\newblock Existence of permanent and breaking waves for a shallow water equation: a geometric approach.
\newblock {\em Annales de l'institut Fourier} 50(2):321--362, 2000.

\bibitem{Con2000b}
A.~Constantin.
\newblock On the blow-up of solutions of a periodic shallow water equation.
\newblock {\em J. Nonlin. Sci.} 10(3):391--399, 2000.

\bibitem{ConEsc1998a}
A.~Constantin and J.~Escher.
\newblock Global existence and blow-up for a shallow water equation.
\newblock {\em Annali della Scuola normale superiore di Pisa. Classe di scienze} 26(2):303--328, 1998.

\bibitem{ConEsc1998b}
A.~Constantin and J.~Escher.
\newblock Wave breaking for nonlinear nonlocal shallow water equations.
\newblock {\em Acta Mathematica} 181(2):229--243, 1998.

\bibitem{ConEsc1998c}
A.~Constantin and J.~Escher.
\newblock  Well-posedness, global existence, and blowup phenomena for a periodic quasi-linear hyperbolic equation.
{\em Comm. Pure Appl. Math.} 51(5): 475--504, 1998.

\bibitem{ConEsc2000}
A.~Constantin and J.~Escher.
\newblock On the blow-up rate and the blow-up set of breaking waves for a shallow water equation.
\newblock {\em Mathematische Zeitschrift} 233(1):75--91, 2000.

\bibitem{ConGerIva2006}
A.~Constantin, V.~Gerdjikov, and R.~Ivanov.
\newblock Inverse scattering transform for the Camassa--Holm equation.
\newblock {\em Inverse problems} 22(6):2197, 2006.

\bibitem{ConGerIva2007}
A.~Constantin, V.~Gerdjikov, and R.~Ivanov.
\newblock Generalized Fourier transform for the Camassa--Holm hierarchy.
\newblock {\em Inverse Problems} 23(4):1565, 2007.

\bibitem{ConKol2003}
A.~Constantin and B.~Kolev.
\newblock Geodesic flow on the diffeomorphism group of the circle.
\newblock {\em Commentarii Mathematici Helvetici} 78(4):787--804, 2003.

\bibitem{ConKol2006}
A.~Constantin and B.~Kolev.
\newblock Integrability of invariant metrics on the diffeomorphism group of the circle.
\newblock {\em J. Nonlin. Sci.} 16(2):109--122, 2006.

\bibitem{ConMcK}
A.~Constantin and H.~McKean.
\newblock A shallow water equation on the circle.
\newblock {\em Commun. Pure and Applied Math.} 52(8):949--982, 1999.

\bibitem{ConStr}
A.~Constantin and W.~Strauss.
\newblock Stability of peakons.
\newblock {\em Commun. Pure and Applied Math.} 53(5):603--610, 2000.

\bibitem{Eck}
J.~Eckhardt.
\newblock The inverse spectral transform for the conservative Camassa-Holm flow with decaying initial data.
\newblock {\em Archive Rat. Mech. Analysis} 224(1):21--52, 2017.

\bibitem{DikMol}
K.~El~Dika and L.~Molinet.
\newblock Stability of multipeakons.
\newblock {\em Annales de l'Institut Henri Poincar\'{e} C, Analyse non lin{\'e}aire} 26(4):1517--1532, 2009.

\bibitem{Fok1995}
A.~Fokas.
\newblock On a class of physically important integrable equations.
\newblock {\em Physica D} 87:145--150, 1995.

\bibitem{Fuch1996}
B.~Fuchssteiner.
\newblock Some tricks from the symmetry-toolbox for nonlinear equations: Generalizations of the {Camassa-Holm} equation.
\newblock {\em Physica D} 95(3--4):229--243, 1996.

\bibitem{FuchFok}
B.~Fuchssteiner and A.~Fokas.
\newblock Symplectic structures, their B\"acklund transformations and hereditary symmetries.
\newblock {\em Physica D} 4:47--66, 1981.

\bibitem{GuiLiuOlvQu}
G.~Gui, Y.~Liu, P.~Olver, and C.~Qu.
\newblock Wave-breaking and peakons for a modified Camassa-Holm equation.
\newblock {\em Comm. Math. Phys.} 319:731--759, 2013.

\bibitem{HimMan}
A.~A. Himonas and D.~Mantzavinos.
\newblock H\"older continuity for the Fokas-Olver-Rosenau-Qiao equation.
\newblock {\em J. Nonlin. Sci.} 24:1105--1124, 2014.

\bibitem{Joh2002}
R.~Johnson.
\newblock Camassa-Holm, Korteweg-de Vries and related models for water waves.
\newblock {\em J. Fluid Mech.} 455:63--82, 2002.

\bibitem{Kol}
B.~Kolev.
\newblock Bi-Hamiltonian systems on the dual of the Lie algebra of vector fields of the circle and periodic shallow water equations.
\newblock {\em Phil. Trans. Royal Society A} 365(1858):2333--2357, 2007.

\bibitem{Kou}
S.~Kouranbaeva.
\newblock The Camassa--Holm equation as a geodesic flow on the diffeomorphism group.
\newblock {\em J. Math. Phys.} 40(2):857--868, 1999.

\bibitem{Len}
J.~Lenells.
\newblock The scattering approach for the {Camassa--Holm} equation.
\newblock {\em J. Nonlin. Math. Phys.} 9(4):389--393, 2002.

\bibitem{LiY}
J.~Li and Z.~Yin.
\newblock Well-posedness and global existence for a generalized Degasperis-Procesi equation.
\newblock {\em Nonlinear Anal. Real World Appl.} 28:72--90, 2016.

\bibitem{LiYin2}
J. Li and Z. Yin.
\newblock Well-posedness and analytic solutions of the two-component {Euler-Poincar\'e} system.
\newblock{\em Monatsh Math.}, 1--29, 2016.

\bibitem{LiOlv}
Y.~Li and P.~Olver.
\newblock Well-posedness and blow-up solutions for an integrable nonlinearly dispersive model wave equation.
\newblock {\em J. Differential Eqns.} 162(1):27--63, 2000.

\bibitem{LuoYin}
W.~Luo and Z.~Yin.
\newblock Local well-posedness and blow-up criteria for a two-component Novikov system in the critical besov space.
\newblock {\em Nonlinear Anal.} 122:1--22, 2015.

\bibitem{OlvRos}
P.~J. Olver and P.~Rosenau.
\newblock Tri-Hamiltonian duality between solitons and solitary-wave solutions having compact support.
\newblock {\em Phys. Rev. E} 53:1900--1906, 1996.

\bibitem{Qia2003}
Z.~Qiao.
\newblock The Camassa-Holm hierarchy, $N$-dimensional integrable systems, and algebro-geometric solution on a symplectic submanifold.
\newblock {\em Commun. Math. Phys.} 239(8):309, 2003.

\bibitem{Qia2006}
Z.~Qiao.
\newblock A new integrable equation with cuspons and w/m-shape peaks solitons.
\newblock {\em J. Math. Phys.} 47:112701, 2006.

\bibitem{QuHanKan}
C. Qu, J. Han, J. Kang,
\newblock B\"acklund transformations for integrable geometric curve flows.
\newblock {\em Symmetry} 7:1376--1394, 2015.

\bibitem{XiaQiaZho}
B.~Xia, Z.~Qiao, and R.~Zhou.
\newblock A synthetical two-component model with peakon solutions.
\newblock {\em Studies in Applied Math.} 135:248--276, 2015.

\bibitem{XiaQia}
B.~Xia and Z.~Qiao.
\newblock A new two-component integrable system with peakon solutions.
\newblock {\em Proc. Roy. Soc. A} 471: 2175, 2015.

\bibitem{XinZha}
Z. ~Xin and P. ~Zhang.
\newblock On the weak solutions to a shallow water equation.
\newblock {\em Comm. Pure Appl. Math.} 53(11): 1411--1433, 2000.



\end{thebibliography}
\end{document}